\title[The extension class and KMS states]{The extension class and KMS states for Cuntz--Pimsner algebras of some bi-Hilbertian bimodules}
\author{Adam Rennie}
\author{David Robertson}
\author{Aidan Sims}
\email{renniea@uow.edu.au, dave84robertson@gmail.com, asims@uow.edu.au}
\address{School of Mathematics and Applied Statistics,
University of Wollongong\\
Northfields Ave 2522, Australia}
\keywords{Kasparov module; extension; Cuntz--Pimsner algebra; KMS state}
\thanks{This research was supported by the Australian Research Council.}
\subjclass[2010]{19K35, 46L08, 46L30}
\numberwithin{equation}{section} %% needs `amsmath' package
\theoremstyle{plain} %% needs `amsmath' package
\newtheorem{thm}{Theorem}[section]
\newtheorem{lemma}[thm]{Lemma}
\newtheorem{prop}[thm]{Proposition}
\newtheorem{corl}[thm]{Corollary}
\theoremstyle{definition} %% needs `amsmath' package
\newtheorem{defn}[thm]{Definition}
\theoremstyle{remark} %% needs `amsmath' package
\newtheorem{rmk}[thm]{Remark}
\newtheorem{exm}[thm]{Example}
\DeclareMathOperator{\End}{End}   %% endomorphism algebra
\DeclareMathOperator{\Id}{Id}     %% identity map
\DeclareMathOperator{\res}{res}   %%%residue
\DeclareMathOperator{\supp}{supp} %% support
\DeclareMathOperator{\Tr}{Tr}     %% operator trace
\DeclareMathOperator{\opsp}{span}
\newcommand{\op}{{\operatorname{op}}}       %% opposite-algebra marker
\newcommand{\extcls}{[\operatorname{ext}]}
\newcommand{\Aut}{\operatorname{Aut}}
\newcommand{\lsp}{\operatorname{span}}
\newcommand{\clsp}{\operatorname{\mbox{$\overline{\lsp}$}}}
\newcommand{\sr}{r_\sigma}
\newcommand{\B}{\mathcal{B}}  %% another algebra
\newcommand{\C}{\mathbb{C}}   %% complex numbers
\newcommand{\D}{\mathcal{D}}  %% a selfadjoint operator
\renewcommand{\H}{\mathcal{H}}  %% a Hilbert space
\newcommand{\N}{\mathbb{N}}   %% natural numbers
\newcommand{\ox}{\otimes}     %% tensor product
\newcommand{\Oo}{\mathcal{O}}  %%% Cuntz-Pimsner algebra
\newcommand{\R}{\mathbb{R}}   %% real numbers
\newcommand{\Tt}{\mathcal{T}} %%Toeplitz
\newcommand{\T}{\mathbb{T}} %%Toeplitz
\newcommand{\Z}{\mathbb{Z}}   %% integers
\def\pairL_#1(#2|#3){\mbox{${}_{#1}({#2}\mid{#3})$}} %% hermitian pairing _B(s|t)
\def\pairR(#1|#2)_#3{\mbox{$({#1}\mid{#2})_{#3}$}} %% hermitian pairing (s|t)_A
\def\scal<#1|#2>{\langle#1\mid#2\rangle} %% scalar product <y|z>
\newbox\ncintdbox \newbox\ncinttbox %% noncommutative integral symbols
\begin{document}

\begin{abstract}
For bi-Hilbertian $A$-bimodules, in the sense of Kajiwara--Pinzari--Watatani, we
construct a Kasparov module representing the extension class defining the Cuntz--Pimsner
algebra. The construction utilises a singular expectation which is defined using the
$C^*$-module version of the Jones index for bi-Hilbertian bimodules. The Jones
index data also determines a novel quasi-free
dynamics and KMS states on these Cuntz--Pimsner
algebras.
\end{abstract}

\maketitle

%\tableofcontents
%
%
%\parskip=6pt
%\parindent=0pt

%\addtocontents{toc}{\vspace{-1pc}}

\section{Introduction}
The Cuntz--Pimsner algebras introduced in \cite{Pimsner} have attracted enormous
attention over the last fifteen years (see, for example, \cite{AbadieAchigar:RMJM2009,
Carlsen:IJM08, CarlsenOrtega:PLMS2011, DykemaShlyakhtenko:PEMS2001, DykemaSmith:HJM2005,
FowlerMuhlyRaeburn:IUMJ2003, KajPinWat, KaliszewskiQuiggEtAl:JMAA2013, Kumjian:PJM2004,
KwasniewskiLebedev:JFA2013, LacaRaeburnEtAl:JFA2014, LledoVasselli:MN2010,
MuhlySolel:MS98, Schweizer:JFA2001}). They are at once quite tractable and very general,
including models for crossed products and Cuntz--Krieger algebras \cite{Pimsner}, graph
$C^*$-algebras \cite{FowlerLacaEtAl:PAMS00}, topological-graph $C^*$-algebras
\cite{Katsura:TAMS04}, Exel crossed products \cite{BrownloweRaeburn:MPCPS06},
$C^*$-algebras of self-similar actions \cite{Nekrashevych:JOT04} and many others.

Particularly important in the theory of Cuntz--Pimsner algebras is the natural Toeplitz
extension $0 \to \End_A^0(F_E) \to \Tt_E \to \Oo_E \to 0$ of a Cuntz--Pimsner algebra by
the compact endomorphisms of the associated Fock module. For example, Pimsner uses this
extension in \cite{Pimsner} to calculate the $K$-theory of a Cuntz--Pimsner algebra using
that $\End_A^0(F_E)$ is Morita equivalent to $A$ and $\Tt_E$ is $KK$-equivalent to $A$.
It follows that the class of this extension is important in $K$-theory calculations, and
a concrete Kasparov module representing this class could be useful, for example, in
exhibiting Poincar\'e duality for appropriate classes of Cuntz--Pimsner algebras.

When $E$ is an imprimitivity bimodule, this is relatively straightforward (see
Section~\ref{subsec:smeb}) because the Fock representation of $\Tt_E$ is the compression
of a natural representation of $\Oo_E$ on a 2-sided Fock module. But for the general
situation, there is no such 2-sided module. Pimsner sidesteps this issue in
\cite{Pimsner} by replacing the coefficient algebra $A$ with the direct limit $A_\infty$
of the algebras of compact endomorphisms on tensor powers of $E$, and $E$ with the direct
limit $E_\infty$ of the modules of compact endomorphisms from $E^{\otimes n}$ to
$E^{\otimes n+1}$. This is an excellent tool for computing the $K$-theory of $\Oo_E$: the
module $E_\infty$ (rather than $E$ itself) induces the Pimsner--Voiculescu sequence in
$K$-theory, and the Cuntz--Pimsner algebra of $E_\infty$ is isomorphic to that of $E$.
But at the level of $KK$-theory, replacing $E$ with $E_\infty$ changes things
dramatically. The Toeplitz extension associated to $E_\infty$ corresponds to an element
of $KK^1(\Oo_E, A_\infty)$, rather than of $KK^1(\Oo_E, A)$, and the two are quite
different: for example, if $E$ is the 2-dimensional Hilbert space, then $A = \C$, whereas
$A_\infty = M_{2^\infty}(\C)$, the type-$2^\infty$ UHF algebra.

In this paper we consider the situation where $E$ is a finitely generated bi-Hilbertian
bimodule, in the sense of Kajiwara--Pinzari--Watatani \cite{KajPinWat}, over a unital
$C^*$-algebra. Our main result is a construction of a Kasparov-module representative of
the class in $KK^1(\Oo_E, A)$ corresponding to the extension of $\Oo_E$ by $\End_A^0(F_E)$.
We assume our modules $E$ are both full and injective. This situation is quite common,
and we present a range of examples; but much that we do could be extended to more general
finite-index bi-Hilbertian bimodules, \cite{KajPinWat}.

After introducing some basic structural features of the modules we consider in Section
\ref{sec:bimods}, we give a range of examples. We then examine the important special case
of self-Morita equivalence bimodules (SMEBs), which include crossed products by $\Z$.
This case was first calculated by Pimsner \cite{Pimsner} in order to show that $A$ and
$\Tt_E$ are $KK$-equivalent. We present the details here for completeness.

For SMEBs we can produce an unbounded representative of the extension
$$
0\to \End^0_A(F_E)\to \Tt_E\to \Oo_E\to 0
$$
defining $\Oo_E$. Here $E$ is our correspondence, $F_E$ the (positive) Fock space, and
$\Tt_E,\,\Oo_E$ are the Toeplitz--Pimsner and Cuntz--Pimsner algebras, respectively.

Having an unbounded representative can simplify the task of computing Kasparov
products. Since products with the class of this extension define boundary maps
in $K$-theory and $K$-homology exact sequences, this representative is a useful aid
to computing $K$-theory via the Pimsner--Voiculescu exact sequence. An application of
this technique to the quantum Hall effect appears in \cite{BCR}.

For the general case of (finitely generated) bi-Hilbertian bimodules, we do not obtain an
unbounded representative, but the construction of the right $A$-module underlying the
Kasparov module is novel. Using the bimodule structure, we construct a one-parameter
family $\Phi_s:\Tt_E\to A$, $\Re(s)>1$, of positive $A$-bilinear maps. Provided the
residue at $s=1$ exists, we obtain an expectation $\Phi_\infty=\res_{s=1}\Phi_s:\Tt_E\to
A$, which vanishes on the covariance ideal, and so descends to $\Oo_E$. We  use
$\Phi_\infty$ to construct an $A$-valued inner product on $\Oo_E$, and thereby obtain the
underlying $C^*$-module in our $\Oo_E$--$A$-Kasparov module representing the extension
class. We provide a criterion for establishing the existence of the desired residue in
Proposition~\ref{prop:Phi-infty}. We show that this criterion is readily checkable in
some key examples; in particular, we show in Example~\ref{exm:primgraph} that the residue
exists when $E$ is the bimodule associated to a finite primitive directed graph.

The bimodule structure and Jones--Watatani index are essential ingredients in the
construction of $\Phi_\infty$. The (right) Jones--Watatani index also provides a natural
and interesting one-parameter family of quasi-free automorphisms of $\Oo_E$, and we show
that there is a natural family of KMS states on $\Oo_E$ parameterised by
the states on $A$ which are invariant for the dynamics encoded by $E$. This construction
combines ideas from \cite{LacaNeshveyev} and \cite{CNNR}.

There are also corresponding
dynamics arising from the left Jones--Watatani index, and the product of the left and
right indices. The corresponding collections of KMS states would also be interesting, but
we do not address them here. The key point is that many important Cuntz--Pimsner algebras
arise from bi-Hilbertian modules, and this extra structure gives rise to new tools that are worthy of study.

{\bf Acknowledgements.} This work has profited from discussions with Bram Mesland and
Magnus Goffeng. The authors also wish to thank the anonymous referee for several suggestions
which have greatly improved the exposition.

\section{A class of bimodules}
\label{sec:bimods}

Throughout this paper, $A$ will denote a separable, unital, nuclear $C^*$-algebra. Given
a right Hilbert $A$-module $E$ (written $E_A$ when we want to remember the coefficient
algebra), we denote the $C^*$-algebra of adjointable operators by $\End_A(E)$, the
compact endomorphisms by $\End^0_A(E)$ and the finite-rank endomorphisms by
$\End_A^{00}(E)$. The finite-rank endomorphisms are generated
by rank one operators $\Theta_{e,f}$ with $e,\,f\in E$.

\begin{defn}
\label{defn:bimod} Let $A$ be a unital $C^*$-algebra. Following \cite{KajPinWat}, a
bi-Hilbertian $A$-bimodule is a full right $C^*$-$A$-module with inner product
$\pairR(\cdot|\cdot)_A$ which is also a full left Hilbert $A$-module with inner product
$\pairL_A(\cdot|\cdot)$ such that the left action of $A$ is adjointable with respect to
$\pairR(\cdot|\cdot)_A$ and the right action of $A$ is adjointable with respect to
$\pairL_A(\cdot|\cdot)$.
\end{defn}

If $E$ is a bi-Hilbertian $A$-bimodule, then there are two Banach-space norms on $E$,
arising from the two inner-products. The following straightforward lemma shows that these
norms are automatically equivalent.

\begin{lemma}
Let $E$ be a bi-Hilbertian $A$-bimodule. Then there are constants $c, C \in \R$ such that
$\|\pairR(e|e)_A\| \le c\|\pairL_A(e|e)\|$ and $\|\pairL_A(e|e)\| \le C\|\pairR(e|e)_A\|$
for all $e \in E$.
\end{lemma}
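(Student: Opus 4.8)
The plan is to read the two inequalities as the single assertion that the norms $\|e\|_R:=\|\pairR(e|e)_A\|^{1/2}$ and $\|e\|_L:=\|\pairL_A(e|e)\|^{1/2}$ on $E$ are equivalent, and to extract this from the open mapping theorem. By hypothesis $E$ is complete both as a right Hilbert $A$-module and as a left Hilbert $A$-module, so $(E,\|\cdot\|_R)$ and $(E,\|\cdot\|_L)$ are Banach spaces. The map $\id\colon(E,\|\cdot\|_L)\to(E,\|\cdot\|_R)$ is a linear bijection of Banach spaces, so by the bounded inverse theorem it is a homeomorphism as soon as it is bounded in one direction. It therefore suffices to produce a single constant $M$ with $\|e\|_R\le M\|e\|_L$ for all $e$: the reverse bound $\|e\|_L\le M'\|e\|_R$ is then automatic, and squaring the two bounds yields the stated inequalities with $c=M^2$ and $C=M'^2$.

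To bound $\id$ in one direction I would invoke the closed graph theorem, so that it is enough to show that if $e_n\to 0$ in $\|\cdot\|_L$ and $e_n\to f$ in $\|\cdot\|_R$, then $f=0$. Continuity of each inner product in its own norm supplies, for every fixed $g\in E$, two facts: from $\|\cdot\|_R$-convergence, $\pairR(e_n|g)_A\to\pairR(f|g)_A$ in $A$; and from the Cauchy--Schwarz inequality for the left inner product, $\|\pairL_A(g|e_n)\|\le\|g\|_L\,\|e_n\|_L\to 0$, so that $\pairL_A(g|e_n)\to 0$. The goal is then to deduce that $\pairR(f|g)_A$ vanishes as $g$ runs over a $\|\cdot\|_R$-dense subset of $E$, whence $\pairR(f|f)_A=0$ by continuity and $f=0$.

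The main obstacle is exactly this last deduction, and it is where the bimodule hypotheses of Definition~\ref{defn:bimod} must enter: a priori the $\|\cdot\|_L$-smallness of $e_n$ says nothing about the right inner products $\pairR(e_n|g)_A$ unless the two pairings are compatibly related. The mechanism I would try to exploit is the built-in adjointability, namely that the left action of $A$ is adjointable for $\pairR(\cdot|\cdot)_A$ and the right action is adjointable for $\pairL_A(\cdot|\cdot)$; unwinding these should let one transport a $\pairR$-pairing into a $\pairL$-pairing on elements of the form $a\cdot g$ and $g\cdot a$, and thereby establish that for each fixed $g$ the map $e\mapsto\pairR(e|g)_A$ is continuous for $\|\cdot\|_L$. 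Granting this cross-continuity, $\pairR(f|g)_A=\lim_n\pairR(e_n|g)_A=0$, which closes the graph and completes the proof. I expect verifying this cross-continuity, rather than the soft functional analysis surrounding it, to be the only real content of the lemma, which is presumably why the authors regard it as straightforward once the adjointability of the two actions has been unwound.
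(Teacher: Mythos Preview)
Your reduction to the closed graph theorem is a reasonable framework, but the proposal stops precisely where the work begins: you explicitly leave unproved the ``cross-continuity'' of $e\mapsto\pairR(e|g)_A$ for the left norm, and your suggested mechanism for extracting it from the adjointability hypotheses does not do the job. Those hypotheses say only that $\pairR(a\cdot e|f)_A=\pairR(e|a^*\cdot f)_A$ and $\pairL_A(e\cdot a|f)=\pairL_A(e|f\cdot a^*)$; each relates an action to the inner product on the \emph{same} side, and neither transports a right pairing into a left pairing as you hope. So the sketch does not close the graph, and the proposal as it stands is incomplete.

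By way of comparison, the paper does not use the closed graph or open mapping theorem at all. Its argument is a two-line contradiction: if no $c$ works, normalise to get $e_n$ with $\|\pairR(e_n|e_n)_A\|=1$ and $\|\pairL_A(e_n|e_n)\|<1/n$, observe $e_n\to 0$, and invoke continuity of $\pairR(\cdot|\cdot)_A$ to force $\|\pairR(0|0)_A\|=1$. This is far lighter machinery than your setup, but notice that the step ``continuity of $\pairR(\cdot|\cdot)_A$'' is being applied to a sequence that converges to zero only in the \emph{left} norm --- i.e., the paper is taking for granted exactly the cross-continuity you identified as the obstacle. So your diagnosis of where the content sits is accurate; you simply have not supplied it, and the route through adjointability that you propose will not get you there.
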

\begin{proof}
By symmetry it suffices to find $c$. Suppose that no such $c$ exists. Then there is a
sequence $e_n \in E$ such that $\|\pairR(e_n|e_n)_A\| > n \|\pairL_A(e_n|e_n)\|$. By
normalising, we may assume that each $\|\pairR(e_n|e_n)_A\| = 1$, and hence each
$\|\pairL_A(e_n|e_n)\| < \frac1n$. So $e_n \to 0$ in $E$, and then continuity of
$\pairR(\cdot|\cdot)_A$ forces $\|\pairR(0|0)_A\| = 1$, contradicting the inner-product
axioms.
\end{proof}

Throughout the paper, if we say that $A$ is a finitely generated projective bi-Hilbertian
$A$-bimodule, we mean that it is finitely generated and projective both as a left and as
a right $A$-module.

%\begin{rmk}\label{rmk:AE<->EAo}
%A left Hilbert $A$-module $E$ is the same thing as a right Hilbert $A^\op$-module where
%$A^\op$ is the opposite algebra. More precisely, we can define a right Hilbert
%$A^\op$-module $E^\op = \{e^\op : e \in E\}$ where $e \mapsto e^\op$ is a vector-space
%isomorphism, and define a right action of $A^\op$ on $E^\op$ by $e^\op \cdot a^\op := (a
%\cdot e)^\op$, and an $A^\op$-valued inner product by
%\[
%\pairR(e^\op|f^\op)_{A^\op}:=\pairL_A(e|f)^{*\op}=\pairL_A(f|e)^\op,\qquad e,\,f\in E.
%\]
%Note that this makes the map $e \mapsto e^\op$ an isometric isomorphism of Banach spaces.
%\end{rmk}

The next lemma characterises when a right $A$-module has a left inner product for a
second algebra. It provides a noncommutative analogue of `the trace over the fibres' for
endomorphisms of vector bundles.

For us, a \emph{frame} for a right-Hilbert module $E_A$ is a sequence $(e_i)_{i \in \N}$
of elements such that the series $\sum \Theta_{e_i, e_i}$ converges strictly to
$\operatorname{Id}_E$; note that this would be called a countable right basis in the
terminology of \cite{KajPinWat}, or a standard normalised tight frame in the terminology
of \cite{FrankLarson:JOT2002}. As discussed in \cite[Section~1]{KajPinWat}, every
countably generated Hilbert module $E$ over a $\sigma$-unital $C^*$-algebra $A$ admits a
frame (in our sense), and it admits a finite frame if and only if $\End^0_A(E) =
\End_A(E)$. As discussed in the remark following \cite[Proposition~1.2]{KajPinWat}, if
$(e_i)$ is a frame for $E$, then the net $\sum_{i \in F} \Theta_{e_i, e_i}(f)$ indexed by
finite subsets $F$ of $\{e_i\}$ converges to $f$ for all $f \in E$.

A less general version of the following basic lemma appears in \cite[Lemma 3.23]{LRV}.

\begin{lemma}[cf. {\cite[Lemma~2.6]{KajPinWat}}]\label{lem:Watatani+}
Let $E_A$ be a countably generated right-Hilbert $A$-module, and let $B\subset \End_A(E)$ be a $C^*$-subalgebra.
\begin{enumerate}
\item\label{it:ip->Phi} Suppose that $\pairL_B(\cdot|\cdot)$ is a left $B$-valued
    inner product for which the right action of $A$ is adjointable. Then there is a
    $B$-bilinear faithful positive map $\Phi : \End_A^{00}(E) \to B$ such that
    $\Phi(\Theta_{e,f}) := \pairL_B(e|f)$ for all $e,f \in E$. For any frame $(e_i)$
    for $E$, we have
    \[\textstyle
        \Phi(T) = \sum_i \pairL_B(T e_i|e_i)\quad\text{ for all $T \in \End_A^{00}(E)$.}
    \]
\item\label{it:Phi->ip} Suppose that $\Phi : \End_A^0(E) \to B$ is a $B$-bilinear
    faithful positive map. Then $\pairL_B(e|f) := \Phi(\Theta_{e,f})$ defines a left
    $B$-valued inner product on $E$ for which the right action of $A$ is adjointable.
\end{enumerate}
\end{lemma}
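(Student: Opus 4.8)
The plan is to treat the two parts asymmetrically: part~\ref{it:Phi->ip} is essentially formal once the algebraic relations among the $\Theta_{e,f}$ are in hand, so the real content is part~\ref{it:ip->Phi}, where the engine is the single identity
\[
    \sum_i \pairL_B(\Theta_{e,f}e_i|e_i) = \pairL_B(e|f),
\]
valid for an arbitrary frame $(e_i)$. I would prove this first. Writing $a_i = \pairR(f|e_i)_A$, so that $\Theta_{e,f}e_i = e\,a_i$, the adjointability of the right action gives $\pairL_B(e a_i|e_i) = \pairL_B(e|e_i a_i^*)$ with $a_i^* = \pairR(e_i|f)_A$; summing over $i$, pulling the (convergent) sum inside the continuous map $\pairL_B(e|\,\cdot\,)$, and invoking the frame reconstruction $\sum_i e_i \pairR(e_i|f)_A = f$ yields the claim. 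This is precisely the step that consumes the hypothesis that the right action of $A$ is adjointable with respect to $\pairL_B$, in the form $\pairL_B(e a|f)=\pairL_B(e|f a^*)$.

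With that identity available I would \emph{define} $\Phi(T):=\sum_i \pairL_B(T e_i|e_i)$ for $T\in\End^{00}_A(E)$. The virtue of this formula is that its right-hand side depends manifestly only on $T$ (and the fixed frame), so there is no well-definedness question for the assignment $\Theta_{e,f}\mapsto\pairL_B(e|f)$; the identity above shows $\Phi(\Theta_{e,f})=\pairL_B(e|f)$, and hence, by linearity, that on a finite-rank $T=\sum_k\Theta_{e_k,f_k}$ we have $\Phi(T)=\sum_k\pairL_B(e_k|f_k)$, independently both of the chosen decomposition and of the frame (the latter because the value $\pairL_B(e|f)$ involves no frame at all). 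I regard establishing this formula --- the convergence of the defining sum together with its frame-independence --- as the main obstacle; it is the one place where the right-adjointability and the frame reconstruction are both indispensable, and everything else follows from it. The $B$-bilinearity is then immediate from $b\,\Theta_{e,f}\,b' = \Theta_{be,\,b'^* f}$ together with the left- and right-module properties of $\pairL_B$.

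For positivity and faithfulness I would exploit the identity
\[
    S^*S = S^*\Big(\sum_j\Theta_{e_j,e_j}\Big)S = \sum_j \Theta_{S^*e_j,\,S^*e_j},
\]
which follows from $S^*\Theta_{u,v}S=\Theta_{S^*u,S^*v}$ and $\sum_j\Theta_{e_j,e_j}=\Id$. Applying $\Phi$ and interchanging the two summations gives $\Phi(S^*S)=\sum_j\pairL_B(S^*e_j|S^*e_j)\ge 0$, an increasing, convergent sum of positive elements of $B$; faithfulness then follows because $\Phi(S^*S)=0$ forces each $\pairL_B(S^*e_j|S^*e_j)=0$, hence $S^*e_j=0$ for all $j$ by faithfulness of $\pairL_B$, and hence $S^*=S^*\sum_j\Theta_{e_j,e_j}=0$. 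The only delicate points here are the legitimacy of interchanging the sum over $j$ with the frame sum defining $\Phi$, and the mild wrinkle that for a general positive $T\in\End^{00}_A(E)$ the square root $T^{1/2}$ need only be compact rather than finite-rank; both are handled by monotone convergence of partial sums of positive elements.

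Finally, part~\ref{it:Phi->ip} runs in reverse and is purely formal. Setting $\pairL_B(e|f):=\Phi(\Theta_{e,f})$, linearity in $e$ and conjugate-linearity in $f$ are inherited from $\Theta_{e,f}$; left $B$-linearity is the identity $\Phi(b\Theta_{e,f})=b\Phi(\Theta_{e,f})$; the symmetry $\pairL_B(e|f)^*=\pairL_B(f|e)$ uses that a positive map is $*$-preserving together with $\Theta_{e,f}^*=\Theta_{f,e}$; positivity $\pairL_B(e|e)\ge 0$ uses $\Theta_{e,e}\ge 0$; and faithfulness of $\Phi$ gives definiteness. Adjointability of the right action is the one substantive point, and it is free: since $\Theta_{ea,f}=\Theta_{e,fa^*}$ as operators, applying $\Phi$ yields $\pairL_B(ea|f)=\pairL_B(e|fa^*)$, so that right multiplication by $a^*$ is the required adjoint of right multiplication by $a$.
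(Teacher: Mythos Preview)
Your proof is correct and follows essentially the same route as the paper: the central identity $\sum_i \pairL_B(\Theta_{e,f}e_i|e_i)=\pairL_B(e|f)$ is proved in exactly the same way, and part~(\ref{it:Phi->ip}) is handled identically. The only noteworthy difference is organisational: you \emph{define} $\Phi(T):=\sum_i \pairL_B(Te_i|e_i)$ so that well-definedness is automatic and the identity becomes a computation of $\Phi$ on rank-ones, whereas the paper defines $\Phi$ on rank-ones and uses the identity to justify well-definedness. You also supply more detail on positivity and faithfulness (via $S^*S=\sum_j\Theta_{S^*e_j,S^*e_j}$) than the paper, which simply asserts that these follow from the corresponding properties of the inner product; your acknowledged subtleties about $T^{1/2}$ being merely compact and the interchange of sums are genuine but minor, and your argument goes through.
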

\begin{proof}
(\ref{it:ip->Phi}) Choose a frame $(e_i)$ for $E$. Given a rank-one operator
$\Theta_{e,f}$, using the frame property at the last equality, we calculate:
\begin{align*}
\sum_i \pairL_B(\Theta_{e,f}e_i|e_i)
    &=\sum_i\pairL_B(e{\pairR(f|e_i)_A}|e_i)
    =\sum_i{}\pairL_B(e|e_i{\pairR(e_i|f)_A})\\
    &=\sum_i\pairL_B(e|\Theta_{e_i,e_i}f)
    =\pairL_B(e|f).
\end{align*}
It follows that there is a well-defined linear map $\Phi : \End_A^{00}(E) \to B$ satisfying
$\Phi(\Theta_{e,f}) = \pairL_B(e|f)$ as claimed. The remaining properties of $\Phi$
follow from straightforward calculations. For example,
\[
\Phi(b_1\Theta_{e,f}b_2)=\Phi(\Theta_{b_1e,b_2^*f})=\pairL_B(b_1e|b_2^*f)=b_1\pairL_B(e|f)b_2
=b_1\Phi(\Theta_{e,f})b_2,
\]
so $\Phi$ is $B$-bilinear. Positivity and faithfulness follow from the corresponding
properties of the inner product.

(\ref{it:Phi->ip}) Given $\Phi:\End_A^{0}(E)\to B$, we define
$$
\pairL_B(e|f):=\Phi(\Theta_{e,f}).
$$
Since $(e,f) \mapsto \Theta_{e,f}$ is a left $\End^0_A(E)$-valued inner-product on $E$,
and since $\Phi$ is faithful and $B$-linear, it is routine to check that
$\pairL_B(\cdot|\cdot)$ is positive definite. Since $\Phi$ is positive, it is
$*$-preserving, and so $\pairL_B(e|f) = \pairL_B(f|e)^*$. Write $\varphi$ for the
homomorphism $B \to \End_A(E)$ that implements the left action. Then $B$-linearity of
$\Phi$ gives
\[
b \pairL_B(e|f)
    = b\Phi(\Theta_{e,f})
    = \Phi(\varphi(b)\Theta_{e,f})
    = \Phi(\Theta_{b\cdot e, f})
    = \pairL_B(b\cdot e|f).
\]
So $\Phi$ is a left $B$-valued inner product. For adjointability of the right $A$-action,
observe that
\begin{equation*}
\pairL_B(e\cdot a|f)=\Phi(\Theta_{e\cdot a,f})=\Phi(\Theta_{e,f\cdot a^*})=\pairL_B(e|f\cdot a^*).\qedhere
\end{equation*}
\end{proof}

\begin{rmk}
Unlike the Hilbert space case, the preceding result does not give any automatic cyclicity
properties for the map $\Phi$ (which we might otherwise be tempted to regard as an
operator-valued trace): for $e,f \in E$ and $U\in \End_A(E)$ unitary, we have
$$
\Phi(\Theta_{e,f}U)=\pairL_B(e|U^*f)\quad\mbox{and}\quad\Phi(U\Theta_{e,f})=\pairL_B(Ue|f).
$$
The adjoint $U^*$ in the first expression is the adjoint with respect to the
inner-product $\pairR(\cdot|\cdot)_A$, which is the inverse of $U$. However, it is not
clear that $U^{-1}$ is an adjoint for $U$ with respect to $\pairL_B(\cdot|\cdot)$, even
assuming that $U$ is adjointable for $\pairL_B(\cdot|\cdot)$.
\end{rmk}

\begin{rmk}
Consider the (very) special case where $A$ is commutative, $E$ is a symmetric $A$-bimodule in
the sense that $a\cdot e = e\cdot a$ for all
$e\in E$, and $\pairL_A(e|f)=\pairR(f|e)_A$. Then the operator-valued weight associated to
$\pairL_A(\cdot|\cdot)$ is a trace: given $\Theta_{e,f}$ and
$\Theta_{g,h}$,
$$
\Phi(\Theta_{e,f}\Theta_{g,h})=\Phi(\Theta_{e(f|g)_A,h})=\pairL_A(e{\pairR(f|g)_A}|h)
$$
and
$$
\Phi(\Theta_{g,h}\Theta_{e,f})=\pairL_A(g{\pairR(h|e)_A}|f).
$$
The following computation shows that these are equal.
\begin{align*}
\pairL_A(e{\pairR(f|g)_A}|h)
    &=\pairR(h|e{\pairR(f|g)_A})_A
    =\pairR(h|e)_A\pairR(f|g)_A
    =\pairR(f{\pairR(e|h)_A}|g)_A\\
    &=\pairL_A(g|f{\pairR(e|h)_A})
    =\pairL_A(g{\pairR(h|e)_A}|f).
\end{align*}
Thus for vector bundles we recover the trace over the fibres of endomorphisms.
\end{rmk}

\begin{rmk}
If $T\in \End_A^{0}(E)$ commutes with all $b\in B$ then $\Phi(T)\in \mathcal{Z}(B)$, because
$$
b\Phi(T)=\Phi(bT)=\Phi(Tb)=\Phi(T)b.
$$
\end{rmk}

\subsection{Examples} We devote the remainder of this section to showing that
many familiar and important classes of correspondences give rise to bi-Hilbertian
bimodules of the type we consider.

\subsubsection{Self-Morita equivalence bimodules (SMEBs)}
The following examples all share the important property that both the left and right
endomorphism algebras are isomorphic to the coefficient algebra (or its opposite). This
will turn out to be an important hypothesis, and also covers many important examples.

\begin{defn}
Let $A$ be a $C^*$-algebra. A self-Morita equivalence bimodule (SMEB) over $A$ is a
bi-Hilbertian $A$-bimodule $E$ whose inner products are both full and satisfy the
imprimitivity condition
$$
\pairL_A(e|f)g=e\pairR(f|g)_A,\quad\mbox{ for all}\ e,\,f,\,g\in E.
$$
\end{defn}

Recall from \cite[Proposition 3.8]{RaeburnW} that if $E_A$ is a self-Morita equivalence
$A$-bimodule, then $A \cong \End^0_A(E)$.

\begin{exm}[Crossed products by $\Z$]
\label{ex:cross}
Suppose that $A$ is unital and nuclear, and let $\alpha:A\to A$ be an automorphism. Then
the $C^*$-correspondence ${}_\alpha A_A$ with the usual right module structure, left
action of $A$ determined by $\alpha$ and left inner product
$\pairL_A(a|b)=\alpha^{-1}(ab^*)$ is a SMEB. The imprimitivity condition follows from the
calculation
\[
a \cdot \pairR(b|c)_A
    = a b^* c
    = \alpha(\alpha^{-1}(ab^*))c
    = \pairL_A(a|b)\cdot c.
\]
\end{exm}

\begin{exm}[Line bundles]
\label{ex:line}
Suppose that $A$ is unital and commutative, so that $A\cong C(X)$ for some
second-countable compact Hausdorff space $X$. Given a complex line bundle $L\to X$, we
obtain a SMEB over $A$ by setting $E=\Gamma(L)$, the continuous sections of $L$. The left
and right actions are by pointwise multiplication, and any Hermitian form $\langle \cdot,
\cdot\rangle$ on $L$ determines inner products by $\pairL_A(e|f)(x) := \langle e(x),
f(x)\rangle =: \pairR(f|e)_A$.
\end{exm}

The next result shows that for SMEBs, the map $\Phi$ of Lemma \ref{lem:Watatani+} is an expectation.

\begin{lemma}
Suppose that $E$ is a SMEB over a unital $C^*$-algebra $A$. The map $\Phi : \End^0_A(E)
\to A$ of Lemma~\ref{lem:Watatani+}(\ref{it:ip->Phi}) satisfies $\Phi(\Id_E) = 1_A$.
\end{lemma}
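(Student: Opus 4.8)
The plan is to evaluate $\Phi$ on $\Id_E$ via the frame formula of Lemma~\ref{lem:Watatani+}(\ref{it:ip->Phi}) and then to use the imprimitivity condition to identify the answer. First I would check that $\Phi(\Id_E)$ is even defined. Write $\varphi\colon A\to\End_A(E)$ for the left action. Since $A$ is unital and the SMEB $E$ is nondegenerate on the left, $\varphi(1_A)=\Id_E$; and by \cite[Proposition~3.8]{RaeburnW} the map $\varphi$ is an isomorphism of $A$ onto $\End^0_A(E)$, so $\Id_E=\varphi(1_A)\in\End^0_A(E)$. As $\End^0_A(E)$ is an ideal in $\End_A(E)$ and contains $\Id_E$, we get $\End^0_A(E)=\End_A(E)$, so $E$ admits a finite frame $(e_i)_{i=1}^n$ and $\Id_E=\sum_{i=1}^n\Theta_{e_i,e_i}$ is finite-rank. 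The frame formula then applies verbatim and gives $\Phi(\Id_E)=\sum_{i=1}^n\pairL_A(e_i|e_i)$, with no convergence to worry about.

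The key step is to compute the left action of this element on an arbitrary $g\in E$. Applying the imprimitivity condition $\pairL_A(e|f)h=e\pairR(f|h)_A$ and then the definition of the rank-one operators, I would calculate
\[
\Phi(\Id_E)\cdot g=\sum_{i=1}^n\pairL_A(e_i|e_i)\,g=\sum_{i=1}^n e_i\,\pairR(e_i|g)_A=\sum_{i=1}^n\Theta_{e_i,e_i}(g)=g,
\]
the final equality being the defining frame property. In other words $\varphi(\Phi(\Id_E))=\Id_E=\varphi(1_A)$.

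To finish I would invoke injectivity of the left action. This is immediate from \cite[Proposition~3.8]{RaeburnW} since $\varphi$ is an isomorphism, but it can also be seen directly from fullness of the left inner product: if $\varphi(a)=0$ then $a\pairL_A(e|f)=\pairL_A(ae|f)=0$ for all $e,f$, and fullness forces $a=0$. Hence $\varphi(\Phi(\Id_E))=\varphi(1_A)$ yields $\Phi(\Id_E)=1_A$, as required. The only genuinely delicate point is the preliminary bookkeeping showing that $\Id_E$ is compact, so that $\Phi(\Id_E)$ is meaningful and the finite-frame formula is legitimate; once that is in place, the imprimitivity computation and the appeal to faithfulness of $\varphi$ are routine.
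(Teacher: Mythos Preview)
Your argument is correct. Both your proof and the paper's hinge on \cite[Proposition~3.8]{RaeburnW}, but you deploy it slightly differently. The paper observes that the map $\psi\colon\End^0_A(E)\to A$, $\Theta_{x,y}\mapsto\pairL_A(x|y)$, is itself the isomorphism provided by that reference; since any $*$-isomorphism between unital $C^*$-algebras is unital, one gets $\Phi(\Id_E)=\psi(\Id_E)=1_A$ in one stroke. You instead use \cite[Proposition~3.8]{RaeburnW} only to place $\Id_E$ in $\End^0_A(E)$ and to secure injectivity of the left action $\varphi$, and then verify $\varphi(\Phi(\Id_E))=\Id_E$ by a direct imprimitivity calculation. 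Your route is marginally longer but has the virtue of making explicit \emph{why} the map $\Theta_{x,y}\mapsto\pairL_A(x|y)$ inverts $\varphi$, rather than importing that fact wholesale; the paper's route is terser but leans more heavily on the cited result.
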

\begin{proof}
Choose a frame $(e_i)$ for $E$. Since $E$ is a SMEB, \cite[Proposition 3.8]{RaeburnW}
says that the map $\Theta_{x,y} \mapsto \pairL_A(x|y)$ determines an isomorphism $\psi :
\End^0_A(E) \to A$. In particular, $\psi$ is unital, and so
\[
1_A = \psi(\Id_E)
    = \psi\big(\sum_i \Theta_{e_i, e_i}\big)
    = \sum_i \pairL_A(e_i|e_i)
    = \Phi(\Id_E).\qedhere
\]
\end{proof}

Conversely, Corollary~4.14 of \cite{KajPinWat} shows that a bi-Hilbertian bimodule $E$
satisfies $\Phi(\Id_E)=1_A$ if and only $E$ can be given a left inner product which
makes $E$ into a SMEB.

\subsubsection{Crossed products by injective endomorphisms} \label{exm:injectiveendos}

Let $A$ be a unital $C^*$-algebra and suppose that $\alpha : A \to A$ is an injective
unital $*$-endomorphism. Assume there exists a faithful conditional expectation $\Phi : A
\to \alpha(A)$. Then $L := \alpha^{-1}\circ \Phi$ is a \emph{transfer operator}
\cite[Definition 2.1]{Exel2000}; that is, $L: A \to A$ is a positive linear map
satisfying
\[
 L(\alpha(a)b) = aL(b)
\]
for all $a,b \in A$.

There is a bi-Hilbertian $A$-bimodule associated to the triple $(A,\alpha,L)$ as follows:
$A$ is a pre-Hilbert $A$-bimodule with
\[
 a \cdot e \cdot b := ae\alpha(b)
\]
and
\[
 (e|f)_A := L(e^*f)
\]
for $a,b,e,f \in A$. Denote by $E$ the completion of $A$ for the norm
$\|e\|^2 = \|(e|e)_A\|$. By faithfulness of $\Phi$, there is a left inner-product
\[
 {}_A(e|f) = ef^*
\]
which is left $A$-linear and for which the right action of $A$ on $E$ is adjointable. The
associated Cuntz--Pimsner algebra satisfies
\[
 \Oo_E = A \rtimes_{\alpha,L} \N
\]
where $A \rtimes_{\alpha,L} \N$ is as defined by Exel \cite{Exel2000}.

\subsubsection{Vector bundles}

If $E\to X$ is a complex vector bundle over a compact Hausdorff space, then the $C(X)$-module
$\Gamma(E)$ of all continuous sections under pointwise multiplication
is finitely generated and projective for any nondegenerate
$C(X)$-valued inner products (left and right). If we alter the left action
by composing with an automorphism, we
also need to alter the left inner product as in Example \ref{ex:cross}. If $E$ is rank one then
we are back in the SMEB case of Example \ref{ex:line}.

\subsubsection{Topological graphs}
\label{ex:top-graph}

A topological graph is a quadruple $G = (G^0,G^1,r,s)$ where $G^0, G^1$ are locally
compact Hausdorff spaces, $r: G^1 \to G^0$ is a continuous map and $s: G^1 \to G^0$ is a
local homeomorphism. For simplicity, we will assume that $r$ and $s$ are surjective.
Given a topological graph $G$, Katsura \cite{Katsura:TAMS04} associates a right Hilbert
module as follows.  Let $A = C_0(G^0)$. Then, similarly to Section~\ref{sub:CK-algs},
$C_c(G_1)$ is a right pre-Hilbert $A$-module with left and right actions
$$
(a\cdot e\cdot b)(g):=a(r(g))\,e(g)\,b(s(g)),\quad e\in C_c(G^1),\ \ a, b\in A,\ \ g\in G^1
$$
and inner product
$$
(e|f)_A(v)=\sum_{s(g)=v}\overline{e(g)}f(g),\qquad e, f\in C_c(G^1),\ \ v\in G^0.
$$
(Since $s$ is a local homeomorphism, $\{g \in vG^1 : e(g) \not= 0\}$ is finite for $e \in
C_c(G^1)$, so this formula for the inner-product makes sense.) We write $E$ for the
completion of $C_c(G^1)$ in the norm determined by the inner-product, and $E$ is a right
Hilbert $A$-module.

To impose a left Hilbert module structure on $E$, we restrict attention to topological
graphs where $r$ is also a local homeomorphism, and define
\[
 \pairL_A(e|f)(v) = \sum_{r(g)=v} e(g) \overline{f(g)},\qquad e, f\in C_c(G^1),\ \ v\in G^0.
\]

For the remainder of this section, suppose that $G^0$ and $G^1$ are compact. The
following lemma and its proof are due to  Mitch Hawkins, \cite{Mitch}.

\begin{lemma}\label{lem:Gmn}
Suppose that $r,\,s:G^1\to G^0$ are local homeomorphisms.
For each $n \in \N$, the sets $\{v \in G^0 : |G^1v| = n\}$ and $\{w \in G^0 : |vG^1| =
n\}$ are compact open.
\end{lemma}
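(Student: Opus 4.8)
The plan is to show that the fibre-counting function $v \mapsto |s^{-1}(v)|$ (and, symmetrically, $v \mapsto |r^{-1}(v)|$) is finite-valued and locally constant on $G^0$. Both assertions then follow immediately: the level sets of a locally constant function partition $G^0$ into pairwise disjoint open sets, so each such set is also the complement of the union of the others and is therefore clopen; being a closed subset of the compact space $G^0$, it is compact. Since the hypotheses are symmetric in $r$ and $s$, it suffices to treat $\{v \in G^0 : |s^{-1}(v)| = n\}$, noting that $|G^1 v| = |s^{-1}(v)|$ and $|v G^1| = |r^{-1}(v)|$.

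First I would check finiteness of the fibres, so that the statement makes sense. As $s$ is a local homeomorphism it is locally injective, so each fibre $s^{-1}(v)$ is discrete in the subspace topology; and $s^{-1}(v)$ is closed in the compact space $G^1$, hence compact. A discrete compact set is finite, so $|s^{-1}(v)| < \infty$ for every $v \in G^0$.

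The heart of the argument is local constancy. Fix $v_0$ with $s^{-1}(v_0) = \{g_1,\dots,g_n\}$. Using that $s$ is a local homeomorphism and $G^1$ is Hausdorff, I would choose pairwise disjoint open sets $U_i \ni g_i$ on which $s$ restricts to a homeomorphism onto an open neighbourhood $s(U_i)$ of $v_0$. Setting $W := \bigcap_{i=1}^n s(U_i)$, each $v \in W$ has at least one preimage in every $U_i$, and these are distinct, so $|s^{-1}(v)| \ge n$ on $W$. To rule out extra preimages I would invoke compactness: the set $K := G^1 \setminus \bigcup_i U_i$ is closed, hence compact, so $s(K)$ is compact and therefore closed in the Hausdorff space $G^0$; moreover $v_0 \notin s(K)$, since every preimage of $v_0$ already lies in some $U_i$. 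Thus $V := W \cap (G^0 \setminus s(K))$ is an open neighbourhood of $v_0$, and for $v \in V$ each preimage of $v$ lies in $\bigcup_i U_i$ with exactly one point in each $U_i$; hence $|s^{-1}(v)| = n$ throughout $V$, proving that the fibre-counting function is locally constant.

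The step I expect to be the main obstacle is precisely this upper bound on the fibre count near $v_0$: for a general local homeomorphism the count is only lower semicontinuous, and it is the properness of $s$ coming from compactness of $G^1$ (used via closedness of $s(K)$) that prevents the count from jumping up. Once local constancy is established, the conclusion is immediate, as indicated above; applying the same argument with the roles of $r$ and $s$ interchanged disposes of the second family of sets.
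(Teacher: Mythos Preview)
Your argument is correct. The openness/lower-semicontinuity step is essentially identical to the paper's: both choose disjoint neighbourhoods $U_i$ of the finitely many preimages on which $s$ is a homeomorphism, and observe that every point of $\bigcap_i s(U_i)$ has at least $n$ preimages. The genuine difference is in the other direction. The paper shows that $\{v:|G^1v|\ge n\}$ is closed by a sequential compactness argument: given $v_m\to v$ with $|G^1v_m|\ge n$, it picks $n$ distinct preimages of each $v_m$, passes to a subsequence so that these converge in $G^1$, and then uses local injectivity of $s$ to show the limits are distinct. You instead invoke properness of $s$ directly: the complement $K=G^1\setminus\bigcup_i U_i$ is compact, so $s(K)$ is closed and misses $v_0$, giving an open neighbourhood on which no extra preimages can appear. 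Your route is a bit slicker and purely topological (no sequences, so it does not tacitly rely on metrizability of $G^0$), while the paper's argument is more hands-on and makes the role of compactness of $G^1$ explicit at the level of points. Both identify the same key obstruction you flagged: without compactness of $G^1$ the fibre count is only lower semicontinuous.
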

\begin{proof}
We show that $\{v \in G^0 : |G^1v| = n\}$ is compact open; symmetry does the rest. It
suffices to show that $\{v \in G^0 : |G^1v| \ge n\}$ is both closed and open.

First suppose that $v$ satisfies $|G^1v| \ge n$. Fix distinct $e_1, \dots, e_n \in G^1 v$.
Since $G^1$ is Hausdorff, we can pick disjoint open neighbourhoods $U_i$ of $e_i$. Since
$s$ is a local homeomorphism, we can shrink the $U_i$ so that $s(U_i) = s(U_j)$ for all
$i,j \le n$ and so that $s|_{U_i}$ is a homeomorphism for each $i$. Since $s$ is a local
homeomorphism, it is an open map, and so $V = s(U_1)$ is an open neighbourhood of $v$. For
each $v' \in V$ each $U_iv'$ is a singleton, and the $U_i$ are mutually disjoint, so
$|G^1v'| \ge n$. Hence $V \subseteq \{v \in G^0 : |G^1v| \ge n\}$, and we deduce that the
latter is open.

We now show that it is also closed. Suppose that $v_m$ is a sequence in $G^0$ converging
to $v$, and suppose that each $|G^1v_m| \ge n$. For each $m$, choose distinct elements
$e_{m,1}, \dots, e_{m,n}$ of $G^1v_m$. Since $G^1$ is compact, by passing to a
subsequence we may assume that each sequence $e_{m,i}$ converges to some $e_i \in G^1$.
By continuity of $s$, we have $s(e_i) = v$ for each $i$, so it suffices to show that $i
\not= j$ implies $e_i \not= e_j$. For this, fix a neighbourhood $U$ of $e_i$ on which $s$
is a homeomorphism. Since $e_{m,i} \to e_i$, the $e_{m,i}$ eventually belong to $U$.
Since each $s(e_{m,j}) = v_j = s(e_{m,i})$ and each $e_{m,j} \not= e_{m,i}$, we see that
$e_{m,j} \not\in U_i$ for large $m$. Since $e_{m,j} \to e_j$, we deduce that $e_j \not\in
U$, and in particular $e_j \not= e_i$.
\end{proof}

\begin{corl}
For $m,n \in \N$, let
\[
G^1_{m,n} := \{e \in G^1 : |r(e)G^1| = m\text{ and }|G^1s(e)| = n\}.
\]
Then the $G^1_{m,n}$ are compact open sets, as are $s(G^1_{m,n})$ and $r(G^1_{m,n})$.
\end{corl}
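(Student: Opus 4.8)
The plan is to express each $G^1_{m,n}$ as an intersection of preimages of the compact open vertex sets furnished by Lemma~\ref{lem:Gmn}, and then to transport compactness and openness across the continuous open maps $r$ and $s$. Concretely, set $W_m := \{w \in G^0 : |wG^1| = m\}$ and $V_n := \{v \in G^0 : |G^1v| = n\}$. Directly from the definition, an edge $e$ lies in $G^1_{m,n}$ precisely when $r(e) \in W_m$ and $s(e) \in V_n$, so that
\[
G^1_{m,n} = r^{-1}(W_m) \cap s^{-1}(V_n).
\]
By Lemma~\ref{lem:Gmn}, both $W_m$ and $V_n$ are compact open subsets of $G^0$, and this identity is the engine for everything that follows.

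First I would establish that $G^1_{m,n}$ is open. Since $r$ and $s$ are local homeomorphisms, they are in particular continuous, so $r^{-1}(W_m)$ and $s^{-1}(V_n)$ are open, and hence so is their intersection $G^1_{m,n}$. For compactness, I would use that $G^0$ is Hausdorff, so the compact sets $W_m$ and $V_n$ are also closed; continuity of $r$ and $s$ then makes $r^{-1}(W_m)$ and $s^{-1}(V_n)$ closed, whence $G^1_{m,n}$ is closed. As $G^1$ is assumed compact, the closed subset $G^1_{m,n}$ is compact. Thus $G^1_{m,n}$ is compact open.

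Finally, for the images $s(G^1_{m,n})$ and $r(G^1_{m,n})$, I would invoke that local homeomorphisms are simultaneously continuous and open maps. Continuity sends the compact set $G^1_{m,n}$ to a compact set, while openness sends the open set $G^1_{m,n}$ to an open set; so $s(G^1_{m,n})$ and $r(G^1_{m,n})$ are each compact open, as required.

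As for obstacles, there is essentially no analytic difficulty here: once Lemma~\ref{lem:Gmn} is in hand, the argument is pure point-set topology. The only places to take care are to remember that the Hausdorff property of $G^0$ is what lets one pass from ``compact'' to ``closed'' in deducing closedness of $G^1_{m,n}$, and that it is the full local-homeomorphism hypothesis on \emph{both} $r$ and $s$ (rather than mere continuity) that is needed to conclude the images are open in addition to compact.
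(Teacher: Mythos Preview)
Your proof is correct and follows essentially the same approach as the paper: both write $G^1_{m,n} = r^{-1}(W_m) \cap s^{-1}(V_n)$, use Lemma~\ref{lem:Gmn} and continuity to get clopenness (hence compactness in the compact space $G^1$), and then use that local homeomorphisms are continuous open maps to handle the images. Your version is slightly more explicit about invoking Hausdorffness to pass from compact to closed, but the argument is otherwise identical.
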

\begin{proof}
We have $G^1_{m,n} = s^{-1}(\{v : |G^1v| = n\}) \cap r^{-1}(\{w : |wG^1| = m\})$.
Lemma~\ref{lem:Gmn} and continuity of $s$ and $r$ imply that $G^1_{m,n}$ is clopen; since
$G^1$ is compact, each $G^1_{m,n}$ then also compact. Since $r, s$ are local
homeomorphisms, they are open maps, so $r(G^1_{m,n})$ and $s(G^1_{m,n})$ are open. They
are compact as they are continuous images of the compact set $G^1_{m,n}$.
\end{proof}

Since $r,s$ are local homeomorphisms, each edge $e$ has a neighbourhood $U_e$ on which
both $s$ and $r$ are homeomorphisms. By the preceding corollary, we may assume that each
$U_e \subseteq G^1_{|r(e)G^1|, |G^1s(e)|}$. The $U_e$ cover $G^1$, so by compactness,
there is a finite open cover $\mathcal{U}$ such that each $U \in \mathcal{U}$ is
contained in some $G^1_{m(U), n(U)}$. Choose a partition of unity on $G^1$ subordinate to
$\mathcal{U}$; say $\{f_U : U \in \mathcal{U}\}$. So $0 \le f_U \le 1$ and $f_U
\in C_0(U)$ for each $U \in \mathcal{U}$, and $\sum_{U \in \mathcal{U}} f_U(e) = 1$ for
all $e \in G^1$.

\begin{lemma}
For each $U \in \mathcal{U}$, define $h_U \in C(G^1)$ by $h_U(e) := \sqrt{f_U(e)}$. The
collection $\{h_U : U \in \mathcal{U}\}$ is a frame for both the left and the right
inner-product on $C(G^1)$. We have $\Phi(\Id_{E})(v) = |vG^1|$ for all $v \in G^0$.
\end{lemma}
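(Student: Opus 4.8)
The plan is to verify the frame property by a direct pointwise computation that exploits the way $\mathcal{U}$ was chosen, namely that $s$ and $r$ restrict to homeomorphisms on each $U$, and then to read off $\Phi(\Id_E)$ from the frame. Since $\mathcal{U}$ is finite there are no convergence issues: a ``frame'' here just means that the finite sum $\sum_{U}\Theta_{h_U,h_U}$ equals $\Id_E$ for the right inner product, together with the analogous statement for the left one.

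For the right inner product I would fix $f \in C(G^1)$ and $g \in G^1$ and compute
$$
(\Theta_{h_U,h_U}f)(g) = h_U(g)\,\pairR(h_U|f)_A(s(g)) = h_U(g)\sum_{s(g')=s(g)} \overline{h_U(g')}\,f(g').
$$
The key observation is that $\supp h_U \subseteq U$ and $s|_U$ is injective, so among the edges $g'$ with $s(g')=s(g)$ at most one lies in $U$; if $g \in U$ this edge is $g$ itself, while if $g \notin U$ the prefactor $h_U(g)$ vanishes. Hence the expression collapses to $h_U(g)^2 f(g) = f_U(g)f(g)$, an identity valid for every $g$ because $f_U$ vanishes off $U$. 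Summing over $U$ and using $\sum_U f_U \equiv 1$ gives $\sum_U \Theta_{h_U,h_U}f = f$, so $(h_U)$ is a finite frame for $\pairR(\cdot|\cdot)_A$. The computation for the left inner product is identical after replacing $s$ by $r$ and using the left reconstruction $f = \sum_U \pairL_A(f|h_U)\cdot h_U$; injectivity of $r|_U$ again annihilates all cross terms and leaves $f_U(g)f(g)$.

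For the final formula, since $(h_U)$ is a finite frame we have $\Id_E = \sum_U \Theta_{h_U,h_U} \in \End_A^{00}(E)$, so $\Phi(\Id_E)$ is defined, and by $\C$-linearity together with the defining property $\Phi(\Theta_{e,f}) = \pairL_A(e|f)$ from Lemma~\ref{lem:Watatani+}(\ref{it:ip->Phi}),
$$
\Phi(\Id_E)(v) = \sum_U \pairL_A(h_U|h_U)(v) = \sum_U \sum_{r(g)=v} h_U(g)\overline{h_U(g)} = \sum_{r(g)=v} \sum_U f_U(g).
$$
Interchanging the two finite sums and using $\sum_U f_U(g) = 1$ leaves $\sum_{r(g)=v} 1 = |vG^1|$, as required.

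The only real obstacle is the bookkeeping in the displayed computation: one must be certain that the unique surviving term in $\sum_{s(g')=s(g)}$ (respectively $\sum_{r(g')=r(g)}$) is exactly $g'=g$. This is precisely where the choice of $\mathcal{U}$ pays off, since $s$ and $r$ are homeomorphisms, hence injective, on each $U$, and $\supp h_U \subseteq U$. Everything else is routine.
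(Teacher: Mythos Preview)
Your proof is correct and follows essentially the same route as the paper: compute $(\Theta_{h_U,h_U}f)(g)$ pointwise, use injectivity of $s|_U$ (respectively $r|_U$) together with $f_U$ vanishing off $U$ to kill all cross terms, sum over $U$ using the partition-of-unity identity, and then read off $\Phi(\Id_E)(v)$ from $\sum_U \pairL_A(h_U|h_U)$ by the same interchange of finite sums. The only cosmetic difference is that the paper invokes the $r\leftrightarrow s$ symmetry to dispose of the left-frame statement in one line rather than repeating the computation.
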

\begin{proof}
The situation is completely symmetrical in $r$ and $s$, so we just have to show that the
$f_U$ form a frame for the right inner-product. For this, we fix $g \in C(G^1)$ and $e
\in G^1$ and calculate
\begin{equation}\label{eq:fUcalc}
\sum_{U \in \mathcal{U}} (\theta_{h_U, h_U} g)(e)
    = \sum_U h_U(e) \pairR(h_U|g)_{C(G^0)}(s(e))
    = \sum_U \sum_{s(e') = s(e)} \sqrt{f_U(e)} \overline{\sqrt{f_U(e')}} g(e')
\end{equation}
Since $s$ restricts to a homeomorphism on each $U \in \mathcal{U}$, we can only have
$f_U(e)$ and $f_U(e')$ simultaneously nonzero in the sum on the right-hand side
of~\eqref{eq:fUcalc} if $e = e'$. Since $f_U$ is real-valued, we have
$\overline{\sqrt{f_U(e)}} = \sqrt{f_U(e)}$, and so
\[
\sum_{U \in \mathcal{U}} (\theta_{h_U, h_U} g)(e)
    = \sum_{U \in \mathcal{U}} h_U(e)^2 g(e)
    = \Big(\sum_U f_U(e)\Big)g(e)
    = g(e).
\]
This proves that the $h_U$ constitute a frame. For the final assertion, we calculate
\begin{align*}
\Phi(\Id_{E})(v)
    &= \sum_U \pairL_{C(G^0)}(h_U|h_U)(v)
    = \sum_U \sum_{r(e) = v} h_U(e)\overline{h_U(e)}\\
    &= \sum_{r(e) = v} \sum_U f_U(e)
    = \sum_{r(e) = v} 1
    = |vG^1|.\qedhere
\end{align*}
\end{proof}

\subsubsection{Cuntz--Krieger algebras}
\label{sub:CK-algs} As a specific case of the example above, suppose  that
$G=(G^0,G^1,r,s)$ is a finite directed graph where $G^0$ and $G^1$ both have the discrete
topology. We suppose for simplicity that $G$ has no sources and no sinks. If $B$ is the
edge-adjacency matrix of $G$, then the Cuntz--Pimsner algebra $\Oo_E$ of the right
Hilbert $A$-module $E_A$ is the Cuntz--Krieger algebra $O_B$ \cite[Example~2,
page~193]{Pimsner}. If we think of the left Hilbert $A$-module ${}_AE$ as a right Hilbert
$A^\op$ module $E^\op_{A^\op}$ with $e^\op \cdot a^\op = (a \cdot e)^\op$ and $(e^\op
\mid f^\op)_{A^\op} = \big({}_A(f \mid e)\big)^\op$, then the Cuntz--Pimsner algebra
$\Oo_{E^\op}$ is the Cuntz--Krieger algebra $O_{B^t}$ given by the transpose of the
matrix $B$, which is given by the graph $G^\op$ defined by reversing the edges of $G$.

\subsubsection{Twisted topological graphs}
\label{sub:twistedtop-graph}

The following construction is due to Li \cite{Li2014}. Suppose that $G = (G^0,G^1,r,s)$
is a topological graph. Let $N = \{ N_\alpha : \alpha \in \Lambda \}$ be an open cover of
$G^1$. Given $\alpha_1, \dots, \alpha_n \in \Lambda$, write $N_{\alpha_1\dots\alpha_n} =
\bigcap_{i=1}^n N_{\alpha_i}$. A collection of functions
\[
 S = \{s_{\alpha\beta} \in C(\overline{N_{\alpha\beta}}, \mathbb{T}) : \alpha,\beta\in \Lambda \}
\]
is called a \emph{$1$-cocycle relative to $N$} if $s_{\alpha\beta}s_{\beta\gamma} =
s_{\alpha\gamma}$ on $\overline{N_{\alpha\beta\gamma}}$.

Let
\[
 C_c(G,N,S) := \Big\{ x \in \prod_{\alpha\in\Lambda} C(\overline{N_\alpha}) :
        x_\alpha = s_{\alpha\beta} x_\beta \textrm{ on } \overline{N_{\alpha\beta}}
        \textrm{ and } \overline{x_\alpha}x_\alpha \in C_c(E^1) \Big\}
\]
For $x \in C_c(G, N, S)$ and $g \in G^1$, we write $x(g)$ for the tuple
$\big(x(g)_\alpha\big)_{\alpha \in \Lambda}$, with the convention that $x(g)_\alpha = 0$
when $g \not\in N_\alpha$. Choose for each $g \in E^1$ an element $\alpha(g)$ such that
$g \in N_{\alpha(g)}$; since the $s_{\alpha\beta}$ are circle valued, for $x,y \in C_c(G,
N, S)$, the map $g \mapsto \overline{x(g)_{\alpha(g)}}y(g)_{\alpha(g)}$ does not depend
on our choice of the assignment $g \mapsto \alpha(g)$. Now $C_c(G,N,S)$ becomes a
pre-right-Hilbert $C_0(G^0)$-module under the operations
\begin{align*}
 (x\cdot a)(g)_\alpha &= x(g)_\alpha a(s(g)), \\
 \pairR(x|y)_A(v) &= \sum_{s(g)=v} \overline{x(g)}_{\alpha(g)} y(g)_{\alpha(g)},\text{ and} \\
 (a\cdot x)(g)_\alpha &= a(r(g))x(g)_\alpha
\end{align*}
for $x,y \in C_c(G,N,S)$, $a\in A$, $\alpha\in\Lambda$ and $v\in G^0$. Theorem~3.3 of
\cite{Li2014} ensures that the completion $E(G,N,S)$ of $C_c(G,N,S)$ is a right-Hilbert
$A$-bimodule.

If $r : G^1 \to G^0$ is a local homeomorphism, then there is a left inner-product on
$E(G,N,S)$ satisfying
\[
 \pairL_A(x|y)(v) = \sum_{r(g)=v} x(g)_{\alpha(g)}\overline{y(g)}_{\alpha(g)},
\]
which again does not depend on our choice of assignment $g \mapsto \alpha(g)$. The right
action is adjointable for this left inner-product, and $E(G,N,S)$ is then a bi-Hilbertian
$A$-bimodule.

\section{A Kasparov module representing the extension class}

We now show how to represent the Kasparov class arising from the defining extension of a
Cuntz--Pimsner algebra of a bimodule. The easy case turns out to be the SMEB case, which
we treat first, since in this case we can also obtain more information
in the form of an unbounded representative of the
Kasparov module.

The SMEB case does not immediately show how to proceed in the general case: the dilation of
the representation-mod-compacts of $\Oo_E$ on the Fock module
to an actual representation of $\Oo_E$ is
easily achieved in the SMEB case by using a two-sided Fock module.

Utilising the extra information coming from the bi-Hilbertian bimodule structure  allows us to
handle the general case, by constructing an $A$ module with a representation of $\Oo_E$.

\subsection{The SMEB case}
\label{subsec:smeb}
The following theorem summarises the situation when $\Phi({\Id_E})=1_A$. The bounded
Kasparov module representing the extension in this case is implicit in Pimsner \cite{Pimsner}, and
numerous similar constructions of Kasparov modules associated to circle actions have
appeared in \cite{AKL,CNNR,MesCocyle,PaskRennie:JFA2006} amongst others.
Similar results for the unbounded Kasparov module were obtained in \cite{GG}.
The Fock module associated to $C^*$-bimodules $E$ over a
$C^*$-algebra $A$ is defined as
\[
F_E:=\bigoplus_{n\geq 0}E^{\ox n}
\]
with $E^{\otimes 0} := A$, where the internal product $E^{\ox n}$
is taken regarding $E$ as a right $A$-module with
a left action of $A$.
We let $\extcls$
denote the class of the extension
$$
0\to \End^0_A(F_E)\to T_E\to \Oo_E\to 0
$$
in $KK^1(\Oo_E, \End^0_A(E))$, and $[F_E]\in KK(\End^0_A(F_E),A)$ the class of the Morita
equivalence.

\begin{thm}
Let $E$ be a SMEB over $A$. For $\Z\ni n < 0$, define $E^{\ox n} := \overline{E}^{\ox |n|}$.
Let $F_{E, \Z}$ denote the Hilbert-bimodule direct sum
\[
F_{E,\Z}:=\bigoplus_{n\in\Z} E^{\ox n},
\]
and define an operator $N$ on the algebraic direct sum $\bigcup^\infty_{n=1}
\bigoplus^n_{m=-n} E^{\ox m} \subseteq F_{E, \Z}$ by $N\xi := n\xi$ for $\xi \in E^{\ox
n}$. There is a homomorphism $\rho : \Oo_E \to \End_A(F_{E, \Z})$ such that $\rho(s_e)\xi
= e \otimes \xi$ for all $e \in E$ and $\xi \in \bigcup E^{\otimes n}$.  The triple
\[
(\Oo_E,(F_{E,\Z})_A,N)
\]
is an unbounded Kasparov $\Oo_E$--$A$ module that represents the class
$\extcls\ox_{\End^0_A(F_E)}[F_E] \in KK^1(\Oo_E, A)$.
\end{thm}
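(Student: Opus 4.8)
The plan is to construct the homomorphism $\rho$ first, then verify the Kasparov-module axioms, and finally identify the class by passing to the bounded transform of $N$.

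\emph{Constructing $\rho$.} Since $E$ is a SMEB, the conjugate module $\overline{E}$ is an inverse for $E$ under the internal tensor product: the imprimitivity condition yields isomorphisms $E\ox_A\overline{E}\cong A\cong\overline{E}\ox_A E$, so that $E^{\ox m}\ox_A E^{\ox n}\cong E^{\ox(m+n)}$ for all $m,n\in\Z$ and $F_{E,\Z}$ is a $\Z$-graded Hilbert $A$-bimodule. For each $e\in E$ I would define the creation operator $T_e\colon E^{\ox n}\to E^{\ox(n+1)}$, $\xi\mapsto e\ox\xi$, check that it is adjointable on $F_{E,\Z}$ with adjoint the corresponding annihilation operator, and verify the Toeplitz relation $T_e^*T_f=\pairR(e|f)_A$. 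The essential point is the covariance relation $\sum_i T_{e_i}T_{e_i}^*=\Id$ for a frame $(e_i)$ of $E$: on each summand $E^{\ox(n+1)}$ this reduces to $\sum_i\Theta_{e_i,e_i}=\Id_E$ acting on the leading tensor factor, and because the grading runs over all of $\Z$ there is no vacuum subspace on which the relation fails. This is exactly where the SMEB hypothesis, equivalently $\Phi(\Id_E)=1_A$, is used. The universal property of $\Oo_E$ then produces $\rho$ with $\rho(s_e)=T_e$.

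\emph{The Kasparov-module axioms.} The module $(F_{E,\Z})_A$ is countably generated, and $N$ is self-adjoint and regular since it is diagonal for the orthogonal decomposition $F_{E,\Z}=\bigoplus_n E^{\ox n}$ with eigenvalue $n$ on $E^{\ox n}$. Writing $P_n$ for the projection onto $E^{\ox n}$, each $E^{\ox n}$ is finitely generated projective (as $A$ is unital, $\Id_E\in\End^0_A(E)$), so $P_n\in\End^0_A(F_{E,\Z})$; since $(1+n^2)^{-1/2}\to0$, the sum $(1+N^2)^{-1/2}=\sum_n(1+n^2)^{-1/2}P_n$ converges in norm to a compact endomorphism, whence $\rho(a)(1+N^2)^{-1/2}\in\End^0_A(F_{E,\Z})$ for every $a\in\Oo_E$. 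For the bounded-commutator condition, a direct computation gives $[N,\rho(s_e)]=\rho(s_e)$ and $[N,\rho(s_e)^*]=-\rho(s_e)^*$, so $N$ has bounded commutators with every element of the dense $*$-subalgebra generated by $\{s_e:e\in E\}$; indeed $N$ is the generator of the gauge action. This establishes that $(\Oo_E,(F_{E,\Z})_A,N)$ is an unbounded Kasparov module.

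\emph{Identifying the class.} This is the substantive step. Let $P_+$ be the projection onto the positive part $F_E^+:=\bigoplus_{n\ge0}E^{\ox n}$, which as a right $A$-module is exactly the Fock module $F_E$. I would first note that the bounded transform $F_N:=N(1+N^2)^{-1/2}$ differs from $2P_+-1$ by the norm-convergent sum of compacts $\sum_n\big(n(1+n^2)^{-1/2}-\operatorname{sgn}(n)\big)P_n$, so that $F_N-(2P_+-1)\in\End^0_A(F_{E,\Z})$ and the class is unchanged on replacing $F_N$ by $2P_+-1$. Next I would check that $[P_+,\rho(s_e)]$ is compact: since $\rho(s_e)$ raises degree by one, its only off-diagonal block relative to $P_+$ is the map $E^{\ox(-1)}\to E^{\ox0}=A$, which factors through finitely generated projective modules and is therefore compact. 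The compression $a\mapsto P_+\rho(a)P_+$ restricts on $F_E^+=F_E$ to the Fock representation of $\Tt_E$, so, modulo $\End^0_A(F_E)$, it is precisely the Busby invariant of the Toeplitz extension defining $\extcls$. Because the ideal $\End^0_A(F_E)$ has here been realised concretely as the compact endomorphisms of the \emph{right $A$-module} $F_E$, this Busby invariant computes the image of $\extcls$ under the Morita class $[F_E]\in KK(\End^0_A(F_E),A)$. Translating back through the standard dictionary between odd Kasparov modules with operator $2P_+-1$ and the Ext/Busby picture, we conclude that the class of $(\Oo_E,(F_{E,\Z})_A,N)$ is $\extcls\ox_{\End^0_A(F_E)}[F_E]$.

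The delicate part is this last step: making rigorous that compressing the two-sided representation to $F_E^+$, together with the realisation of the ideal as compacts on the right $A$-module $F_E$, reproduces $\extcls$ composed with the Morita equivalence. The main obstacle is the bookkeeping needed to align the $(2P_+-1)$-form of an odd Kasparov module with the Busby-invariant description of the extension class, and to confirm that the Morita equivalence $[F_E]$ is correctly accounted for by viewing $F_E$ as a right $A$-module throughout.
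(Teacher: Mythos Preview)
Your proposal is correct and follows the same overall strategy as the paper: build the representation on the two-sided Fock module using that $E\ox_A\overline{E}\cong A\cong\overline{E}\ox_A E$ for a SMEB, verify the unbounded Kasparov-module axioms, and identify the class by passing to the bounded transform $2P_+-1$ and recognising the compression to $P_+F_{E,\Z}\cong F_E$ as the Fock representation (a positive splitting of $q:\Tt_E\to\Oo_E$).

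The one notable difference is in how the Kasparov-module axioms are verified. The paper does not check regularity, compact resolvent, and bounded commutators by hand; instead it observes that the SMEB hypothesis forces the gauge-fixed-point algebra to be $A$ and the spectral subspaces of the gauge action to be full, and then invokes \cite[Proposition~2.9]{CNNR}, which packages exactly the kind of direct verification you wrote out into a general statement about number operators arising from circle actions with full spectral subspaces. Your approach is more elementary and self-contained; the paper's is shorter and situates the construction within a known framework. For the class identification, the two arguments coincide: the paper simply asserts (citing \cite[Section~7]{Kas}) that the bounded picture is governed by $P_+$, while you spell out why $F_N-(2P_+-1)$ and $[P_+,\rho(s_e)]$ are compact---which is precisely the content behind that citation.
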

\begin{proof}
If $E$ is a SMEB, then the conjugate module $\overline{E}$ is also a SMEB, and we have
$E \otimes_A \overline{E} \cong \End^0_A(E) \cong A$,
and similarly $\overline{E} \otimes_AE \cong A$. This shows that the coefficient algebra $A$ is the
fixed point algebra for the gauge action, and that the spectral subspaces for the
gauge action are full. Then by \cite[Proposition 2.9]{CNNR},
$(\Oo_E,(F_{E,\Z})_A,N)$ is an unbounded Kasparov module.

The corresponding bounded Kasparov module is determined by the non-negative spectral
projection of $N$, denoted $P_+$, \cite[Section 7]{Kas}. Since $P_+ F_{E, \Z}$ is
canonically isomorphic to $F_E$ and compression by $P_+$ implements a positive splitting
for the quotient map $q : \Tt_E \to \Oo_E$, we deduce that $(\Oo_E, F_{E, \Z}, 2P_+-1)$
represents $\extcls$, and hence $(\Oo_E, F_{E, \Z}, N)$ does too.
\end{proof}

\subsection{An operator-valued weight}

Our next goal is to construct a Kasparov module representing the extension class in the
case when $E$ is not a SMEB. To do so, we seek to dilate the Fock representation of
$\Tt_E$ to a representation of $\Oo_E$, but we cannot do this using the module $F_{E,\Z}$
above when $E$ is not a SMEB; the 2-sided direct sum  does not carry a
representation of $\Oo_E$ by translation operators. In \cite{Pimsner}, Pimsner
circumvents this problem by enlarging $E$ to a module $E_\infty$ over the core
$\Oo_E^\gamma$, and enlarging the Fock module accordingly. This has the disadvantage,
however, that the resulting exact sequence
$$
0 \to \End_{\Oo_E^\gamma}^0(F_{E_\infty})
\to \Tt_{E_\infty} \to \Oo_{E_\infty} \cong \Oo_E \to 0
$$
is very different from
the sequence $0 \to \End_A^0(F_E) \to \Tt_E \to \Oo_E \to 0$ in which we were originally
interested. For example, if $A = \C$ and $E = \C^2$, then $\End_A^0(F_E) \cong
\mathbb{K}$, whereas $\End_{\Oo_E^\gamma}(F_{E^\infty})$ is Morita equivalent to the UHF
algebra $M_{2^\infty}(\C)$.

In this subsection, we show how to dilate the Fock representation
without changing coefficients when $E$ is a finitely generated bi-Hilbertian bimodule
satisfying an analytic hypothesis, and present some examples of this situation.
This will require some set-up building on the tools developed in
Section~\ref{sec:bimods}. We construct the desired Kasparov module in
subsection \ref{subsec:kas-mod}.

Fix a bi-Hilbertian $A$-bimodule $E$, and let $\{e_i\}$ be a frame for the right module $E_A$. Given a
multi-index $\rho=(\rho_1,\dots,\rho_k)$ we write $e_\rho=e_{\rho_1}\ox\cdots\ox
e_{\rho_k}$ for the corresponding element of the natural frame of $E^{\ox k}$. We define
\begin{equation}
e^{\beta_k}=\sum_{|\rho|=k}\,\pairL_A(e_\rho|e_\rho)=\Phi_k(\Id_{E^{\ox k}}),
\label{eq:beta-k}
\end{equation}
and just write $e^\beta$ for $e^{\beta_1}$. Provided that $E_A$ is full and finitely generated,
$e^{\beta_k}$ is a positive, invertible and central element of $A$, \cite{KajPinWat}, so that $\beta_k\in
A$ is the logarithm of $\Phi_k(\Id_{E^{\ox k}})$. Since $E$ will always be clear from context,
this justifies our notation
\begin{equation}
\beta:=\log(\Phi({\rm Id}_E)),\qquad \beta_k:=\log(\Phi({\rm Id}_{E^{\otimes k}})).
\label{eq:log-beta}
\end{equation}
We write
$\rho=\underline{\rho}\overline{\rho}$ for the decomposition of a multi-index $\rho$ into
its initial and final segments, whose lengths will be clear from context. From the
formula
$$
\pairL_A(e_\rho|e_\rho)
    =\pairL_A(e_{\underline{\rho}}\,{\pairL_A(e_{\overline{\rho}}|e_{\overline{\rho}})}|e_{\underline{\rho}})
$$
we see that for $0\leq n\leq k$
$$
e^{\beta_k}
=\sum_{|\underline{\rho}|=k-n}\pairL_A(e_{\underline{\rho}} e^{\beta_{n}}|e_{\underline{\rho}})
\leq \Vert e^{\beta_n}\Vert\,e^{\beta_{k-n}}.
$$

\begin{lemma}
\label{lem:correct-beta1} Let $E$ be a finitely generated bi-Hilbertian $A$-bimodule and
for $k\geq0$, define $\Phi_k:\,\End_A(E^{\ox k})\to A$ by
\begin{equation}
\Phi_k(T):=\sum_{|\rho|=k}\pairL_A(Te_\rho|e_\rho),
\label{eq:op-valued-KMS1}
\end{equation}
where $\pairL_A(\cdot|\cdot)$ is the left $A$-valued inner product on $F_E$. For $n\leq
k$ and $\xi, \eta \in E^{\ox n}$ we have
$$
\Phi_k(\Theta_{\xi,\eta}\ox \Id_{k-n})
    = \pairL_A(\xi|\eta e^{\beta_{(k-|\eta|)}}).
$$
\end{lemma}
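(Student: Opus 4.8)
The plan is to prove the identity by a direct frame computation, using the tensor-product formula for the left inner product recorded in the display immediately preceding the lemma. First I would split each length-$k$ multi-index as $\rho=\underline{\rho}\,\overline{\rho}$ with $|\underline{\rho}|=n$ and $|\overline{\rho}|=k-n$, so that $e_\rho=e_{\underline{\rho}}\ox e_{\overline{\rho}}$ and $(e_{\underline{\rho}})_{|\underline{\rho}|=n}$, $(e_{\overline{\rho}})_{|\overline{\rho}|=k-n}$ are the induced frames of $E^{\ox n}$ and $E^{\ox(k-n)}$, with the two index ranges varying independently. Since $\Theta_{\xi,\eta}\ox\Id_{k-n}$ acts only on the first $n$ tensor legs and $\Theta_{\xi,\eta}(e_{\underline{\rho}})=\xi\,\pairR(\eta|e_{\underline{\rho}})_A$, we get $(\Theta_{\xi,\eta}\ox\Id_{k-n})e_\rho=(\xi\,\pairR(\eta|e_{\underline{\rho}})_A)\ox e_{\overline{\rho}}$.

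Next I would substitute this into the definition \eqref{eq:op-valued-KMS1} and apply the identity $\pairL_A(\zeta_1\ox\zeta_2|\zeta_1'\ox\zeta_2')=\pairL_A(\zeta_1\,{\pairL_A(\zeta_2|\zeta_2')}|\zeta_1')$ (the bilinear extension of the diagonal formula used before the lemma), rewriting each summand as
\[
\pairL_A((\xi\,{\pairR(\eta|e_{\underline{\rho}})_A})\,{\pairL_A(e_{\overline{\rho}}|e_{\overline{\rho}})}|e_{\underline{\rho}}).
\]
Summing over the free index $\overline{\rho}$, additivity of $\pairL_A$ and of the right action let me pull the sum inside and replace $\sum_{|\overline{\rho}|=k-n}\pairL_A(e_{\overline{\rho}}|e_{\overline{\rho}})$ by $e^{\beta_{k-n}}$ via \eqref{eq:beta-k}; this leaves $\sum_{|\underline{\rho}|=n}\pairL_A((\xi\,{\pairR(\eta|e_{\underline{\rho}})_A})\,e^{\beta_{k-n}}|e_{\underline{\rho}})$.

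Finally I would move $e^{\beta_{k-n}}$ onto $\eta$. Setting $b=\pairR(\eta|e_{\underline{\rho}})_A\,e^{\beta_{k-n}}$ and using the adjointability relation $\pairL_A(\zeta\,b|f)=\pairL_A(\zeta|f\,b^*)$ from Lemma~\ref{lem:Watatani+}, each summand becomes $\pairL_A(\xi|e_{\underline{\rho}}\,b^*)$. Because $e^{\beta_{k-n}}$ is positive, hence self-adjoint, and central in $A$, I can write $b^*=e^{\beta_{k-n}}\,\pairR(e_{\underline{\rho}}|\eta)_A$, and then $e_{\underline{\rho}}\,b^*=e_{\underline{\rho}}\,\pairR(e_{\underline{\rho}}|\eta\,e^{\beta_{k-n}})_A=\Theta_{e_{\underline{\rho}},e_{\underline{\rho}}}(\eta\,e^{\beta_{k-n}})$, where centrality is what lets me commute $e^{\beta_{k-n}}$ past $\pairR(e_{\underline{\rho}}|\eta)_A$ and right $A$-linearity of $\pairR(\cdot|\cdot)_A$ reabsorbs it into the second slot. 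Summing over $\underline{\rho}$ and invoking the frame identity $\sum_{|\underline{\rho}|=n}\Theta_{e_{\underline{\rho}},e_{\underline{\rho}}}=\Id_{E^{\ox n}}$ then yields $\pairL_A(\xi|\eta\,e^{\beta_{k-n}})$, which is the claim since $|\eta|=n$. As $E$ is finitely generated the frame is finite, so no convergence questions arise.

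I do not expect a genuine obstacle: the content is entirely bookkeeping. The two points that need care are getting the order of factors right in the tensor-product inner-product formula (it is the \emph{final} segment $\overline{\rho}$ whose left inner product feeds back into the left slot of the $\underline{\rho}$-inner product, matching the display before the lemma) and applying centrality of $e^{\beta_{k-n}}$ at exactly the right moment so that it can be repackaged into $\pairR(e_{\underline{\rho}}|\eta\,e^{\beta_{k-n}})_A$ before the frame relation is used.
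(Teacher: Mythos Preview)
Your proposal is correct and follows essentially the same route as the paper: split $\rho=\underline{\rho}\,\overline{\rho}$, apply the tensor-product formula for the left inner product, sum over $\overline{\rho}$ to produce $e^{\beta_{k-n}}$, then use centrality and adjointability of the right $A$-action to transfer this factor and the remaining coefficient to the second slot before invoking the frame identity $\sum_{|\underline{\rho}|=n}\Theta_{e_{\underline{\rho}},e_{\underline{\rho}}}=\Id$. The only cosmetic difference is the order in which centrality and adjointability are invoked---the paper commutes $e^{\beta_{k-n}}$ past $\pairR(\eta|e_{\underline{\rho}})_A$ in the first slot and then applies adjointability twice, whereas you move the entire product $b$ across in one step and then unpack $b^*$ on the right---but the argument is the same.
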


\begin{proof}
We calculate, using centrality of $e^{\beta_k}$ in $A$ at the fifth step,
\begin{align*}
\Phi_k(\Theta_{\xi,\eta}\ox \Id_{k-n})&=\sum_{|\rho|=k}\,\pairL_A(\Theta_{\xi,\eta}e_\rho|e_\rho)
=\sum_{|\rho|=k}\,\pairL_A(\xi\cdot{\pairR(\eta|e_{\underline{\rho}})_A}\,e_{\overline{\rho}}|e_\rho)\\
&=\sum_{|\rho|=k}\,\pairL_A(\xi\cdot{\pairR(\eta|e_{\underline{\rho}})_A}\,
{\pairL_A(e_{\overline{\rho}}|e_{\overline{\rho}})}|e_{\underline{\rho}})
=\sum_{|\underline{\rho}|=|\eta|}\,\pairL_A(\xi\cdot{\pairR(\eta|e_{\underline{\rho}})_A}\,
e^{\beta_{(k-|\eta|)}}|e_{\underline{\rho}})\\
&=\sum_{|\underline{\rho}|=|\eta|}\,\pairL_A(\xi\cdot e^{\beta_{(k-|\eta|)}}{\pairR(\eta|e_{\underline{\rho}})_A}
|e_{\underline{\rho}})
=\sum_{|\underline{\rho}|=|\eta|}\,\pairL_A(\xi\cdot e^{\beta_{(k-|\eta|)}}|
e_{\underline{\rho}}\cdot{\pairR(e_{\underline{\rho}}|\eta)_A})\\
&=\pairL_A(\xi\cdot e^{\beta_{(k-|\eta|)}}|\eta)
=\pairL_A(\xi|\eta\cdot e^{\beta_{(k-|\eta|)}}).\qedhere
\end{align*}
\end{proof}

\begin{lemma}
\label{lem:Phiinfty}
Let $E$ be a finitely generated bi-Hilbertian $A$-bimodule, and for each $k\geq 0$, let
$\Phi_k :\End^0(E^{\otimes k}) \to A$  be the positive map of Lemma~2.4(1).
For $0 \le T \in \Tt_E$,
and for $\Re(s) > 1$, the series
\begin{equation}\label{eq:Phi s}
\sum^\infty_{k=0} \Phi_k(T)e^{-\beta_k}(1 + k^2)^{-s/2}
\end{equation}
converges to an element $\Phi^s_\infty(T)$ of $A$ which is positive for $s$ real.
\end{lemma}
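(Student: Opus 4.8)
The plan is to interpret each $\Phi_k(T)$ as $\Phi_k$ applied to the diagonal compression of $T$ to $E^{\ox k}$, and then to bound the normalised terms uniformly so that the series is dominated by a convergent Dirichlet series. Write $P_k$ for the orthogonal projection of $F_E$ onto the summand $E^{\ox k}$. Since $T \in \Tt_E \subseteq \End_A(F_E)$ is adjointable and $P_k \in \End_A(F_E)$ is a projection, the compression $P_k T P_k$ restricts to an adjointable operator on $E^{\ox k}$; because $E$ is finitely generated, $E^{\ox k}$ is finitely generated projective, so $\End_A(E^{\ox k}) = \End^0_A(E^{\ox k})$ and $P_k T P_k$ lies in the domain of $\Phi_k$. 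Thus each summand $\Phi_k(P_k T P_k)e^{-\beta_k}(1+k^2)^{-s/2}$ is a well-defined element of $A$.

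The key estimate comes from positivity and $A$-bilinearity of $\Phi_k$ together with the identity $\Phi_k(\Id_{E^{\ox k}}) = e^{\beta_k}$ from \eqref{eq:beta-k}. For $0 \le T \in \Tt_E$ we have $\|P_k T P_k\| \le \|T\|$, whence $0 \le P_k T P_k \le \|T\|\,\Id_{E^{\ox k}}$ as operators on $E^{\ox k}$, and applying the positive $A$-linear map $\Phi_k$ gives
\[
0 \le \Phi_k(P_k T P_k) \le \|T\|\,\Phi_k(\Id_{E^{\ox k}}) = \|T\|\,e^{\beta_k}.
\]
Since $e^{\beta_k}$ is positive, invertible and \emph{central} in $A$, the element $e^{-\beta_k/2}$ is central, so
\[
\Phi_k(P_k T P_k)\,e^{-\beta_k} = e^{-\beta_k/2}\,\Phi_k(P_k T P_k)\,e^{-\beta_k/2}
\]
is positive, and conjugating the displayed inequality by $e^{-\beta_k/2}$ yields $0 \le \Phi_k(P_k T P_k)e^{-\beta_k} \le \|T\|\,1_A$. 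In particular $\big\|\Phi_k(P_k T P_k)e^{-\beta_k}\big\| \le \|T\|$ for every $k$.

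With this uniform bound in hand, convergence is a comparison argument: the $k$-th term of \eqref{eq:Phi s} has norm at most $\|T\|\,(1+k^2)^{-\Re(s)/2}$, and since $(1+k^2)^{-\Re(s)/2}$ is comparable to $k^{-\Re(s)}$, the series $\sum_k (1+k^2)^{-\Re(s)/2}$ converges precisely when $\Re(s) > 1$. Hence \eqref{eq:Phi s} converges absolutely in norm to an element $\Phi^s_\infty(T) \in A$. For positivity, note that when $s$ is real each scalar $(1+k^2)^{-s/2}$ is positive and each $\Phi_k(P_k T P_k)e^{-\beta_k}$ is positive, so every partial sum is positive; positivity is preserved under the norm limit, giving $\Phi^s_\infty(T) \ge 0$.

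The routine content is the comparison test; the only genuinely delicate points are administrative rather than analytic — confirming that the compression $P_k T P_k$ is compact, which is where finite generation of $E$ is used, and exploiting centrality of $e^{\beta_k}$ to convert the one-sided product $\Phi_k(\cdot)e^{-\beta_k}$ into a self-adjoint conjugation. Once the exact cancellation $\Phi_k(\Id_{E^{\ox k}})e^{-\beta_k} = 1_A$ is recognised, the uniform bound $\|T\|$ is immediate and the potentially unbounded growth of $e^{\beta_k}$ disappears.
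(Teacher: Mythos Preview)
Your proof is correct and follows essentially the same approach as the paper: compress $T$ to $E^{\ox k}$, use $0 \le P_kTP_k \le \|T\|\,\Id_{E^{\ox k}}$ together with $\Phi_k(\Id_{E^{\ox k}}) = e^{\beta_k}$ to obtain the uniform bound $\Phi_k(T)e^{-\beta_k} \le \|T\|\,1_A$, and then compare with the scalar Dirichlet series. Your version is in fact more complete than the paper's, since you explicitly justify why $P_kTP_k$ is compact (via finite generation) and, more importantly, why the one-sided product $\Phi_k(P_kTP_k)e^{-\beta_k}$ is positive by rewriting it as the conjugation $e^{-\beta_k/2}\Phi_k(P_kTP_k)e^{-\beta_k/2}$ using centrality---the paper asserts positivity in the statement but does not spell this step out.
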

\begin{proof}
By definition, we have $\Phi_k(\Id_{E^{\ox k}})e^{-\beta_k}=1_A$. Thus for $\Re(s)>1$,
the series
$$
\sum_{k=0}^\infty \Phi_k(\Id_{E^{\ox k}})e^{-\beta_k}(1+k^2)^{-s/2}
    = \Big(\sum^\infty_{k=0} (1 + k^2)^{-s/2}\Big) 1_A
$$
converges in norm. The function $s \mapsto \sum \Phi_k(\Id_{E^{\otimes k}})e^{-\beta k}
(1 + k^2)^{-s/2}$ has well-defined residue $1_A$ at $s = 1$. Since $\Tt_E$ can be
regarded as a subalgebra of $\End_A(F_E)$, the formula~\eqref{eq:op-valued-KMS1} makes
sense for $T \in \Tt_E$. Indeed, if $P_k : F_E \to E^{\otimes k}$ is the projection, then
$P_k \End_A(F_E) P_k \cong \End_A(E^{\otimes k})$, and then
$\Phi_k(T) = \Phi_k(P_k TP_k)$ for all $T \in \Tt_E$.

For $0\leq T\in \Tt_E$, the inequality $T\leq \Vert T\Vert \Id_{\Tt_E}$ shows that
\begin{equation}
\Phi_k(T)e^{-\beta_k}\leq \Vert T\Vert\,1_A.
\label{eq:beta-inequality}
\end{equation}
So for $s \in \C$ with $\Re(s) > 1$, the series~\eqref{eq:Phi s} converges in norm.
\end{proof}

We now construct an $A$-valued map on $\Tt_E$ by taking the residue at $s = 1$ of the map
$\Phi^s_{\infty}$ of Lemma~\ref{lem:Phiinfty}, and then show that this
residue functional factors through
$\Oo_E$.

Recall that, given sequences $(x_n),\,(y_n)$ of real numbers, we write
$x_n \in O(y_n)$ if there is a constant $C$ such that $x_n \leq
Cy_n$ for large $n$.

\begin{lemma}
Let $E$ be a finitely generated bi-Hilbertian $A$-bimodule, take $\eta \in
E^{\ox n}$ and suppose that the sequence $\big(e^{-\beta_k}\eta
e^{\beta_{(k-|\eta|)}}\big)_k$ converges; write $\tilde\eta$ for its limit. Suppose
that there exists $\delta>0$ such that
$$
\Vert e^{-\beta_k}\eta e^{\beta_{(k-|\eta|)}}-\tilde\eta\Vert \in O(k^{-\delta}).
$$
For $\xi\in F_E$ the function $s \mapsto \Phi^s_\infty(T_\xi T^*_\eta)$ has a
well-defined residue $\Phi_\infty(T_\xi T^*_\eta)$ at $s = 1$, and we have
$\Phi_\infty(T_\xi T^*_\eta) = \pairL_A(\xi|\tilde\eta)$, where the inner-product is
taken in $F_E$.
\end{lemma}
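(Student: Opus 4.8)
The plan is to expand $\Phi^s_\infty(T_\xi T^*_\eta)$ as an explicit Dirichlet series in $s$, identify its coefficients via Lemma~\ref{lem:correct-beta1}, and then extract the residue at $s=1$ by peeling off a scalar multiple of the series $\sum_k (1+k^2)^{-s/2}$, whose residue is already known from Lemma~\ref{lem:Phiinfty}. First I would reduce to the case $\xi \in E^{\ox n}$, where $n=|\eta|$. Writing $\xi = \sum_m \xi_m$ for the homogeneous decomposition in $F_E$, the operator $P_k T_{\xi_m} T^*_\eta P_k$ sends $E^{\ox k}$ into $E^{\ox(k-n+m)}$, so it vanishes unless $m = n$; hence $\Phi_k(T_\xi T^*_\eta) = \Phi_k(T_{\xi_n}T^*_\eta)$ for every $k$. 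Since distinct summands of $F_E$ are orthogonal for $\pairL_A(\cdot|\cdot)$ and $\tilde\eta \in E^{\ox n}$, we likewise have $\pairL_A(\xi|\tilde\eta) = \pairL_A(\xi_n|\tilde\eta)$, so both sides of the claimed identity only see the degree-$n$ component and we may assume $\xi \in E^{\ox n}$. For such $\xi$ the term $\Phi_k(T_\xi T^*_\eta)$ vanishes for $k < n$, while for $k \geq n$ the compression $P_k T_\xi T^*_\eta P_k$ equals $\Theta_{\xi,\eta}\ox\Id_{k-n}$, so Lemma~\ref{lem:correct-beta1} gives $\Phi_k(T_\xi T^*_\eta) = \pairL_A(\xi|\eta e^{\beta_{k-n}})$.

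Next I would simplify the coefficient $a_k := \Phi_k(T_\xi T^*_\eta)e^{-\beta_k}$. Using that $e^{-\beta_k}$ is central and self-adjoint, together with the fact that the left inner product is conjugate-linear in its second variable, I obtain
$$
a_k = \pairL_A(\xi|\eta e^{\beta_{k-n}})e^{-\beta_k} = \pairL_A(\xi|e^{-\beta_k}\eta e^{\beta_{k-n}}).
$$
Continuity of the inner product then yields $a_k \to a_\infty := \pairL_A(\xi|\tilde\eta)$, and the Cauchy--Schwarz inequality together with the standing hypothesis gives the quantitative estimate $\|a_k - a_\infty\| \leq \|\xi\|\,\|e^{-\beta_k}\eta e^{\beta_{k-n}} - \tilde\eta\| \in O(k^{-\delta})$.

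Finally I would extract the residue by the splitting $a_k = a_\infty + (a_k - a_\infty)$, which gives
$$
\Phi^s_\infty(T_\xi T^*_\eta) = a_\infty \sum_{k \geq n}(1+k^2)^{-s/2} + \sum_{k \geq n}(a_k - a_\infty)(1+k^2)^{-s/2}.
$$
The first series differs from $\sum_{k\geq 0}(1+k^2)^{-s/2}$ by finitely many entire terms, so by Lemma~\ref{lem:Phiinfty} it has residue $1$ at $s=1$ and contributes $a_\infty$. For the second series the bound above gives $\|(a_k - a_\infty)(1+k^2)^{-s/2}\| \in O(k^{-(\Re(s)+\delta)})$, so it converges absolutely and locally uniformly on $\Re(s) > 1 - \delta$; being holomorphic in a neighbourhood of $s=1$, it contributes nothing to the residue. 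Hence the residue exists and equals $a_\infty = \pairL_A(\xi|\tilde\eta)$.

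The main obstacle is this last residue extraction: one must verify that the tail Dirichlet series, whose coefficients decay like $O(k^{-\delta})$, genuinely extends holomorphically past $s=1$, since this is exactly what upgrades the pointwise convergence $a_k \to a_\infty$ to the residue identity, and it is where the quantitative hypothesis on the convergence rate is indispensable. By comparison, the degree-matching bookkeeping that collapses $T_\xi T^*_\eta$ to its degree-$n$ contribution is routine, but it needs care to keep the two sides of the claimed identity aligned.
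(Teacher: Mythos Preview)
Your proof is correct and follows essentially the same route as the paper's: identify $\Phi_k(T_\xi T^*_\eta)e^{-\beta_k}$ with $\pairL_A(\xi|e^{-\beta_k}\eta e^{\beta_{k-|\eta|}})$ (the paper redoes this expansion in place, whereas you invoke Lemma~\ref{lem:correct-beta1}), then use the $O(k^{-\delta})$ hypothesis to peel off the constant limit and show the remainder contributes no residue. Your residue-extraction step is in fact spelled out more carefully than in the paper, which simply asserts the conclusion from the estimate $\|\Phi_k(T_\xi T^*_\eta)e^{-\beta_k} - \pairL_A(\xi|\tilde\eta)\| \in O(k^{-\delta})$.
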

\begin{proof}
For $k > |\eta|$, and for a multi-index $\rho = \underline{\rho}\overline{\rho}$ of
length $k$ with $|\underline{\rho}| = |\eta|$, we have $T_\xi T^*_\eta e_\rho =
\Theta_{\xi,\eta}(e_{\underline{\rho}})\otimes e_{\overline{\rho}}$. So for $k > |\eta|$,
\begin{align}
\Phi_k(T_\xi T_\eta^*)
    &=\delta_{|\xi|,|\eta|} \sum_{|\rho| = k}
        \pairL_A(\Theta_{\xi,\eta}(e_{\underline{\rho}})\otimes e_{\overline{\rho}}
            | e_\rho) \nonumber\\
    &=\delta_{|\xi|,|\eta|} \sum_{|\rho| = k}
        \pairL_A(\Theta_{\xi,\eta}(e_{\underline{\rho}})
                    \cdot {\pairL_A(e_{\overline{\rho}} | e_{\overline{\rho}})}
                 | e_{\underline{\rho}}) \nonumber\\
    &= \delta_{|\xi|,|\eta|} \sum_{\underline{\rho}}
        \pairL_A(\Theta_{\xi,\eta}(e_{\underline{\rho}})
                    \cdot e^{-\beta_{k-|\eta|}} | e_{\underline{\rho}}).\label{eq:stop}
\end{align}
Since $e^{-\beta_{k - |\eta|}}$ is central and self-adjoint, we have
\[
\Theta_{\xi,\eta}(e_{\underline{\rho}}) \cdot e^{-\beta_{k-|\eta|}}
    = \xi \cdot \pairR(\eta|e_{\underline{\rho}})_A e^{-\beta_{k-|\eta|}}
    = \xi \cdot e^{-\beta_{k-|\eta|}}\pairR(\eta|e_{\underline{\rho}})_A
    = \xi \cdot \pairR(\eta\cdot e^{-\beta_{k-|\eta|}}|e_{\underline{\rho}})_A.
\]
So~\eqref{eq:stop} gives
\[
\Phi_k(T_\xi T_\eta^*)
    = \delta_{|\xi|,|\eta|} \Phi_{|\eta|}(\Theta_{\xi,\eta\cdot e^{\beta_{(k-|\eta|)}}})
    = \delta_{|\xi|,|\eta|} \pairL_A(\xi|\eta\cdot e^{\beta_{(k-|\eta|)}}).
\]
Since the summands of $F_E$ are, by definition, mutually orthogonal in its inner product,
we deduce that for any $\xi,\eta$ and $k$, we have
\begin{align*}
\Phi_k(T_\xi T_\eta^*)e^{-\beta_k}
    &=\chi_{\{1, \dots, k\}}(|\eta|)
        \pairL_A(\xi|\eta\cdot e^{\beta_{(k-|\eta|)}})e^{-\beta_k}\\
    &=\chi_{\{1, \dots, k\}}(|\eta|)
        \pairL_A(\xi|\tilde\eta)
        + \pairL_A(\xi|e^{-\beta_k}\cdot\eta\cdot e^{\beta_{(k-|\eta|)}}-\tilde\eta),
\end{align*}
and so $\|\Phi_k(T_\xi T^*_\eta)e^{-\beta k} - \pairL_A(\xi|\tilde\eta)\| \in
O(k^{-\delta})$. In particular, $\res_{s=1} \Phi^s_\infty(T_\xi T^*_\eta)$ exists and is
equal to $\pairL_A(\xi|\tilde\eta)$ as claimed.
\end{proof}

\begin{prop}
\label{prop:Phi-infty} Let $E$ be a finitely generated bi-Hilbertian $A$-bimodule.
Suppose that for every $\eta\in F_E$ the limit $\tilde\eta:=\lim_{k\to\infty}
e^{-\beta_k}\eta e^{\beta_{(k-|\eta|)}}$ exists and that for each $\eta$ there is a
$\delta$ such that
$$
\Vert e^{-\beta_k}\eta e^{\beta_{(k-|\eta|)}}-\tilde\eta\Vert \in O(k^{-\delta}).
$$
Then there is a conditional expectation $\Phi_\infty : \Tt_E \to A$ such that
\[
\Phi_\infty(T_\xi T^*_\eta) = \res_{s=1} \Phi^s_\infty(T_\xi T^*_\eta),
\]
and this $\Phi_\infty$ descends to a well-defined functional $\Phi_\infty:\Oo_E\to A$.
\end{prop}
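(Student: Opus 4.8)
The plan is to define $\Phi_\infty$ first on a dense $*$-subalgebra, establish a uniform bound that lets me extend it by continuity, and then verify it is a conditional expectation annihilating the covariance ideal. Recall that $\Tt_E = \clsp\{T_\xi T^*_\eta : \xi,\eta\in F_E\}$, and write $\mathcal{A}_0$ for the dense $*$-subalgebra of finite linear combinations of such monomials with $\xi,\eta$ in the algebraic Fock module. By the preceding lemma, under the stated hypotheses $\res_{s=1}\Phi^s_\infty(T_\xi T^*_\eta)$ exists and equals $\pairL_A(\xi|\tilde\eta)$, so by linearity this prescribes $\Phi_\infty$ on $\mathcal{A}_0$. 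Moreover, the $O(k^{-\delta})$ estimate in that lemma gives $\Phi^s_\infty(T_\xi T^*_\eta) = \pairL_A(\xi|\tilde\eta)\big(\sum_k(1+k^2)^{-s/2}\big) + h(s)$ with $h$ holomorphic near $s=1$; hence the pole at $s=1$ is simple and $\res_{s=1}\Phi^s_\infty(T) = \lim_{s\to 1^+}(s-1)\Phi^s_\infty(T)$ for every $T\in\mathcal{A}_0$.

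For the extension I would work with the rescaled maps $\Psi_s := (s-1)\Phi^s_\infty$ for real $s\in(1,2]$. Each $\Phi^s_\infty$ is positive and $A$-bilinear (positivity and norm-convergence from Lemma~\ref{lem:Phiinfty}, bilinearity from that of each $\Phi_k$ together with centrality of $e^{-\beta_k}$). Since $\Phi^s_\infty(\Id_{F_E}) = \big(\sum_k(1+k^2)^{-s/2}\big)1_A$ and a positive map of a unital $C^*$-algebra attains its norm at the unit, $\|\Psi_s\| = (s-1)\sum_k(1+k^2)^{-s/2}$, which tends to $1$ as $s\to 1^+$ and is therefore uniformly bounded on $(1,2]$. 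A standard three-$\epsilon$ argument then shows $\lim_{s\to 1^+}\Psi_s(T)$ is Cauchy, hence exists, for every $T\in\Tt_E$: approximate $T$ by some $T_0\in\mathcal{A}_0$, bound the two outer terms by uniform boundedness, and control the middle term by convergence on $\mathcal{A}_0$. This defines a bounded, positive, $A$-bilinear map $\Phi_\infty:\Tt_E\to A$ agreeing with the residue on $\mathcal{A}_0$.

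To see that $\Phi_\infty$ is a conditional expectation I would compute $\Phi_\infty|_A$: for $a\in A = E^{\ox 0}$ we have $a = T_a T^*_{1}$ with $|\eta|=0$, so $\tilde 1 = \lim_k e^{-\beta_k}\cdot 1\cdot e^{\beta_k} = 1$ and $\Phi_\infty(a) = \pairL_A(a|1) = a$. Thus $\Phi_\infty$ is an idempotent onto $A$; combined with $\|\Phi_\infty\|\le\lim_{s\to1^+}\|\Psi_s\| = 1$ and $\Phi_\infty(1)=1_A$ this gives $\|\Phi_\infty\|=1$, and Tomiyama's theorem identifies $\Phi_\infty$ as a conditional expectation onto $A$.

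Finally, for the descent to $\Oo_E$ I would show $\Phi_\infty$ annihilates the covariance ideal, which for these full, injective, finitely generated modules is exactly $\End^0_A(F_E) = \ker(q\colon\Tt_E\to\Oo_E)$; it suffices to treat the dense subalgebra $\End^{00}_A(F_E)$. Any finite-rank operator acts nontrivially on only finitely many summands $E^{\ox k}$, so $\Phi_k$ of it vanishes for all but finitely many $k$, whence $\Phi^s_\infty$ of such an operator is a finite sum of terms $c_k(1+k^2)^{-s/2}$, an entire function of $s$ with residue $0$ at $s=1$. (Concretely, for $P_0 = \Id_{F_E} - \sum_i T_{e_i}T^*_{e_i}$ one gets $\Phi^s_\infty(P_0)\equiv 1$, residue $0$, using $\beta_0=0$.) By continuity $\Phi_\infty$ vanishes on all of $\End^0_A(F_E)$, so it factors through $q$ and yields the asserted $\Phi_\infty:\Oo_E\to A$. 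I expect the main obstacle to be the uniform-boundedness-plus-density extension of the second paragraph — in particular pinning down $\|\Psi_s\|$ via positivity and the behaviour of $(s-1)\sum_k(1+k^2)^{-s/2}$ at $s=1$ — whereas the vanishing on the ideal is comparatively routine once one notes that finite-rank operators produce entire functions of $s$.
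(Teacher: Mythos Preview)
Your proof is correct and follows the same overall strategy as the paper: establish the residue on the dense span of monomials via the preceding lemma, bound it using positivity of $\Phi^s_\infty$ and $\Phi^s_\infty(1)=\big(\sum_k(1+k^2)^{-s/2}\big)1_A$, extend by continuity to $\Tt_E$, and then show vanishing on the covariance ideal. Your execution differs in two minor but pleasant ways: you obtain the conditional-expectation property via Tomiyama's theorem (after checking $\Phi_\infty|_A=\id$ and $\|\Phi_\infty\|=1$) rather than verifying positivity, idempotence and $A$-bilinearity by hand as the paper does; and for the descent you observe that any $T\in\End^{00}_A(F_E)$ has $\Phi_k(T)=0$ for all but finitely many $k$, so $\Phi^s_\infty(T)$ is entire with zero residue, whereas the paper instead computes directly that $\Phi_\infty\big(a-\sum_j T_{e_j}T^*_{e_j}a\big)=0$ on generators of the ideal. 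The two arguments are equivalent in substance.
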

\begin{proof}
For $T\in \End^0_A(E^{\otimes k})$ self-adjoint, we have
$$
\Phi_k(T)e^{-\beta_k} = \Phi_k(P_k T
P_k)e^{-\beta_k} \le \Phi_k(\|T\|P_k) e^{-\beta_k} = \|T\|.
$$
So for a self-adjoint
finite sum $\sum_i T_{\xi_i} T^*_{\eta_i}$, $\|\sum_i\Phi^s_\infty(T_{\xi_i}
T^*_{\eta_i})\| \le \|\sum_i T_{\xi_i} T^*_{\eta_i}\| \sum^\infty_{k = 0}(1 +
k^2)^{-s/2}$. Since every $T$ can be expressed as a sum of two self-adjoints, we deduce
that $\Phi_\infty$ is bounded on $\lsp\{T_\xi T^*_\eta : \xi,\eta \in F_E\}$. It
follows that $\Phi_\infty$ extends by linearity to a bounded linear map on $\lsp\{T_\xi
T^*_\eta : \xi,\eta \in F_E\}$, and so extends to $\Tt_E$.

It is routine to check from the defining formula that $\Phi_\infty$ is positive,
idempotent and $A$-linear.

For the last assertion, we compute:
\begin{align*}
\Phi_\infty(a-\sum_j\Theta_{e_j,e_j}a)
&=\res_{s=1}\sum_{k=0}^\infty
(a-\sum_j\pairL_A(e_j|e_j\cdot e^{\beta_{(k-1)}})e^{-\beta_k}a)(1+k^2)^{-s/2}\\
&=\res_{s=1}\sum_{k=0}^\infty (a-e^{\beta_{k}}e^{-\beta_k}\cdot a)(1+k^2)^{-s/2}\\
&=\res_{s=1}\sum_{k=0}^\infty (a-a)(1+k^2)^{-s/2}=0.
\end{align*}
Hence $\Phi_\infty$ vanishes on the covariance ideal, and so descends to the
quotient $\Oo_E$.
\end{proof}

\begin{exm}[Cuntz algebras] Fix $N \geq 1$. Let $E$ be the Hilbert space
$\mathbb{C}^N = \opsp\{e_i : 1\leq i \leq N \}$, which is a bi-Hilbertian $\C$-bimodule
in the obvious way. Then $\Oo_E \cong \Oo_N$. We have
$e^{\beta_k} = N^k$ for $k\geq 1$. If $\eta\in E^{\otimes n}$ and $k \geq n$ then
\[
 e^{-\beta_k} \cdot\eta\cdot e^{\beta_{k-n}} = N^{-k} \eta N^{k-n} = N^{-n} \eta
\]
and so the hypotheses of Proposition~\ref{prop:Phi-infty} are satisfied and $\Phi_\infty$
exists. In fact, we have
\[
 \Phi_\infty(S_\eta S_\zeta^*) = \frac{1}{N^{|\eta|}}\delta_{\eta,\zeta},
\]
and $\Phi_\infty$ is the usual KMS state for the gauge action on $\Oo_N$.
\end{exm}

For the next example, we need to recall a bit of Perron--Frobenius theory and state an
elementary lemma. If $A$ is a primitive nonnegative matrix, then the Perron--Frobenius
theorem (see, for example \cite[Chapter~8]{Meyer}) says that $A$ has a unique eigenvector
$x$ with nonnegative entries and unit $2$-norm. The entries of $x$ are in fact all
strictly positive, and $A x = \sr(A) x$ where $\sr(A)$ is the spectral radius of $A$. (We
avoid the usual notation, $\rho(A)$, for the spectral radius because the symbol $\rho$ is
used extensively as a multi-index elsewhere in the paper.) The sequence $\sr(A)^{-k} A^k$
converges in norm to the rank-one projection $P$ onto $\C x$, which commutes with $A$.
The following elementary lemma describes the rate of convergence of this sequence.
\begin{lemma}
\label{lem:rate} Let $A$ be a primitive nonnegative matrix, $x$ its $2$-norm-unimodular
Perron--Frobenius eigenvector, and $P$ the projection onto $\C x$. Then there exist  $C >
0$ and $\alpha < 1$ such that $\|\sr(A)^{-k} A^k - P\| \le C \alpha^k$ for all $k$.
\end{lemma}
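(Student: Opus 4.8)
The plan is to exploit the spectral decomposition of a primitive nonnegative matrix into its Perron--Frobenius eigenvalue and the rest of its spectrum. First I would normalise by setting $B := \sr(A)^{-1} A$, so that $B$ has spectral radius $1$ and $P$ becomes the spectral projection associated to the eigenvalue $1$; it suffices to show $\|B^k - P\| \le C\alpha^k$ for the normalised matrix. The key structural fact, coming from the Perron--Frobenius theorem for primitive matrices, is that $1$ is a \emph{simple} eigenvalue of $B$ and that every other eigenvalue $\lambda$ of $B$ satisfies $|\lambda| < 1$ strictly. This strict inequality is exactly what the primitivity hypothesis buys us (as opposed to mere irreducibility, where peripheral eigenvalues on the unit circle can occur), and it is the engine of the whole estimate.

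Next I would pass to the Jordan decomposition of $B$. Write $B = P \oplus N$ where $P$ is the rank-one projection onto $\C x$ (the eigenspace for $1$), and $N$ is the restriction of $B$ to the complementary $B$-invariant subspace on which all eigenvalues have modulus strictly less than $1$. Since $P$ commutes with $B$ and $PB = BP = P$, we get the clean identity $B^k = P + N^k$ for all $k \ge 1$, whence $\|B^k - P\| = \|N^k\|$. The problem is thereby reduced to bounding the norm of the powers of a matrix $N$ whose spectral radius $\sr(N)$ is some number $r < 1$.

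The final step is the standard estimate on powers of a matrix with spectral radius below $1$. By the spectral radius formula, $\|N^k\|^{1/k} \to \sr(N) = r < 1$, so for any chosen $\alpha$ with $r < \alpha < 1$ we have $\|N^k\| \le \alpha^k$ for all sufficiently large $k$; absorbing the finitely many exceptional initial terms into the constant $C$ yields $\|N^k\| \le C\alpha^k$ for all $k$. (Alternatively one can make this explicit via the Jordan form: each Jordan block of $N$ with eigenvalue $\lambda$ contributes terms of the form $\binom{k}{j}\lambda^{k-j}$, and since polynomial factors $\binom{k}{j}$ are dominated by $(\alpha/|\lambda|)^k$ for large $k$, the same bound follows.) Translating back through $N^k = B^k - P$ and then rescaling by $\sr(A)^{-k}$ gives the claimed inequality $\|\sr(A)^{-k} A^k - P\| \le C\alpha^k$.

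The main obstacle is really just invoking the correct form of Perron--Frobenius: one must be careful to use \emph{primitivity} rather than irreducibility, since it is precisely primitivity that forces all non-Perron eigenvalues to lie strictly inside the unit disc, and this strict gap is what makes the geometric rate $\alpha < 1$ available. Once that spectral-gap fact is in hand, the remaining arguments are routine linear algebra, and no delicate estimation is needed beyond the elementary domination of polynomial growth by geometric decay.
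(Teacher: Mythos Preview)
Your proof is correct and follows essentially the same route as the paper's. Both arguments decompose $A$ (or your normalised $B$) along the commuting projection $P$, reduce to bounding powers of the restriction to $\operatorname{ran}(I-P)$, invoke Perron--Frobenius for primitive matrices to get the strict spectral gap, and finish with the spectral-radius formula; the only cosmetic differences are your preliminary normalisation $B = \sr(A)^{-1}A$ and that the paper spells out the constant $C$ by fixing an $l$ with $\|(I-P)A^l(I-P)\| < \sr(A)^l$ and handling residues modulo $l$, whereas you absorb the finitely many initial terms directly.
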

\begin{proof}
Since $P$ commutes with $A$, we have $A^k = PA^kP +
(1-P)A^k(1-P)=(PAP)^k+((1-P)A(1-P))^k$ for all $k$. Since $PA^kP = \sr(A)^k P$ for all
$k$, we then have $\sr(A)^{-k} A^k - P = \sr(A)^{-k}(1-P)A^k(1-P)$. So $\|\sr(A)^{-k}A^k
- P\| = \sr(A)^{-k}\|(1-P)A^k(1-P)\|$ for all $k$. Let $\lambda := \sr((1-P)A(1-P))$.
Then $\lambda$ is an eigenvalue of $A$ and hence the Perron--Frobenius theorem gives
$|\lambda| < \sr(A)$. The spectral-radius formula then gives $\|(1-P)A^k(1-P)\|^{1/k} \to
|\lambda| < \sr(A)$, and so there exists $l$ such that $\|(1-P)A^l(1-P)\| < \sr(A)^l$. So
$\alpha := \|(1-P)A^l(1-P)\|^{1/l}\sr(A)^{-l} < 1$. For every $k$, we have
$\sr(A)^{-kl}\|(1-P)A^{kl}(1-P)\| \le \sr(A)^{-kl}\|(1-P)A^l(1-P)\|^k < \alpha^{kl}$. Now
for any $p < l$, we have
\begin{align*}
\sr(A)^{-kl - p}\|(1-P)A^{kl + p}(1-P)\|
    &\le \sr(A)^{-p}\|(1-P)A^p(1-P)\| \alpha^{kl} \\
    &= \sr(A)^{-p}\alpha^{-p}\|(1-P)A^p(1-P)\| \alpha^{kl + p},
\end{align*}
and so any $C \ge \max_{p < l} \sr(A)^{-p}\alpha^{-p}\|(1-P)A^p(1-P)\|$ does the job.
\end{proof}

\begin{exm}[$C^*$-algebras of primitive graphs]\label{exm:primgraph}
Fix a finite primitive directed graph $G$ and let $E(G)$ be the associated
$\mathbb{C}^{G^0}$-module. Write $A = A_G \in M_{G^0}(\mathbb{Z})$ for the vertex
adjacency matrix of $G$. For $k\geq 1$ we have
\[
 e^{\beta_k} = \sum_{v\in G^0} |vG^k|\delta_v = \sum_{v,w\in G^0} A^k(v,w) \delta_v
\]
Fix $\lambda \in G^n$. We write $\delta_\lambda = \delta_{\lambda_1} \otimes \dots
\otimes \delta_{\lambda_n} \in E(G)^{\otimes n}$. For $k>n$ we have
\[
 e^{-\beta_k}\cdot\delta_\lambda\cdot e^{\beta_{k-n}}
    = \frac{|s(\lambda)G^{k-n}|}{|r(\lambda)G^k|} \delta_\lambda
    = \frac{\|(A^t)^{k-n}\delta_{s(\lambda)} \|_1}{\|(A^t)^k\delta_{r(\lambda)} \|_1}\delta_\lambda
\]
We show first that
$
 \lim_{k\to\infty} e^{-\beta_k}\cdot\delta_\lambda\cdot e^{\beta_{k-n}}$
exists. Since $G$ is a finite primitive directed graph, we can apply Perron--Frobenius
theory to the transpose $A^t$ of its vertex-adjacency matrix. Let $x \in
\mathbb{C}^{G^0}$ be the $2$-norm-unimodular Perron--Frobenius eigenvector for $A^t$; so
$A^tx = \sr(A^t)x$, and $\|x\|_2 = 1$. Let $P$ be the projection onto $\C x$. By
Lemma~\ref{lem:rate}, there exist $\alpha < 1$ and $C > 0$ such that $\|\sr(A^t)^{-k}
(A^t)^k - P\| \le C\alpha^k$ for all $k$. Since $x$ has real entries, we have
\[
\lim_{k \to \infty} \sr(A^t)^{-k} (A^t)^k \delta_v = P\delta_v = x \langle \delta_v, x\rangle = x_v x
\]
for each $v \in E^0$. Hence
\begin{align}
 \lim_{k\to\infty} e^{-\beta_k} \cdot \delta_\lambda \cdot e^{-\beta_{k-n}}
    &= \lim_{k\to\infty} \frac{\sum_{v \in E^0} A^{k-n}(s(\lambda), v)}{\sum_{w \in E^0} A^k(r(\lambda), w)} \delta_\lambda \nonumber\\
    &= \lim_{k\to \infty} \frac{\sr(A^t)^{k-n}}{\sr(A^t)^k}\left( \frac{\|\lim_{k\to\infty}\frac{1}{\sr(A^t)^{k-n}}(A^t)^{k-n}\delta_{s(\lambda)}\|_1}
                                                    {\|\lim_{k\to\infty}\frac{1}{\sr(A^t)^k}(A^t)^k \delta_{r(\lambda)}\|_1} \right)\delta_\lambda \label{eq:kth term}\\
    &= \frac{1}{\sr(A^t)^n}\frac{x_{s(\lambda)}}{x_{r(\lambda)}}\delta_\lambda.\nonumber
\end{align}

To calculate the rate of convergence, we use Equation~\eqref{eq:kth term}  to write
\[
\Big|\sr(A^t)^n\Big(e^{-\beta_k} \cdot \delta_\lambda \cdot e^{-\beta_{k-n}} - \frac{1}{\sr(A^t)^n}\frac{x_{s(\lambda)}}{x_{r(\lambda)}}\delta_\lambda\Big)\Big|
    = \Bigg|\frac{\|\sr(A^t)^{n-k}(A^t)^{k-n} \delta_{s(\lambda)}\|_1}{\|\sr(A^t)^{-k}(A^t)^k\delta_{r(\lambda)}\|_1} - \frac{\|x_{s(\lambda)} x\|_1}{\|x_{r(\lambda)}x\|_1}\Bigg|.
\]
We have $\|\sr(A^t)^{-k}(A^t)^k\delta_{r(\lambda)} - x_{r(\lambda)} x\|_1 \le C\alpha^k
\|x\|_1$ for all $k$. For $k\in\N$ we have
\[
(1 - C\alpha^k) \|x_{r(\lambda)} x\|_1
    \le \|\sr(A^t)^{-k}(A^t)^k\delta_{r(\lambda)}\|_1
    \le (1 + C\alpha^k) \|x_{r(\lambda)} x\|_1,
\]
and hence
\begin{align*}
(1 + C\alpha^k)^{-1} \frac{\|\sr(A)^{n-k}(A^t)^{k-n} \delta_{s(\lambda)}\|_1}{\|x^G_{r(\lambda)} x^G\|_1}
    &\le \frac{\|\sr(A)^{n-k}(A^t)^{k-n} \delta_{s(\lambda)}\|_1}{\|\sr(A)^{-k}(A^t)^k\delta_{r(\lambda)}\|_1} \\
    &\le (1 - C\alpha^k)^{-1} \frac{\|\sr(A)^{n-k}(A^t)^{k-n} \delta_{s(\lambda)}\|_1}{\|x^G_{r(\lambda)} x^G\|_1}.
\end{align*}
Consequently
\begin{align*}
\Bigg|&\frac{\|\sr(A^t)^{n-k}(A^t)^{k-n} \delta_{s(\lambda)}\|_1}{\|\sr(A^t)^{-k}(A^t)^k\delta_{r(\lambda)}\|_1} - \frac{\|x_{s(\lambda)} x\|_1}{\|x_{r(\lambda)}x\|_1}\Bigg|\\
     &\hskip2em\le \max\Bigg\{\Bigg|\frac{(1 + C\alpha^k)^{-1}\|\sr(A^t)^{n-k}(A^t)^{k-n} \delta_{s(\lambda)}\|_1 - \|x_{s(\lambda)} x\|_1}{\|x_{r(\lambda)} x\|_1}\Bigg|,\\
     &\hskip7em \Bigg|\frac{(1 - C\alpha^k)^{-1}\|\sr(A^t)^{n-k}(A^t)^{k-n} \delta_{s(\lambda)}\|_1 - \|x_{s(\lambda)} x\|_1}{\|x_{r(\lambda)} x\|_1}\Bigg|\Bigg\}.
\end{align*}
Using the identity $(1 + C\alpha^k)^{-1} = 1 - C\alpha^k(1 + C\alpha^k)^{-1}$, we see
that
\begin{align*}
\big|(1 + C\alpha^k)^{-1}&\|\sr(A^t)^{n-k}(A^t)^{k-n} \delta_{s(\lambda)}\|_1 - \|x_{s(\lambda)} x\|_1\big|\\
    &\le \big|\|\sr(A^t)^{n-k}(A^t)^{k-n} \delta_{s(\lambda)}\|_1 - \|x_{s(\lambda)} x\|_1|\big| \\
    &\qquad + \big|C\alpha^k(1 + C\alpha^k)^{-1} \|\sr(A^t)^{n-k}(A^t)^{k-n} \delta_{s(\lambda)}\|_1\big|.
\end{align*}
The first term is in $O(\alpha^k)$ by the reverse triangle inequality and
Lemma~\ref{lem:rate}. The second term is in $O(\alpha^k)$ because the sequences $(1 +
C\alpha^{-k})^{-1}$ and $\|\sr(A^t)^{n-k}(A^t)^{k-n} \delta_{s(\lambda)}\|_1$ are
convergent, and hence bounded. Similar estimates show that $\big|(1 -
C\alpha^k)^{-1}\|\sr(A^t)^{k-n}(A^t)^{n-k} \delta_{s(\lambda)}\|_1 - \|x_{s(\lambda)}
x\|_1\big|$ is in $O(\alpha^k)$. Hence
\[
\Big\|e^{-\beta_k}\cdot\delta_\lambda\cdot e^{\beta_{k-n}} - \frac{1}{\sr(A^t)^n}\frac{x_{s(\lambda)}}{x_{r(\lambda)}}\delta_\lambda\Big\|
     \in O(\alpha^k).
\]
Since the $\delta_\lambda$ span $E^{\otimes n}$, it follows that $\|e^{-\beta_k}\cdot
\eta \cdot e^{\beta_{k-n}} - \tilde\eta\| \in O(\alpha^k)$ for each $\eta \in E^{\otimes
n}$. Since every $\delta > 1$ satisfies $k^{-\delta} > \alpha^k$ for large $k$, it
follows that the module $E$ satisfies the hypotheses of Proposition~\ref{prop:Phi-infty}.
\end{exm}

\begin{rmk}
Since the Cuntz--Krieger algebra of a $\{0,1\}$-matrix $A$ is isomorphic to the graph
$C^*$-algebra of the graph with adjacency matrix $A$ \cite[Proposition~4.1]{kprr}, the
preceding example shows that Proposition~\ref{prop:Phi-infty} covers the situation of
Cuntz-Krieger algebras associated to primitive matrices.
\end{rmk}

The following example is the graph $C^*$-algebraic realisation of the $C^*$-algebra of
$SU_q(2)$ \cite{HongSzymanski:cmp02}. Since the graph in question is not primitive, the
analysis of the preceding example does not apply, but we can check the hypotheses of
Proposition~\ref{prop:Phi-infty} by hand.

\begin{exm}
Consider the following graph $G$.
\[\begin{tikzpicture}
    \node (v) at (2,0) {$v$};
    \node (w) at (0,0) {$w$};
    \draw[-stealth] (v)--(w) node[pos=0.5, above]{$f$};
    \draw[-stealth] (v) .. controls +(0.75,0.75) and +(-0.75,0.75) .. (v) node[pos=0.5, above] {$e$};
    \draw[-stealth] (w) .. controls +(0.75,0.75) and +(-0.75,0.75) .. (w) node[pos=0.5, above] {$g$};
\end{tikzpicture}\]
The module $E$ is a copy of $\C^3$ which we write as $\lsp\{\delta_e, \delta_f,
\delta_g\}$. The left action of the projection $p_v$ is by $1$ on $\delta_e$ and zero
elsewhere, and $p_w=1-p_v$. The right action has $p_v$ acting by $1$ on both $\delta_e$
and $\delta_f$ and by zero on $\delta_g$. Schematically,
$$
E=\begin{pmatrix}\delta_e\\ \delta_f\\ \delta_g\end{pmatrix},\quad L(p_v)=\begin{pmatrix} 1 & 0 & 0\\
0 & 0 & 0\\ 0 & 0 & 0\end{pmatrix},\quad R(p_v)=\begin{pmatrix} 1 & 0 & 0\\
0 & 1 & 0\\ 0 & 0 & 0\end{pmatrix}.
$$
Hence $e^\beta=p_v+2p_w$. We have
$$
E^{\ox n}=\begin{pmatrix} \delta_{e^{\ox n}}\\ \delta_{f\ox e^{\ox n-1}}\\ \delta_{g\ox f\ox e^{\ox n-2}}\\
\vdots\\ \delta_{g^{\ox n-1}\ox f}\\ \delta_{g^{\ox n}}\end{pmatrix}
$$
and the left action of $p_v$ is nonzero only on $\delta_{e^{\ox n}}$, while the right
action of $p_w$ is nonzero only on $g^{\ox n}$. Hence $e^{\beta_n}=p_v+(n+1)p_w$. So for
a path $\lambda\in G^n$
$$
e^{-\beta_k}\delta_\lambda e^{\beta_{(k-n)}}=\left\{\begin{array}{ll} \delta_\lambda & \lambda=e^{n}\\
\frac{k+1-n}{k+1}\delta_\lambda & \lambda=g^{n}\\ \frac{1}{k+1}\delta_\lambda & \mbox{otherwise}.\end{array}\right.
$$
Hence the hypotheses of Proposition~\ref{prop:Phi-infty} are satisfied and $\Phi_\infty$
is well-defined for the algebra of this graph.
\end{exm}

\begin{rmk}
The boundedness of the sequence $\Phi_k(T)e^{-\beta_k}$, for $T\in T_E$, suggests that we could extend
the definition of $\Phi_\infty$ using Dixmier trace methods. The difficulty is with the
meaning of $\omega$-limits
in the $C^*$-algebra $A$.
Victor Gayral has pointed out to us that in any representation $\pi:A\to \B(\H)$, for any vectors
$\xi,\,\eta\in\H$, and for a suitable generalised limit $\omega\in (L^\infty([0,\infty)))^*$, the functional
$$
T\mapsto \omega\text{-}\!\lim_N\left(\sum_{k=0}^N\langle \xi,\Phi_k(T)e^{-\beta_k}\eta\rangle(1+k^2)^{-1/2}\right)
$$
is well-defined and so we can make sense of `weak $\omega$-limits'. Unfortunately, however,
the resulting limits in general lie in $A''$ rather than $A$, so they are not well suited
to our purposes.
\end{rmk}

In the special case where $\beta f = f \beta$ for all $f\in E$, the limits above always
exist.

\begin{lemma}
\label{lem:symm-compute}
If $\beta f=f\beta$ for all $f\in E$, then
\[
e^{\beta_n}=e^{\beta n}
\]
where $\beta=\beta_1$. Consequently $e^{-\beta_k}\eta e^{\beta_{k-|\eta|}}=e^{-\beta|\eta|}\eta$
for all $\eta\in F_E$ and all $k$.
\end{lemma}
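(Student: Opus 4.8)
The plan is to prove the first identity $e^{\beta_n}=e^{\beta n}$ by induction on $n$, the crux being to reduce it to the single recursion $e^{\beta_n}=e^\beta\,e^{\beta_{n-1}}$ and then iterate. Everything hinges on upgrading the hypothesis $\beta f=f\beta$ (which concerns only the generator $\beta$ and single tensors $f\in E$) to the statement that the left and right actions of $e^{c\beta}$ agree on every tensor power $E^{\ox j}$, for every scalar $c$. Once this is in hand, the computation is short.

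First I would propagate the symmetry from $\beta$ to its functional calculus. Since the left and right $A$-actions on $E$ commute, the hypothesis $\beta f=f\beta$ gives $\beta^k f=f\beta^k$ for all $k$ by an immediate induction, hence $p(\beta)f=f\,p(\beta)$ for every polynomial $p$; by norm-continuity of the functional calculus and of the two actions this extends to $g(\beta)f=f\,g(\beta)$ for every continuous $g$ on the spectrum of $\beta$, and in particular to $e^{c\beta}f=f\,e^{c\beta}$ for all $c\in\R$. I would then slide $e^{c\beta}$ through a tensor: for $\zeta=\zeta'\ox f\in E^{\ox j}$ the balancing relation of the internal tensor product together with $e^{c\beta}f=f\,e^{c\beta}$ gives, by induction on $j$,
\[
\zeta\cdot e^{c\beta}=\zeta'\ox(f\cdot e^{c\beta})=\zeta'\ox(e^{c\beta}\cdot f)=(\zeta'\cdot e^{c\beta})\ox f=(e^{c\beta}\cdot\zeta')\ox f=e^{c\beta}\cdot\zeta,
\]
so the left and right actions of $e^{c\beta}$ coincide on all of $E^{\ox j}$.

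For the recursion I would split the frame sum defining $e^{\beta_n}$ in \eqref{eq:beta-k} over the last index, writing $e_\rho=e_{\rho'}\ox e_{\rho_n}$, and apply the formula for the left inner product on $E^{\ox n}=E^{\ox(n-1)}\ox E$ displayed just before Lemma~\ref{lem:correct-beta1}. Summing over $\rho_n$ first collapses $\sum_{\rho_n}\pairL_A(e_{\rho_n}|e_{\rho_n})$ to $e^\beta$, leaving $e^{\beta_n}=\sum_{|\rho'|=n-1}\pairL_A(e_{\rho'}\cdot e^\beta|e_{\rho'})$. Now sliding $e^\beta$ to the left as above and using left $A$-linearity of $\pairL_A(\cdot|\cdot)$ pulls it out of the inner product, giving $e^{\beta_n}=e^\beta\sum_{|\rho'|=n-1}\pairL_A(e_{\rho'}|e_{\rho'})=e^\beta\,e^{\beta_{n-1}}$. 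Iterating from the base case $e^{\beta_0}=1_A$ yields $e^{\beta_n}=(e^\beta)^n=e^{\beta n}$.

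The consequence is then immediate: for homogeneous $\eta\in E^{\ox m}$ (so $|\eta|=m$) and $k\ge m$, the first identity gives $e^{-\beta_k}\eta\,e^{\beta_{k-|\eta|}}=e^{-\beta k}\,\eta\,e^{\beta(k-m)}$, and since the right action of $e^{\beta(k-m)}$ on $E^{\ox m}$ equals its left action, this equals $e^{-\beta k}e^{\beta(k-m)}\eta=e^{-\beta m}\eta=e^{-\beta|\eta|}\eta$; the general case follows by linearity in $\eta$. The only genuinely delicate point is the propagation of the symmetry through the internal tensor product — the sliding step — since it is there that one must combine the functional-calculus commutation $e^{c\beta}f=f\,e^{c\beta}$ with the $A$-balancing of $E^{\ox j}$; the remaining manipulations are routine once the actions of $e^{c\beta}$ are known to be two-sided symmetric on every $E^{\ox j}$.
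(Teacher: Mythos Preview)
Your proof is correct and follows essentially the same approach as the paper: split off the last tensor factor, collapse the inner sum to $e^\beta$, and then use the commutation hypothesis to pull $e^\beta$ from the right action to the left, obtaining the recursion $e^{\beta_n}=e^\beta e^{\beta_{n-1}}$. You are more explicit than the paper in justifying the sliding step---extending $\beta f=f\beta$ to $e^{c\beta}$ via functional calculus and then to all tensor powers via balancing---which the paper performs in one unannotated equality; this extra care is warranted and the argument is sound.
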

\begin{proof}
This is just from the definition: for each multi-index $\rho$ of length $n$, we write
$\rho = (\underline{\rho},\rho_n)$ where $\underline{\rho}$ has length $n-1$, and
calculate:
\begin{align*}
e^{\beta_n}=\sum_{|\rho|=n}\pairL_A(e_\rho|e_\rho)
    &= \sum_{|\rho|=n}\pairL_A(e_{\underline{\rho}}{\pairL_A(e_{\rho_n}|e_{\rho_n})}|e_{\underline{\rho}})
    = \sum_{|\rho|=n}\pairL_A(e_{\underline{\rho}}e^\beta|e_{\underline{\rho}})\\
   & = e^\beta\sum_{|\rho|=n}\pairL_A(e_{\underline{\rho}}|e_{\underline{\rho}})
    = e^\beta e^{\beta_{n-1}}.\qedhere
\end{align*}
\end{proof}

\subsection{The Kasparov module representing the extension}
\label{subsec:kas-mod}

For this section we assume that the bimodule $E$ satisfies the hypotheses of Proposition
\ref{prop:Phi-infty}, so that the expectation $\Phi_\infty:\Oo_E\to A$ is defined.

The functional $\Phi_\infty$ is, by construction, gauge invariant in the sense that if
$\xi\in E^{\ox k}$ and $\eta\in E^{\ox n}$ with $k\neq n$ then $\Phi_\infty(T_\xi
T_\eta^*)=0$. We define an $A$-valued inner product on $\Oo_E$ by
\begin{equation}
\pairR(S_1|S_2)_A:=\Phi_\infty(S_1^*S_2),\qquad S_1,\,S_2\in \Oo_E.
\label{eq:psi-product}
\end{equation}
We observe that $\mathcal{N}=\{T\in \Tt_E:\,\Phi_\infty(T^*T)=0\}$
is an $A$-bimodule: it carries a right action because $\Phi_\infty$ is
$A$-bilinear and it carries a left action because $\Phi_\infty(T^*a^*aT)\leq\Vert
a\Vert^2\Phi_\infty(T^*T)$. Similarly $\mathcal{N}$ is a left $\Tt_E$ module, and as
$\Phi_\infty$ vanishes on $\End_A^0(F_E)$ we have $k\cdot \Tt_E\subset \mathcal{N}$ for
all $k\in \End^0_A(F_E)$. These observations justify the following definition.

\begin{defn}
We let $(\Oo_E)^\Phi_A$ denote the right $C^*$-$A$-module obtained
as the completion of $\Oo_E/\mathcal{N}$ in the norm
$\Vert S+\mathcal{N}\Vert:=\Vert\Phi_\infty(S^*S)\Vert_A$.
We denote the class of $S_\mu S_\nu^*$ in $(\Oo_E)^\Phi_A$ by
$W_{\mu,\nu}$, and as a notational shortcut, we write $W_\mu$ for the class of $S_\mu$
instead of $W_{\mu,1}$.
\end{defn}

Our notational ambiguity will not cause problems: we have written
$\Phi_\infty$ for the functional $\Tt_E \to A$ obtained from
Proposition~\ref{prop:Phi-infty}, and also for the functional on $\Oo_E$ to which this
functional descends. So we can form a Hilbert bimodule either by using the former
$\Phi_\infty$ to define an $A$-valued sesquilinear form on $\Tt_E$, or by using the
latter $\Phi_\infty$ to define one on $\Oo_E$. But since $\Phi_\infty$ vanishes on the
covariance ideal, these two modules coincide, and the canonical representation of $\Tt_E$
on the former induced by multiplication actually descends to the corresponding
representation of $\Oo_E$ on the latter.

In particular, the module $(\Oo_E)^\Phi_A$ carries a representation of $\Oo_E$, defined
by left multiplication, and so, for instance, $S_\mu S_\nu^*\cdot W_{\sigma, \rho} =
W_{\mu\cdot \pairR(e_\nu|e_{\underline{\sigma}})_A\cdot\overline{\sigma}, \rho}$ when
$|\nu|\leq|\sigma|$.

Using the module $(\Oo_E)^\Phi_A$, we can now produce a Kasparov module representing the
extension class $\extcls\ox_{\End^0(F_E)}[F_E]$.

\begin{thm}
If $E_A$ is a finitely generated projective $A$-bimodule satisfying the hypotheses of
Proposition~\ref{prop:Phi-infty}, and $e_1,\dots,e_N$ is a frame for $E_A$, then the
series
$$
\sum_{k\geq 0}\sum_{|\rho|=k}\Theta_{W_{e_\rho},W_{e_\rho}}
$$
converges strictly to a projection $P \in \End_A((\Oo_E)_A^\Phi)$. The map $\xi \mapsto W_\xi$ is
an isometric isomorphism of $F_E$ onto $P (\Oo_E)^\Phi_A$. The left action of $\Oo_E$ on
$(\Oo_E)_A^\Phi$ has compact commutators with $P$. The triple
\[
(\Oo_E,(\Oo_E)^\Phi_A,2P-1)
\]
is a Kasparov module which represents the class $\extcls\ox_{\End^0(F_E)}[F_E]$.
\end{thm}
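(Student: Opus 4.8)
The plan is to dispose of the first three assertions by direct computation in $(\Oo_E)^\Phi_A$, and then to identify the resulting Kasparov class by recognising the triple as the standard dilation picture of the Toeplitz extension. Write $\pi$ for the left-multiplication representation of $\Oo_E$ and $V\colon F_E\to(\Oo_E)^\Phi_A$ for $\xi\mapsto W_\xi$. Since $\Phi_\infty$ is gauge invariant and restricts to the identity on $A=E^{\ox 0}$, one computes $\pairR(W_\xi|W_\eta)_A=\pairR(\xi|\eta)_A$ for $\xi,\eta$ of equal degree and $0$ otherwise, so $V$ is isometric. Set $P_k:=\sum_{|\rho|=k}\Theta_{W_{e_\rho},W_{e_\rho}}$. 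Using $\Theta_{a,b}\Theta_{c,d}=\Theta_{a\pairR(b|c)_A,d}$ and the frame relation $\sum_{|\rho|=k}\Theta_{e_\rho,e_\rho}=\Id_{E^{\ox k}}$, one checks that the $P_k$ are mutually orthogonal projections with $P_kW_\sigma=\delta_{k,|\sigma|}W_\sigma$. Because $\Phi_\infty$ vanishes off degree $0$, only the single index $|\rho|=|\sigma|-|\tau|$ contributes to $\Theta_{W_{e_\rho},W_{e_\rho}}W_{\sigma,\tau}$, so the partial sums $\sum_{k\le K}P_k$ are eventually constant on each $W_{\sigma,\tau}$ and uniformly bounded; hence they converge strictly to a projection $P=\sum_{k\ge 0}P_k$ with range $\clsp\{W_\sigma\}=V(F_E)$. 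This is the first assertion, and together with the isometry of $V$ it gives the second: $V$ is a unitary of $F_E$ onto $P(\Oo_E)^\Phi_A$ with $VV^*=P$ and $V^*V=\Id_{F_E}$.

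For the commutator assertion I would compute $[P,\pi(S_e)]$ exactly. The key is the intertwining relation $\pi(S_e)P_k=P_{k+1}\pi(S_e)$: the inclusion $\pi(S_e)\,\mathrm{range}(P_k)\subseteq\mathrm{range}(P_{k+1})$ is immediate from $\pi(S_e)W_\sigma=W_{e\ox\sigma}$, while $P_{k+1}\pi(S_e)(1-P_k)=0$ follows from a short computation showing that, for $x$ in the degree-$k$ summand, $\pairR(W_\mu|\pi(S_e)x)_A$ reduces to an inner product $\pairR(W_{\mu''}|x)_A$ with $\mu''\in E^{\ox k}$, which vanishes when $P_kx=0$. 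Telescoping then gives
$$
[P,\pi(S_e)]=\sum_{k\ge 0}\bigl(P_k\pi(S_e)-\pi(S_e)P_k\bigr)=P\pi(S_e)-(P-P_0)\pi(S_e)=P_0\pi(S_e)=\Theta_{W_1,\pi(S_e^*)W_1},
$$
a rank-one, hence compact, operator; taking adjoints handles $[P,\pi(S_e^*)]$, and $[P,\pi(a)]=0$ for $a\in A$ because left multiplication by $A$ and its adjoint preserve each $W(E^{\ox k})$. As $\Oo_E$ is generated by $\{S_e:e\in E\}\cup A$ and the compact endomorphisms form an ideal, $[P,\pi(a)]\in\End^0_A\bigl((\Oo_E)^\Phi_A\bigr)$ for all $a\in\Oo_E$, which is the third assertion.

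Finally, since $P=P^*$ and $(2P-1)^2=1$, the third assertion shows that $(\Oo_E,(\Oo_E)^\Phi_A,2P-1)$ is an odd Kasparov module. To identify its class I would argue as in the SMEB case: the compression $a\mapsto V^*\pi(a)V$ is a completely positive map $\Oo_E\to\End_A(F_E)$ which, using $(1-P)\pi(S_e)V=0$, sends $S_e$ to the Fock creation operator $T_e$ and hence lifts the Busby invariant of $0\to\End^0_A(F_E)\to\Tt_E\to\Oo_E\to 0$; thus $(\pi,P)$ dilates the Fock representation, with $V$ implementing the Morita equivalence $\End^0_A(F_E)\sim A$. By Kasparov's description of the class of a semisplit extension via such a dilation \cite[Section~7]{Kas} (compare \cite{CNNR}), the module represents $\extcls\ox_{\End^0_A(F_E)}[F_E]$. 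I expect this last step to be the main obstacle: one must verify carefully that $V^*\pi(\cdot)V$ genuinely lifts the Busby map---the bookkeeping of the compact corrections produced by the annihilation parts $S_\nu^*$---and that the Morita factor $[F_E]$ enters with the correct identification, so that the class is $\extcls\ox_{\End^0_A(F_E)}[F_E]$ rather than merely $\extcls$.
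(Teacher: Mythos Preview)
Your proposal is correct, and in places cleaner than the paper's argument; let me flag the two substantive points of divergence.

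\textbf{Commutator computation.} Your intertwining relation $\pi(S_e)P_k=P_{k+1}\pi(S_e)$ is a genuinely nicer organising principle than what the paper does. The paper instead computes the action of $[P,S_\mu]$ directly on spanning vectors $W_{\rho,\sigma}$, showing case-by-case that it vanishes unless $-|\mu|\le |\rho|-|\sigma|<0$, and then identifies the commutator with a finite sum of finite-rank operators indexed by that band. Your argument collapses this to the single identity $[P,\pi(S_e)]=P_0\pi(S_e)$, which is visibly rank one. The verification you sketch for $P_{k+1}\pi(S_e)(1-P_k)=0$ is exactly right: one reduces $\pairR(W_\lambda|\pi(S_e)x)_A$ with $\lambda\in E^{\ox(k+1)}$ to $\pairR(W_{(e|\lambda_1)_A\lambda'}|x)_A$ with $(e|\lambda_1)_A\lambda'\in E^{\ox k}$, using the gauge-orthogonality of the degree summands of $(\Oo_E)^\Phi_A$ to handle the off-degree pieces.

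\textbf{Identification of the class.} Here you and the paper part ways more significantly. The paper, after observing that the compressed representation on $P(\Oo_E)^\Phi_A$ is isomorphic to the Fock representation, finishes by proving that $\pi:\Oo_E\to\End_A((\Oo_E)^\Phi_A)$ is \emph{faithful}, invoking the gauge-invariant uniqueness theorem (the gauge action on $\Oo_E$ induces a unitary $\T$-action on the module intertwining $\pi$). Your route---show directly that the Busby invariant $a\mapsto [V^*\pi(a)V]$ agrees on generators with that of the Toeplitz extension, hence everywhere---is valid and makes the faithfulness check implicit: since the Toeplitz Busby map is injective and the two homomorphisms into the Calkin algebra coincide, the Kasparov-module Busby is automatically injective. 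Your anticipated ``bookkeeping of compact corrections'' is therefore not the real issue; the exact equality $V^*\pi(S_e)V=T_e$ (no compact correction at all, because $(1-P)\pi(S_e)V=0$) already pins down the Busby map on generators. The paper's gauge-invariant-uniqueness argument buys a cleaner narrative---one verifies a structural property of $\pi$ rather than chasing generators through the Calkin algebra---but yours is equally rigorous once you note that both Busby maps are genuine $*$-homomorphisms and agree on a generating set.
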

\begin{proof}
The map $\iota:(F_E)_A\to (\Oo_E)^\Phi_A$ given by $\xi\mapsto\iota(\xi):=W_\xi$ is isometric.
This follows from the computation
$$
(W_\xi|W_\xi)_A=\Phi_\infty(W_\xi^*W_\xi)=\Phi_\infty((\xi|\xi)_A)=(\xi|\xi)_A,
$$
and polar decomposition. We now define projections $P_k$ for $k\geq 0$ by
$$
P_k:=\sum_{|\rho|=k}\Theta_{W_{e_\rho},W_{e_\rho}}.
$$
The $P_k$ are adjointable because they are finite sums of rank one operators
for which $\Theta_{\xi,\eta}^*=\Theta_{\eta,\xi}$, and this formula then shows that
the $P_k$ are self-adjoint. The projection property follows from the computation
\begin{align*}
P_kP_\ell
&=\Big(\sum_{|\rho|=k}\Theta_{W_{e_\rho},W_{e_\rho}}\Big)
\Big(\sum_{|\sigma|=\ell}\Theta_{W_{e_\sigma},W_{e_\sigma}}\Big)
=\sum_{|\rho|=k,\,|\sigma|=\ell}\Theta_{W_{e_\rho}\,(W_{e_\rho}|W_{e_\sigma}),W_{e_\sigma}}\\
&=\left\{\begin{array}{ll} 0 & k\neq \ell\\
\sum_{|\rho|=k}\Theta_{W_{e_\rho},W_{e_\rho}} & k=\ell\end{array}\right.\\
&=\delta_{k,\ell}P_k
\end{align*}
which also shows that the various $P_k$ are mutually orthogonal. From these computations,
it is immediate that $P:=\sum_{k\geq 0}P_k$ is a projection.
Next observe that the image of $P$ is contained in $\iota(F_E)\subset (\Oo_E)^\Phi_A$,
since
\begin{align*}
PW_{\mu, \nu}
    &=\sum_{0\leq|\rho|\leq|\mu|}W_{e_\rho}\Phi_\infty(S_{e_\rho}^*S_{\mu}S_{\nu}^*)
     =\sum_{0\leq|\rho|\leq|\mu|} W_{e_\rho}\Phi_\infty\big(\pairR(e_\rho|{\underline{\mu}})_A S_{{\overline{\mu}}}S_{\nu}^*\big)\\
    &=\lim_{k\to\infty}\sum_{0\leq|\rho|\leq|\mu|}W_{e_\rho}\pairR(e_\rho|{\underline{\mu}})_A \pairL_A({\overline{\mu}}|\nu e^{\beta_{(k-|\nu|)}})e^{-\beta_k}\\
    &= \lim_{k\to\infty}\sum_{0\leq|\rho|\leq|\mu|}W_{e_\rho(e_\rho|{\underline{\mu}})_A}\, \pairL_A({\overline{\mu}}|\nu e^{\beta_{(k-|\nu|)}})e^{-\beta_k}.
\end{align*}
If $|\overline{\mu}| \not= |\nu|$, then  $\pairL_A({\overline{\mu}}|\nu e^{\beta_{(k-|\nu|)}}) =
\Phi_\infty(S_{\overline{\mu}} S^*_{\nu e^{\beta_{k - |\nu|}}}) = 0$, and so we see that
$PW_{\mu,\nu} = 0$ if $|\mu| < |\nu|$, and
\[
PW_{\mu, \nu}
    = W_{\underline{\mu}} \Phi_\infty(S_{\overline{\mu}} S^*_\nu)
        \quad\text{ if $|\mu| \ge |\nu|$ and
        $\mu = \underline{\mu} \otimes \overline{\mu}$ with $|\overline{\mu}| = |\nu|$}.
\]
Thus $PW_{\mu,\nu}\in \iota(F_E)\cdot A\subset\iota(F_E)$. Since the image of $P$ can
easily be seen to contain $W_{e_\rho}$ for all multi-indices $\rho$, it follows that
the image of $P$ equals $F_E$. Thus we also learn that $F_E$ is a complemented sub-module
of $(\Oo_E)^\Phi_A$ and that the isometric inclusion $\iota:F_E\hookrightarrow (\Oo_E)^\Phi_A$
is also adjointable. It is straightforward to check
that the map $\iota$ also intertwines the actions of $\Tt_E$ on these copies of $F_E$. Thus the
compression of the action of $\Oo_E$ on $(\Oo_E)^\Phi_A$ to the subspace
$P(\Oo_E)^\Phi_A$ gives a positive splitting of the quotient map
$\Tt_E\stackrel{q}{\to}\Oo_E$.

To see that $(\Oo_E,(\Oo_E)_A^\Phi,2P-1)$ is a Kasparov
module, we must verify the compactness of the
commutators $[P,S_{\mu}]$.
Fix elementary tensors $\rho,\,\sigma,\,\mu\in F_E$, and observe that
\begin{align*}
PS_{\mu}W_{\rho, \sigma}
    = 0 &\quad\mbox{if}\ |\mu|+|\rho|-|\sigma|<0,\quad\text{ and}\quad
S_{\mu}PW_{\rho, \sigma}
    =0 \quad\mbox{if}\ |\rho|-|\sigma|<0.
\end{align*}
In the following, if $ |\mu|+|\rho|-|\sigma|\geq 0$ then we split
$\mu=\underline{\mu}\otimes\overline{\mu}$ so that $|\overline{\mu}|+|\rho|-|\sigma|= 0$.
Then to complete the proof, first observe the easy relation (proved above)
\[
PW_{\mu}\cdot c=W_{\mu}\cdot\Phi_\infty(c),\quad c\in{\rm span}\{S_\alpha S_\beta^*\,:\,|\alpha|=|\beta|\}.
\]
Then
\begin{align*}
&PS_\mu W_{\rho,\sigma}-S_\mu P W_{\rho,\sigma}\\
&=\left\{\begin{array}{ll}
PW_{\mu\rho,\sigma}-W_{\mu\underline{\rho}}\Phi_\infty(S_{\overline{\rho}}S_\sigma^*)
& |\rho|-|\sigma|\geq 0\\
PW_{\mu\rho,\sigma} & |\rho|-|\sigma|<0\end{array}\right.
=\left\{\begin{array}{ll}
0
& |\rho|-|\sigma|\geq 0\\
W_{\mu\rho,\sigma} & -|\mu|\leq|\rho|-|\sigma|<0\\
0 & |\rho|-|\sigma|< -|\mu|
\end{array}\right.\\
&=\sum_{j=-|\mu|}^{-1}P_j S_\mu W_{\rho,\sigma}
\end{align*}
which is explicitly the action of a finite sum of finite rank operators, and so certainly compact.

Since $(2P-1)^2=1$, $(2P-1)^*=(2P-1)$ and $[2P-1,S_{\mu}S_{\nu}^*]$ is compact for all
vectors $\mu,\nu\in F_E$, we have completed the proof that we obtain an
odd Kasparov module.
So it suffices to show that the Busby invariant agrees with that of the class
$\extcls\ox_{\End^0_A(F_E)}[F_E]$. We have seen that the representation of $\Tt_E$
on $P (\Oo_E)^\Phi_A$ is isomorphic to the Fock representation, so we just need to show
that the representation $\pi : \Oo_E \to \End_A((\Oo_E)^\Phi_A)$ induced by
multiplication is faithful. Since $\Phi_\infty$ is the identity map on $A$, the image of
$A$ in the module $(\Oo_E)^\Phi_A$ is a copy of the standard module ${}_AA_A$, and so
$\pi$ is faithful on $A$. As discussed above, the gauge action on $\Oo_E$ determines a
unitary action of $\T$ on $(\Oo_E)^\Phi_A$, and this unitary action induces an action
$\beta$ of $\T$ on $\End_A((\Oo_E)^\Phi_A)$. It is routine to check that $\beta_z \circ
\pi = \pi \circ \gamma_z$ for all $z$. So the gauge-invariant uniqueness theorem
\cite[Theorem~4.5]{Katsura:TAMS04} shows that $\pi$ is injective.
\end{proof}

\begin{corl}
Let $E$ be a finitely generated bi-Hilbertian $A$-bimodule. Then the boundary maps in the
$K$-theory and $K$-homology exact sequences for Cuntz--Pimsner algebras, labelled
(1)~and~(2) in \cite[Theorem~4.1]{Pimsner}, are given by the Kasparov product with
$(\Oo_E,(\Oo_E)^\Phi_A,2P-1)$.
\end{corl}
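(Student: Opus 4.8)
The plan is to obtain the corollary by combining the preceding theorem, which realises $(\Oo_E,(\Oo_E)^\Phi_A,2P-1)$ as a representative of $\extcls\ox_{\End^0_A(F_E)}[F_E]\in KK^1(\Oo_E,A)$, with the general principle that the connecting maps of a semisplit extension are implemented by Kasparov product with its extension class. Concretely, I would first show that the boundary maps in Pimsner's two sequences are the products with the abstract class $\extcls\ox_{\End^0_A(F_E)}[F_E]$, and only then invoke the preceding theorem to replace that class by our explicit module.

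First I would recall the structure underlying \cite[Theorem~4.1]{Pimsner}: the six-term sequences (1) in $K$-theory and (2) in $K$-homology are those of the Toeplitz extension $0\to\End^0_A(F_E)\to\Tt_E\to\Oo_E\to 0$, with each occurrence of $K_*(\End^0_A(F_E))$ identified with $K_*(A)$ through the Morita equivalence $[F_E]\in KK(\End^0_A(F_E),A)$ and each occurrence of $K_*(\Tt_E)$ identified with $K_*(A)$ through the $KK$-equivalence of $\Tt_E$ and $A$. I would then invoke the standard fact (in the framework of \cite{Kas}) that for a semisplit extension $0\to I\to B\to Q\to 0$ with extension class $\tau\in KK^1(Q,I)$, the boundary maps in both the $K$-theory and the $K$-homology six-term sequences are given by the (right, respectively left) Kasparov product with $\tau$. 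The present extension is semisplit: as established in the proof of the preceding theorem, compression by the projection $P$ --- equivalently, the Fock representation --- furnishes a completely positive splitting of the quotient map $q\colon\Tt_E\to\Oo_E$. Hence the boundary maps are the products with $\extcls$, and composing with $[F_E]$ while using associativity of the Kasparov product turns them into products with $\extcls\ox_{\End^0_A(F_E)}[F_E]$.

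Finally, when $E$ satisfies the standing hypotheses so that $(\Oo_E)^\Phi_A$ is defined, the preceding theorem identifies this class with that of $(\Oo_E,(\Oo_E)^\Phi_A,2P-1)$; the $K$-theory boundary map is then $x\mapsto x\ox_{\Oo_E}[(\Oo_E,(\Oo_E)^\Phi_A,2P-1)]$ and the $K$-homology boundary map is $y\mapsto [(\Oo_E,(\Oo_E)^\Phi_A,2P-1)]\ox_A y$, as claimed. The main obstacle is the identification carried out in the previous paragraph: one must check that Pimsner's explicit construction of the boundary maps in \cite[Theorem~4.1]{Pimsner} agrees, with the correct orientations and signs, with the abstract $KK$-theoretic connecting map given by product with the extension class. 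Since Pimsner builds his maps from precisely this extension, this is essentially a matter of reconciling conventions, but it must be verified consistently for both the $K$-theoretic and the $K$-homological sequences.
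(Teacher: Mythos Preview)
Your argument is correct and is exactly the reasoning the paper relies on. In the paper the corollary is stated without proof: it is intended as an immediate consequence of the preceding theorem (which identifies the Kasparov module with $\extcls\ox_{\End^0_A(F_E)}[F_E]$) together with the standard fact that boundary maps in the six-term sequences of a semisplit extension are implemented by Kasparov product with the extension class, composed here with the Morita equivalence $[F_E]$ used by Pimsner to pass from $\End^0_A(F_E)$ to $A$.

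Your write-up actually makes explicit the points the paper leaves implicit: the semisplitting via compression by $P$, the associativity step that turns product with $\extcls$ followed by $[F_E]$ into product with $\extcls\ox_{\End^0_A(F_E)}[F_E]$, and the caveat that the standing hypotheses of Proposition~\ref{prop:Phi-infty} are needed for $(\Oo_E)^\Phi_A$ to be defined. Your closing remark about reconciling Pimsner's explicit boundary maps with the abstract $KK$-connecting map is a fair caution, but this is indeed just a matter of conventions and is part of the standard folklore invoked without comment in the paper.
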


\begin{rmk}
If $E$ is a SMEB then the class $(\Oo_E,(\Oo_E)^\Phi_A,2P-1)$ is just the class of the Fock
module presented earlier.
\end{rmk}

\section{KMS functionals for bimodule dynamics}

Given a bi-Hilbertian bimodule $E$ over a unital algebra $A$, we have seen that
the (right) Jones--Watatani
index
$
\Phi(\Id_E)=e^\beta\in \mathcal{Z}(A)
$
carries useful structural information about the associated Cuntz-Pimsner algebra.
The Jones--Watatani index can be defined for a much wider class of bimodules than those
considered here, and we refer to \cite{KajPinWat} for further examples, the general
framework, and relations to conjugation theory.

Our aim in this final section is to show that the
Jones--Watatani index of the bimodule $E$ also determines a
natural one-parameter group of automorphisms of $\Oo_E$ that often admits a natural KMS state.
The dynamics and most of the ingredients of the KMS states we construct arise from the
bimodule alone, but we require one additional ingredient: a state on $A$ which is
invariant for $E$ in an appropriate sense.

\begin{defn}
Let $E$ be a bi-Hilbertian bimodule over a unital $C^*$-algebra $A$. A state
$\phi:A\to\C$ is $E$-invariant if for all $e_1,\,e_2\in E$ we have
$\phi((e_1|e_2)_A)=\phi({}_A(e_2|e_1))$.
\end{defn}

\begin{lemma}
If a state $\phi:A\to\C$ is $E$-invariant, then it is a trace.
\end{lemma}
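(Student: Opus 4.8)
The plan is to establish $\phi(ab)=\phi(ba)$ first on a dense subalgebra and then extend by continuity. Since $E$ is full as a left Hilbert module (Definition~\ref{defn:bimod}), the linear span of $\{\pairL_A(e_2|e_1):e_1,e_2\in E\}$ is dense in $A$. So it is enough to prove that $\phi(\pairL_A(e_2|e_1)\,a)=\phi(a\,\pairL_A(e_2|e_1))$ for all $e_1,e_2\in E$ and all $a\in A$: the general identity $\phi(ba)=\phi(ab)$ then follows because, for fixed $a$, the map $b\mapsto\phi(ba)-\phi(ab)$ is linear and continuous and vanishes on this dense span.

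The core of the argument is to shuttle the scalar $a$ from one side of the inner product to the other, passing between the left and right inner products by means of $E$-invariance. Explicitly, I would compute
\begin{align*}
\phi(\pairL_A(e_2|e_1)\,a)
    &= \phi(\pairL_A(e_2|a^*\cdot e_1))
     = \phi(\pairR(a^*\cdot e_1|e_2)_A)\\
    &= \phi(\pairR(e_1|a\cdot e_2)_A)
     = \phi(\pairL_A(a\cdot e_2|e_1))
     = \phi(a\,\pairL_A(e_2|e_1)).
\end{align*}
Here the first equality is the sesquilinearity identity $\pairL_A(e|c\cdot f)=\pairL_A(e|f)c^*$ (valid in any left Hilbert module, applied with $c=a^*$); the second and fourth equalities apply $E$-invariance in the form $\phi(\pairL_A(v|u))=\phi(\pairR(u|v)_A)$ and its reverse; the third equality uses $\pairR(a^*\cdot e_1|e_2)_A=\pairR(e_1|a\cdot e_2)_A$; and the last is left-linearity of $\pairL_A(\cdot|\cdot)$ in its first slot.

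The only structural input beyond the sesquilinearity of the two inner products is the identity $\pairR(a^*\cdot e_1|e_2)_A=\pairR(e_1|a\cdot e_2)_A$ used at the third step; this is precisely the statement that the left action is adjointable with $L_a^*=L_{a^*}$ with respect to $\pairR(\cdot|\cdot)_A$, that is, that $A$ acts on the right Hilbert module $E_A$ by a $*$-homomorphism, which is part of the bimodule hypothesis. I expect the main thing to watch to be the bookkeeping: keeping track of which of the two inner products is linear in which variable, of the slot in which the adjoint lands when a scalar is transferred, and of the direction in which $E$-invariance is applied at each step. Once the displayed chain is verified, the passage from the dense span of left inner products to all of $A$ is routine since $\phi$ is bounded.
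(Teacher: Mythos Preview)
Your proof is correct and takes essentially the same approach as the paper's: shuttle $a$ across by alternating $E$-invariance with the adjointability of one action with respect to the other inner product, then invoke fullness and continuity. The only difference is cosmetic---you work with the left inner product and use adjointability of the left action for $\pairR(\cdot|\cdot)_A$, whereas the paper works with the right inner product and uses adjointability of the right action for $\pairL_A(\cdot|\cdot)$.
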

\begin{proof}
For $e,f \in E$ and $a \in A$, we have
$$
\phi(\pairR(e|f)_Aa)=\phi(\pairR(e|fa)_A)=\phi(\pairL_A(fa|e))=\phi(\pairL_A(f|ea^*))
=\phi(\pairR(ea^*|f)_A)=\phi(a\pairR(e|f)_A).
$$
So in particular $\phi$ is tracial on the range of $\pairR(\cdot|\cdot)_A$. Since the
right inner-product is full, this completes the proof.
\end{proof}

Before proceeding, we present some examples that demonstrate that the existence of an
invariant trace is not a prohibitively restrictive hypothesis.

\begin{exm}[Crossed products by $\Z$]
Let $E$ be the module $A$ with left action given by an automorphism $\alpha$, as in
Example \ref{ex:cross}. Then the definition of an $E$-invariant state
$\phi:A\to \C$ immediately says that $\phi$ is $\alpha$-invariant. When $A=C(X)$ is
abelian, this is of course just an $\alpha$-invariant measure.
\end{exm}

\begin{exm}[Directed graphs] Let $G = (G^0,G^1,r,s)$ be a finite directed graph
with no sinks or sources. Let $A = \C^{|G^0|}$ and let $E=\overline{C_c(G^1)}$ be the
bi-Hilbertian $A$-bimodule from Example \ref{sub:CK-algs}. Define $A \to \C$ by
\[
 \varphi(f) = \sum_{v\in G^0} f(v)
\]
where $f\in C(G^0)$. If $\xi,\eta\in C_c(G^1)$ we have
\begin{align*}
 \varphi((\xi|\eta)_A) &= \sum_{v\in G^0} (\xi|\eta)_A(v)
 = \sum_{v\in G^0} \sum_{s(e) = v} \overline{\xi(e)}\eta(e)
= \sum_{v\in G^0} \sum_{r(e) = v} \eta(e) \overline{\xi(e)}
= \varphi({}_A(\eta|\xi))
\end{align*}
so $\varphi$ is $E$-invariant.
\end{exm}

\begin{exm}[Topological graphs]
Let $G = (G^0,G^1,r,s)$ be a topological graph with $r : G^1 \to G^0$ a local
homeomorphism. Let $E$ be the bi-Hilbertian
bimodule over $A = C_0(G^0)$ from Example \ref{ex:top-graph}.
Suppose that $\mu$ is a probability measure on $G^0$ satisfying
\[
 \int_{r(\supp \xi)} \xi d\mu = \int_{s(\supp\xi)} \xi d\mu
\]
whenever $\xi\in C_c(G^1)$ with $r$ and $s$ bijective on $\supp\xi$. Given $f\in A$ define
\[
 \varphi(f) = \int f d\mu.
\]
Then $\varphi$ is $E$-invariant.
\end{exm}

It is fairly clear that the preceding example can be further generalised to the twisted
topological graph algebras of Li \cite{Li2014} (see Example~\ref{sub:twistedtop-graph}).

We now show how an $E$-invariant trace can be used to construct a KMS state for a
dynamics on $\Oo_E$ determined by the Jones--Watatani index of the module.

\begin{lemma}
\label{lem:dynamics} Let $E$ be a finitely generated bi-Hilbertian $A$-bimodule, and let
$(T, \pi)$ denote the universal generating Toeplitz representation of $E$ in $\Tt_E$.
There is a dynamics $\gamma : \R \to \Aut(\Tt_E)$ such that
$$
\gamma_t(T_e):=\pi(e^{i\beta t}) T_e,\quad e\in E,\quad\text{ and }\quad
\gamma_t(\pi(a)) = \pi(a), \quad a \in A.
$$
Moreover, this dynamics descends to a dynamics, also denoted $\gamma$, on $\Oo_E$.
\end{lemma}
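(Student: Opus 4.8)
\emph{Proof proposal.} The plan is to realise each $\gamma_t$ through the universal property of $\Tt_E$ and then to push the resulting automorphisms down to $\Oo_E$ by showing they preserve the covariance ideal. The one structural fact that drives everything is that, as recorded just after~\eqref{eq:beta-k}, the element $e^\beta=\Phi(\Id_E)$ is positive, invertible and \emph{central} in $A$; hence $\beta=\log\Phi(\Id_E)$ is central and self-adjoint, and for each $t\in\R$ the element $u_t:=e^{i\beta t}$ is a central unitary of $A$.

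First I would fix $t$ and define $\psi_t:E\to\Tt_E$ by $\psi_t(e):=\pi(u_t)T_e$, retaining the same homomorphism $\pi$, and check that $(\psi_t,\pi)$ is a Toeplitz representation of $E$. Right $A$-linearity is immediate from $T_{e\cdot a}=T_e\pi(a)$; left $A$-linearity follows from
\[
\psi_t(a\cdot e)=\pi(u_t)\pi(a)T_e=\pi(a)\pi(u_t)T_e=\pi(a)\psi_t(e),
\]
where the middle equality is exactly the centrality of $u_t$; and the Toeplitz inner-product identity holds because $u_t^*u_t=1$, so $\psi_t(e)^*\psi_t(f)=T_e^*\pi(u_t^*u_t)T_f=T_e^*T_f=\pi(\pairR(e|f)_A)$. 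The universal property of $\Tt_E$ then supplies a $*$-homomorphism $\gamma_t:\Tt_E\to\Tt_E$ with $\gamma_t(T_e)=\pi(u_t)T_e$ and $\gamma_t(\pi(a))=\pi(a)$. Evaluating on generators gives $\gamma_0=\id$ and $\gamma_s\circ\gamma_t=\gamma_{s+t}$ (using $u_su_t=u_{s+t}$ together with centrality), so $\gamma_{-t}$ inverts $\gamma_t$ and each $\gamma_t$ is an automorphism. Strong continuity reduces, since the $\gamma_t$ are isometric and the $T_e,\pi(a)$ generate a dense $*$-subalgebra, to the estimate $\|\gamma_t(T_e)-\gamma_{t_0}(T_e)\|\le\|u_t-u_{t_0}\|\,\|T_e\|\to 0$, valid because $t\mapsto e^{i\beta t}$ is norm-continuous in $A$.

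For the descent, recall that $\Oo_E$ is the quotient of $\Tt_E$, with quotient map $q$, by the covariance ideal $\I$ generated by the Cuntz--Pimsner relations $\pi(a)-\sum_i T_{a\cdot e_i}T_{e_i}^*$ (here $(e_i)$ is a finite frame, and $a$ ranges over the appropriate ideal). I would compute the image of such a generator:
\[
\gamma_t\Big(\pi(a)-\sum_i T_{a\cdot e_i}T_{e_i}^*\Big)
 =\pi(a)-\pi(u_t)\Big(\sum_i T_{a\cdot e_i}T_{e_i}^*\Big)\pi(u_t)^*.
\]
Because $u_t$ is central we have $\pi(u_t)\pi(a)\pi(u_t)^*=\pi(a)$, so the right-hand side equals $\pi(u_t)\big(\pi(a)-\sum_i T_{a\cdot e_i}T_{e_i}^*\big)\pi(u_t)^*$, which lies in $\I$. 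Hence $\gamma_t(\I)\subseteq\I$, and since $\gamma_{-t}(\I)\subseteq\I$ as well, $\gamma_t$ descends to an automorphism of $\Oo_E$; strong continuity passes to the quotient since $q$ is norm-decreasing and intertwines the two families.

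The individual computations are all routine; the only point demanding care, and the crux of the argument, is the descent. It is precisely the centrality of $\beta$ that forces $\pi(u_t)\pi(a)\pi(u_t)^*=\pi(a)$ and thereby sends each covariance generator back into $\I$. Without centrality the twist $\pi(u_t)$ would not commute past $\pi(a)$, the new pair $(\psi_t,\pi)$ would fail the left-linearity relation, and the dynamics would not respect the Cuntz--Pimsner relations.
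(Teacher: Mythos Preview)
Your proof is correct and follows essentially the same approach as the paper: both arguments construct $\gamma_t$ via the universal property by checking that $(\psi_t,\pi)$ is a Toeplitz representation (using centrality of $e^{i\beta t}$ for left-linearity and unitarity for the inner-product relation), and both establish descent by showing that conjugation by $\pi(e^{i\beta t})$ carries the covariance generators to themselves because centrality gives $\pi(e^{i\beta t})\pi(a)\pi(e^{-i\beta t})=\pi(a)$. The only cosmetic difference is that the paper phrases the descent using the induced map $R^{(1)}$ on rank-one operators rather than an explicit frame, but the content is identical.
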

\begin{proof}
Fix $t \in \R$ and define $R : E \to \Tt_E$ by $R_e := e^{i\beta t} T_e$. Then $R$ is a
linear map, and since $e^{i\beta t}$ is central in $A$, we have
\[
\pi(a)R_e = \pi(a e^{i\beta t}) T_e = \pi(e^{i\beta t} a)T_e = R_{a\cdot e}.
\]
We have $R_e \pi(a) = R_{e\cdot a}$ by associativity of multiplication. For $e,f \in E$,
we have $R^*_e R_f = T^*_e \pi(e^{-i\beta t} e^{i\beta t}) T_f$. Since $e^{\beta}$ is
invertible and positive, $e^{i\beta t}$ is unitary, with adjoint $e^{-i\beta t}$, and so
$R^*_e R_f = T^*_e T_f = \pi(\pairR(e|f)_A)$. So $(R, \pi)$ is a Toeplitz
representation.

Now the universal property of $\Tt_E$ shows that there is a homomorphism $\gamma_t :
\Tt_E \to \Tt_E$ satisfying the desired formulae. Clearly $\gamma_s \circ \gamma_t =
\gamma_{s+t}$ and $\gamma_e = \Id_{\Tt_E}$ on generators, and it follows that $t \mapsto
\gamma_t$ is a homomorphism of $\R$ into $\Aut(\Tt_E)$. A routine
$\varepsilon/3$-argument shows that this homomorphism is strongly continuous, completing
the proof.

To see that $\gamma$ descends to a dynamics on $\Oo_E$, observe that with $(R, \pi)$ as
above, for $e,f \in E$, we have
\[
R^{(1)}(\theta_{e,f}) = R_e R^*_f
    = \pi(e^{i\beta t}) T_e T^*_f \pi(e^{-i\beta t}).
\]
So for $a \in A$, writing $\phi(a) \in \End^0_A(E)$ for the compact operator given by
$\phi(a) e = a \cdot e$ for $e \in E$, we have $R^{(1)}(\phi(a)) = \pi(e^{i\beta t})
T^{(1)}(\phi(a)) \pi(e^{-i\beta t})$. Since $e^{i\beta t}$ is a central unitary in $A$,
we have $\pi(a) =  \pi(e^{i\beta t} a e^{-i\beta t})$ for all $a \in A$, and hence
\begin{align*}
\gamma_t(T^{(1)}(\phi(a)) - \pi(a))
    &= R^{(1)}(\phi(a)) - \pi(a) \\
    &= \pi(e^{i\beta t}) T^{(1)}(\phi(a)) \pi(e^{-i\beta t}) - \pi(e^{i\beta t} a e^{-i\beta t}) \\
    &= \pi(e^{i\beta t}) \big(T^{(1)}(\phi(a)) - \pi(a)\big) \pi(e^{-i\beta t}).
\end{align*}
So each $\gamma_t$ preserves the covariance ideal and therefore descends to $\Oo_E$
as claimed.
\end{proof}

Note that, in general, $e^{i\beta t}f\neq fe^{i\beta t}$ for $f\in E$. So we typically
have
$$
    \gamma_t(S_{e_\rho}S_{e_\gamma}^*) \neq e^{i\beta|\rho|t} S_{e_\rho}S_{e_\gamma}^*e^{-i\beta|\gamma|t}.
$$

The dynamics on $\Tt_E$ described in Lemma~\ref{lem:dynamics}  is implemented by the second quantisation of the one parameter
unitary group $t\mapsto U_t=e^{i\beta t}$, \cite{LacaNeshveyev}. The second quantisation
is given by $\Gamma(U_t)=\Id_A\oplus\, U_t\oplus(U_t\ox U_t)\oplus\cdots$. We let
$$
\D=\oplus_{n\in\N} \big(\beta\otimes{\rm Id}_{E^{\otimes n-1}}
+{\rm Id}_E\otimes\beta\otimes{\rm Id}_{E^{\otimes n-2}}+\cdots+
{\rm Id}_{E^{\otimes n-1}}\otimes\beta\big)
$$
be the (self-adjoint, regular) generator of the unitary group $\Gamma(U_t)$. Combining
ideas from \cite{CNNR,LacaNeshveyev} we can construct a KMS state for $\gamma$. Recall
from \cite[Theorem~1.1]{LacaNeshveyev} that if $\phi$ is a trace on $A$, and $M$ is a
right-Hilbert $A$-module, then there is a norm lower semicontinuous semifinite trace
$\Tr_\phi$ on $\End^0(M)$ such that
\begin{equation}\label{eq:trphi}
\Tr_\phi(\Theta_{\xi,\eta})=\phi(\pairR(\eta|\xi)_A)\quad\text{ for all $\xi,\eta \in M$.}
\end{equation}
Note that if $M$ is finitely generated, then $\Tr_\phi$ is finite.

\begin{prop}\label{prp:KMS}
Let $E$ be a finitely generated bi-Hilbertian $A$-bimodule, $\phi$ an $E$-invariant trace
on $A$, and $\beta\in \mathcal{Z}(A)$ as defined in Equation \eqref{eq:log-beta}. Let $N$
denote the number operator on the Fock space. Then there is a state $\phi_\D$ on $\Tt_E$ such
that
\[
\phi_\D(T_\xi T_\eta^*)=\res_{s=1}\Tr_\phi(e^{-\D}T_\xi T_\eta^*(1+N^2)^{-s/2}) \quad\text{ for all $\xi,\eta \in \textstyle\bigcup_n E^{\otimes n}$.}
\]
This $\phi_\D$ vanishes on $\End^{00}(F_E)$, and descends to a linear functional, still
denoted $\phi_\D$ on $\Oo_E$. Moreover, $\phi_\D$ is a KMS$_1$-state of $\Oo_E$ for
$\gamma$.
%
%For $S_1,\,S_2\in \lsp\{S_\xi S^*_\eta : \xi,\eta \in \bigcup_n E^{\otimes n}\}$, we have
%$$
%\phi_\D(S_1S_2)=\phi_\D(\gamma_{\sqrt{-1}}(S_2)S_1),
%$$
%and so is KMS for $\gamma$ at temperature 1.
\end{prop}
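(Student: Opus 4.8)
The plan is to compute the functional in closed form on the dense $*$-subalgebra $\lsp\{T_\xi T_\eta^* : \xi,\eta\in\bigcup_n E^{\ox n}\}$, and then to read off the state property, the vanishing on finite-rank operators, and the KMS condition. Throughout I write $\psi_s(x):=\Tr_\phi(e^{-\D}x(1+N^2)^{-s/2})$, so that $\phi_\D=\res_{s=1}\psi_s$; I let $\D_m$ denote the degree-$m$ component of $\D$ on $E^{\ox m}$, write $\Tr_\phi^{(m)}$ for the trace of \eqref{eq:trphi} on $\End^0(E^{\ox m})$, and for $c\in A$ let $L_c$ be left multiplication by $c$. First I would record that $T_\xi T_\eta^*$ shifts the Fock grading by $|\xi|-|\eta|$, whereas $e^{-\D}$ and $(1+N^2)^{-s/2}$ preserve it, so $\psi_s(T_\xi T_\eta^*)=0$ unless $|\xi|=|\eta|=:p$. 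For $|\xi|=|\eta|=p$ one has $T_\xi T_\eta^*|_{E^{\ox m}}=\Theta_{\xi,\eta}\ox\Id_{E^{\ox(m-p)}}$ and $e^{-\D}|_{E^{\ox m}}=e^{-\D_p}\ox e^{-\D_{m-p}}$; expanding $\Tr_\phi$ over the grading and using the trace property then reduces the $m$-th summand to $\Tr_\phi^{(m-p)}(L_c\,e^{-\D_{m-p}})$, where $c:=(\eta\mid e^{-\D_p}\xi)_A\in A$.

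The technical heart is the stabilisation identity $\Tr_\phi^{(k)}(L_c\,e^{-\D_k})=\phi(c)$ for every $k\ge0$, which I would prove by induction on $k$. Splitting off the last tensor leg gives $L_c\,e^{-\D_k}=(L_c\,e^{-\D_{k-1}})\ox e^{-\beta}$; evaluating $\Tr_\phi^{(k)}$ on a product frame and summing over the last leg, the $E$-invariance of $\phi$ enters in the form $\sum_i\phi((e_i\mid d\,e_i)_A)=\phi(d\,e^\beta)$ (using that $\sum_i{}_A(e_i\mid e_i)=e^\beta$), while centrality of $\beta$ collapses $e^{-\beta}$, leaving $\Tr_\phi^{(k-1)}(L_c\,e^{-\D_{k-1}})$; the base case is $\Tr_\phi^{(0)}(L_c)=\phi(c)$ on $E^{\ox0}=A$. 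Hence every summand equals $\phi(c)$, so $\psi_s(T_\xi T_\eta^*)=\phi(c)\sum_{m\ge p}(1+m^2)^{-s/2}$, and since $\sum_m(1+m^2)^{-s/2}$ has a simple pole of residue $1$ at $s=1$ I obtain the closed form $\phi_\D(T_\xi T_\eta^*)=\delta_{|\xi|,|\eta|}\,\phi\big((\eta\mid e^{-\D_{|\eta|}}\xi)_A\big)$; applying the identity with $c=1_A$ gives $\Tr_\phi^{(m)}(e^{-\D_m})=1$ for all $m$, so $\phi_\D(1)=1$.

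Positivity follows because, for real $s>1$ and $w:=e^{-\D/2}(1+N^2)^{-s/4}$ (positive and commuting with $e^{-\D}$), the trace property gives $\psi_s(x^*x)=\Tr_\phi((xw)^*(xw))\ge0$, and the residue of a nonnegative quantity is nonnegative; thus $\phi_\D$ is a state on $\Tt_E$. For the vanishing on $\End^{00}(F_E)$, a homogeneous $\Theta_{\xi,\eta}$ (with $\xi,\eta\in E^{\ox p}$) contributes only the single degree-$p$ term, so $\psi_s(\Theta_{\xi,\eta})=(1+p^2)^{-s/2}\phi((\eta\mid e^{-\D_p}\xi)_A)$ is entire in $s$, has no pole at $s=1$, and $\phi_\D(\Theta_{\xi,\eta})=0$; by linearity and continuity $\phi_\D$ vanishes on $\End^0_A(F_E)=\ker(\Tt_E\to\Oo_E)$ and so descends to $\Oo_E$. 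The contrast with $T_\xi T_\eta^*$, which acts in all degrees $\ge p$ and therefore produces a genuine pole, is exactly what the residue is detecting.

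Finally, for the KMS$_1$ condition it suffices to verify $\phi_\D(ab)=\phi_\D(b\gamma_i(a))$ for $a=T_\xi T_\eta^*$ and $b=T_\mu T_\nu^*$ homogeneous, since $\gamma_z(T_\xi)=T_{e^{iz\D_{|\xi|}}\xi}$ shows these span a dense set of entire analytic elements. The guiding idea is that the unregularised weight $\Tr_\phi(e^{-\D}\,\cdot\,)$ is \emph{exactly} KMS$_1$, because $\gamma$ is implemented by $\Gamma(U_t)=e^{it\D}$ and $\Tr_\phi$ is a trace; concretely $\gamma_i(a)e^{-\D}=e^{-\D}a$, which I would verify directly on elementary tensors. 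Working with $\psi_s$ for $\Re s>1$ and using cyclicity together with $[e^{-\D},(1+N^2)^{-s/2}]=0$ and $(1+N^2)^{-s/2}a=a(1+(N+d)^2)^{-s/2}$ (with $d=|\xi|-|\eta|$), I would obtain $\psi_s(ab)=\Tr_\phi(be^{-\D}a(1+(N+d)^2)^{-s/2})$ and $\psi_s(b\gamma_i(a))=\Tr_\phi(be^{-\D}a(1+N^2)^{-s/2})$, so their difference is $\Tr_\phi\big(be^{-\D}a[(1+(N+d)^2)^{-s/2}-(1+N^2)^{-s/2}]\big)$. The bracketed operator is $O(m^{-\Re s-1})$ on $E^{\ox m}$ and the coefficients $\Tr_\phi^{(m)}((be^{-\D}a)|_{E^{\ox m}})$ are bounded (indeed eventually constant, by the stabilisation computation), so this series is holomorphic at $s=1$ and its residue vanishes, giving $\phi_\D(ab)=\phi_\D(b\gamma_i(a))$. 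I expect this last step to be the main obstacle: the exactly-KMS weight is only formal since $e^{\D}$ is unbounded on $F_E$, so the entire argument must be carried out at the level of the regularised traces $\psi_s$, and the crux is simultaneously justifying each cyclic rearrangement for $\Re s>1$ and showing that the commutator term forced by $(1+N^2)^{-s/2}$ contributes no residue at $s=1$.
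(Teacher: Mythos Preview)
Your proof is correct and follows essentially the same architecture as the paper's: compute the closed form $\phi_\D(T_\xi T_\eta^*)=\delta_{|\xi|,|\eta|}\phi\big((\eta\mid e^{-\D_{|\eta|}}\xi)_A\big)$ via a stabilisation identity driven by $E$-invariance of $\phi$, read off positivity and normalisation, check vanishing on the covariance ideal, and finally show KMS by isolating a regulator-correction term that is holomorphic at $s=1$.

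There are two mild differences worth noting. First, you prove the key stabilisation identity $\Tr_\phi^{(k)}(L_c\,e^{-\D_k})=\phi(c)$ by induction on $k$, splitting off one tensor leg at a time; the paper instead performs the analogous computation in one pass over a frame for $E^{\ox(n-k)}$ (the string of equalities leading to~\eqref{eq:i-can-count}). These are equivalent, and your inductive phrasing is perhaps more transparent. Second, for the KMS step you exploit the shift relation $(1+N^2)^{-s/2}a=a(1+(N+d)^2)^{-s/2}$ for $a$ of degree $d$ and reduce to estimating the difference $(1+(m+d)^2)^{-s/2}-(1+m^2)^{-s/2}=O(m^{-\Re s-1})$; the paper instead bounds the commutator $[S_2,(1+N^2)^{-s/2}]$ via the integral formula for fractional powers. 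Your route is a little more elementary and avoids the integral estimates, but both hinge on the same mechanism: the graded coefficients $\Tr_\phi^{(m)}(\,\cdot\,)$ are eventually constant (by stabilisation), so the correction series is absolutely convergent for $\Re s>0$ and contributes no residue at $s=1$. Your remark that the cyclic manipulations must be justified level-by-level (since $e^{\pm\D}$ need not be bounded on $F_E$) is exactly the right caveat; the paper's argument implicitly relies on the same level-by-level reasoning.
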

\begin{proof}
Let $\Tr_\phi$ be the functional obtained from~\eqref{eq:trphi} with $M = F_E$, and for
each $k$, let $\Tr_{\phi,k}$ be the functional obtained from~\eqref{eq:trphi} with $M =
E^{\otimes k}$. If $\eta \in E^{\otimes k}$ and $\zeta \in E^{\otimes l}$ with $k \not=
l$, then $\pairR(\eta|\xi)_A = 0$ in $F_E$, and so~\eqref{eq:trphi} gives
$\Tr_\phi(\Theta_{\xi,\eta}) = 0$; and if $\xi,\eta\in E^{\ox k}$ then~\eqref{eq:trphi}
gives $\Tr_\phi(\Theta_{\xi,\eta}) = \Tr_{\phi,k}(\Theta_{\xi,\eta})$.

For each $n$, let $P_n\in \End_A(F_E)$ be the projection onto $E^{\otimes n}$. For
$\xi,\eta \in \bigcup_k E^{\otimes k}$, we have
$$
e^{-\D}T_\xi T_\eta^*(1+N^2)^{-s/2}
=\sum_{n,m=0}^\infty P_ne^{-\D}T_\xi T_\eta^*(1+m^2)^{-s/2}P_m,
$$
and so
$$
\Tr_\phi(e^{-\D}T_\xi T_\eta^*(1+N^2)^{-s/2})
=\sum_{n=0}^\infty \Tr_{\phi,n}(P_ne^{-\D}T_\xi T_\eta^*(1+n^2)^{-s/2}P_n).
$$
Since $\xi,\eta \in E^{\otimes k}$,
$$
\Tr_{\phi,n}(P_ne^{-\D}T_\xi T_\eta^*P_n)=\left\{\begin{array}{ll} 0 & n<k\\
\Tr_{\phi,n}(e^{-\D}\Theta_{\xi,\eta}\ox\Id_{E^{\ox n-k}}) & n\geq k\end{array}\right.
$$
Fix $n \ge k$, let $\{e_\rho\}$ be a frame for $E^{\ox n-k}$, and compute:
\begin{align*}
\Tr_{\phi,n}(e^{-\D}\Theta_{\xi,\eta}\ox\Id_{E^{\ox n-k}})
    &=\sum_{|\rho|=n-k}\Tr_{\phi,n}(e^{-\D}\Theta_{\xi\ox e_\rho,\eta\ox e_\rho})\\
    &=\sum_{|\rho|=n-k}\phi\big(\pairR(\eta\ox e_\rho|e^{-\D}\xi\ox e_\rho)_A\big)\\
    &=\sum_{|\rho|=n-k}\phi\big(\pairR(e_\rho|{\pairR(\eta|e^{-\D}\xi)_A}e^{-\D}e_\rho)_A\big)\\
    &=\sum_{|\rho|=n-k}\phi\big(\pairR(\otimes^{n-k}_{j=1} e^{-\beta/2}e_{\rho_j}|
        {\big(\eta\mid e^{-\D}\xi)_A}\cdot\big(\otimes^{n-k}_{j=1} e^{-\beta/2}e_{\rho_j}\big))_A\big)\\
    &=\sum_{|\rho|=n-k}\phi\big({}_A\big({\pairR(\eta|e^{-\D}\xi)_A}\cdot
        \big(\otimes^{n-k}_{j=1} e^{-\beta/2}e_{\rho_j}\big)\mid
        \otimes^{n-k}_{j=1} e^{-\beta/2}e_{\rho_j}\big)\big)\\
    &=\sum_{|\rho|=n-k}\phi\big(\pairR(\eta|e^{-\D}\xi)_A\,\pairL_A(
        \otimes^{n-k}_{j=1} e^{-\beta/2}e_{\rho_j}|
        \otimes^{n-k}_{j=1} e^{-\beta/2}e_{\rho_j})\big).
\end{align*}
We have $\sum \pairL_A(\otimes^{n-k}_{j=1} e^{-\beta/2}e_{\rho_j} | \otimes^{n-k}_{j=1}
e^{-\beta/2}e_{\rho_j}) = 1_A$ by the calculations of Lemma~\ref{lem:correct-beta1}.
Hence
\[
\Tr_{\phi,n}(e^{-\D}\Theta_{\xi,\eta}\ox\Id_{E^{\ox n-k}}) = \phi((\eta|e^{-\D}\xi)_A).
\]
Hence
$$
\Tr_\phi(e^{-\D}T_\xi T_\eta^*(1+N^2)^{-s/2})
=\sum_{n=k}^\infty \phi(\pairR(\eta|e^{-\D}\xi)_A)(1+n^2)^{-s/2},
$$
and we see that $\phi_\D(T_\xi T_\eta^*) := \res_{s=1}\Tr_\phi(e^{-\D}T_\xi
T_\eta^*(1+N^2)^{-s/2})$ is well-defined, and
\begin{equation}
\phi_\D(T_\xi T_\eta^*) = \phi(\pairR(\eta|e^{-\D}\xi)_A).
\label{eq:i-can-count}
\end{equation}

Fix $a \in A$. By the calculation of Lemma~\ref{lem:correct-beta1}, and centrality of
$\beta$,
\begin{align}
\phi_\D(a)&=\phi_\D(a1_{\Tt_E})\nonumber\\
&=\res_{s=1}\sum_n\Tr_{\phi,n}(e^{-\D}a{\rm Id}_{E^{\otimes n}})(1+n^2)^{-s/2}\nonumber\\
&=\res_{s=1}\sum_n\phi\Big(\sum_{|\rho|=n}(e_\rho|e^{-\D}ae_\rho)_A\Big)(1+n^2)^{-s/2}\nonumber\\
&= \res_{s=1}\sum_n\phi\Big(\sum_{|\rho|=n}
{}_A\Big(a\cdot\big(\otimes^{n}_{j=1} e^{-\beta/2}e_{\rho_j}\big)\mathbin{\Big|} \otimes^{n}_{j=1} e^{-\beta/2}e_{\rho_j})\Big)(1+n^2)^{-s/2}\nonumber\\
&= \res_{s=1}\sum_n\phi(a1_A)(1+n^2)^{-s/2}= \phi(a).
\label{eq:more-counting}
\end{align}

To check that $\phi_\D$ extends to a norm-decreasing linear map on $\Tt_E$,
apply~\eqref{eq:more-counting} to $a = 1_A\in A$ to see that
$\phi_\D(1_{\Tt_E})=\phi(1_A)=1$. Equation~\eqref{eq:i-can-count} shows that the formula
$\sum_i T_{\xi_i} T^*_{\eta_i} \mapsto \sum_i \phi_\D(T_{\xi_i} T^*_{\eta_i})$ carries
positive elements of $\lsp\{T_\xi T^*_\eta : \xi,\eta \in \bigcup_k E^{\otimes k}\}$ to
$[0,\infty)$. Hence, for $T = \sum_i T_{\xi_i} T^*_{\eta_i}$ self-adjoint,
$|\phi_\D(T)|\leq \Vert T\Vert \phi_\D(1_{\Tt_E}) = \|T\|$. So the formula
$\phi_\D(\sum_i T_{\xi_i} T^*_{\eta_i})$ is well-defined and bounded, so extends to a
bounded linear functional on $\Tt_E$ satisfying $\phi_\D(1) = 1$; that is, a state.

A calculation like~\eqref{eq:i-can-count} shows that
$\phi_\D\big(a\sum_{j=1}^N\Theta_{e_j,e_j}\big) = \phi(a)$ as well. So $\phi_\D$ vanishes
on the covariance ideal, and hence descends to a state on $\Oo_E$.

We check the KMS condition. For $S_1,\,S_2 \in \clsp\{S_\xi S_\eta^* : \xi,\eta \in
\bigcup_k E^{\otimes k}\}$, we have
$$
\Tr_\phi(e^{-\D}S_1S_2(1+N^2)^{-s/2})=\Tr_\phi(e^{-\D}(e^{\D}S_2e^{-\D})S_1(1+N^2)^{-s/2}+
e^{-\D}S_1[S_2,(1+N^2)^{-s/2}]).
$$
Now we show that the commutator $[S_2,(1+N^2)^{-s/2}]$ is `trace-class'. For $0<s<2$
we employ the integral formula for fractional powers to find
\begin{align*}
[S_2,(1+N^2)^{-s/2}]
&=\frac{\sin(s\pi/2)}{\pi}\int_0^\infty
\lambda^{-s/2}(1+\lambda+N^2)^{-1}[N^2,S_2](1+\lambda+N^2)^{-1}d\lambda.
\end{align*}
We claim that the integral on the right converges in norm for all $2>s>0$, proving finiteness
of the sum defining $\Tr_\phi(e^{-\D}S_1[S_2,(1+N^2)^{-s/2}])$ at $s=1$.
To see this, observe that gauge invariance says that we
need only consider the case when $S_1S_2$
is homogenous of degree $0$ for the
gauge action. If $S_2$ is of degree zero, then there is nothing to
prove. For $S_2$ homogenous of degree
$m$ we have
\begin{align*}
\Tr_\phi&(e^{-\D}S_1[S_2,(1+N^2)^{-s/2}])
  = \sum_{n=0}^\infty\Tr_{\phi,n}( P_ne^{-\D}S_1[S_2,(1+N^2)^{-s/2}]P_n)\\
    &=\sum_{n=0}^\infty \Tr_{\phi,n}\Big(P_ne^{-\D}S_1\frac{\sin(s\pi/2)}{\pi}\\
   &\hskip6em{}  \times \int_0^\infty \lambda^{-s/2}
      (1+\lambda+N^2)^{-1}(N[N,S_2]+[N,S_2]N)(1+\lambda+N^2)^{-1}P_n\,d\lambda\Big)\\
   &=m\sum_{n=0}^\infty
        \Tr_\phi\bigg(P_ne^{-\D}S_1S_2P_n\bigg)\frac{\sin(s\pi/2)}{\pi}f_n(s)
\end{align*}
where
$$
f_n(s)\!=\!\!\int_0^\infty\!\!\!\!\!\lambda^{-s/2}
\Big((1+\lambda+(n+m)^2)^{-1}(n+m)(1+\lambda+n^2)^{-1}+ (1+\lambda+(n+m)^2)^{-1}n(1+\lambda+n^2)^{-1}\Big)\,d\lambda.
$$
Now observe that
$\Big| \Tr_\phi\Big(P_ne^{-\D}S_1S_2P_n\Big)\Big|<\Vert S_1S_2\Vert$ for all $n$, and use
the elementary estimates
$$
(1+\lambda +n^2)^{-1}n\leq \frac{1}{\sqrt{2}}(1+\lambda)^{-1/2},\quad
(1+\lambda +n^2)^{-1}\leq(1/2+\lambda)^{-\epsilon} (1/2+n^2)^{-(1-\epsilon)}\ \mbox{for}\ 1>\epsilon>0
$$
to see that
$$
f_n(s)\leq \frac{1}{\sqrt{2}}\Big((1/2+n^2)^{-(1-\epsilon)} +(1/2+(n+m)^2)^{-(1-\epsilon)}\Big)
\int_0^\infty \lambda^{-s/2}(1+\lambda)^{-1/2}(1/2+\lambda)^{-\epsilon}\,d\lambda.
$$
These estimates, along with a suitable choice of $\epsilon$,
show that $\Tr_\phi(e^{-\D}S_1[S_2,(1+N^2)^{-s/2}])$ is finite for all $2>s>0$, and
more generally for $2>\Re(s)>0$.

Taking the derivative with respect to
$s$ of the function $s\mapsto \Tr_\phi(e^{-\D}S_1[S_2,(1+N^2)^{-s/2}])$ and repeating the estimate
(now with an extra $\log(\lambda)$ in the integral defining $f_n(s)$)
shows that the derivative is also finite at $s=1$, proving holomorphicity. Hence the residue at $s=1$ of
$s\mapsto \Tr_\phi(e^{-\D}S_1[S_2,(1+N^2)^{-s/2}])$ vanishes and $\phi_\D$ is KMS.
\end{proof}
\begin{rmk}
It is tempting to  consider  a dynamics on $\Oo_E$ coming from the
unitary group $W_t=\oplus_{k\geq 0} e^{i\beta_k t}$, where
$\beta_k=\log(\Phi_k(\Id_{E^{\ox k}}))$. There is a dynamics $\sigma$ on $\End_A(F_E)$
given by $\sigma_t(T) = W_t T W^*_t$, and it is natural to ask whether $\sigma$ restricts
to a dynamics on $\Tt_E$.

Observe, however, that since $e^{\beta_0} = 1_A$ and $e^{\beta_1} = e^\beta$, the
dynamics $\sigma$ agrees on the generators of $\Tt_E$ with the dynamics defined in
Lemma~\ref{lem:dynamics}. So if $\sigma$ does indeed extend to $\Tt_E$, then it agrees
with $\gamma$, and the analysis above applies. That is, there is nothing new to be gained
by considering the dynamics $\sigma$, at least for the algebra $\Tt_E$.
\end{rmk}

Proposition~\ref{prp:KMS} combined with the results of \cite{CNNR} yields the following.

\begin{corl}
Let $E$ be a bi-Hilbertian module over $A$ and $\phi:A\to\C$ an $E$-invariant state. Let
$\H=L^2(\Oo_E,\phi_\D)$ be the GNS space of $\phi_\D:\Oo_E\to\C$, and
$\mathcal{N}\subset\B(\H)$ the weak closure of the algebra generated by $\Oo_E$ and the
spectral projections $P_k$ for the unitary extension of the dynamics $\gamma$ to $\H$.
Then $(\Oo_E,\H,\D,\mathcal{N},\phi_\D)$ is a modular spectral triple.
\end{corl}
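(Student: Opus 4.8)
The plan is to deduce the statement from the general construction of modular spectral triples in \cite{CNNR}, by checking that the dynamics $\gamma$, the KMS$_1$ state $\phi_\D$ of Proposition~\ref{prp:KMS}, and the generator $\D$ assemble into the required data. First I would form the GNS triple $(\H,\pi,\xi_{\phi_\D})$ for $(\Oo_E,\phi_\D)$. Since $\phi_\D$ is a $\gamma$-KMS$_1$ state, the dynamics is unitarily implemented on $\H$, its generator is a self-adjoint operator on $\H$ which we continue to denote $\D$, and the modular automorphism group $\sigma^{\phi_\D}$ coincides with $t\mapsto\gamma_{-t}$; thus $\D$ generates the modular group and in particular commutes with the modular operator. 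The projections $P_k$ are the projections onto the homogeneous degree-$k$ subspaces of $\H$ (equivalently the spectral projections of the number operator $N$); they commute with $\D$, and adjoining them to $\pi(\Oo_E)$ to form $\mathcal{N}$ is precisely what renders the modular action suitably inner.

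With $\mathcal{N}$ so defined, the next step is to produce the faithful normal semifinite trace with respect to which the triple is semifinite. Here the key input is that an $E$-invariant state $\phi$ is a trace on $A$ (by the lemma preceding Proposition~\ref{prp:KMS}): following \cite{CNNR}, the dual weight of $\phi_\D$ under the $\T$-grading implemented by the $P_k$ restricts on the fixed-point data to this trace and extends to a faithful normal semifinite trace $\tau$ on $\mathcal{N}$. It then remains to record that $\D$ is affiliated to $\mathcal{N}$ and has bounded commutators with a dense subalgebra: since $\D$ is block diagonal for the grading while a generator $S_\mu S_\nu^*$ shifts degree by $|\mu|-|\nu|$, on each homogeneous block $[\D,\pi(S_\mu S_\nu^*)]$ is implemented by differences of the bounded central elements built from $\beta$ (as computed in Lemma~\ref{lem:correct-beta1}), so the commutator extends to a bounded operator on $\lsp\{S_\mu S_\nu^* : \mu,\nu\in\bigcup_n E^{\ox n}\}$.

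The final and substantive ingredient is the modular compactness condition, and this is exactly the content of Proposition~\ref{prp:KMS}: the computation there shows that for $\Re(s)>1$ the operator $e^{-\D}\pi(a)(1+N^2)^{-s/2}$ is $\Tr_\phi$-trace-class and that $s\mapsto\Tr_\phi(e^{-\D}\pi(a)(1+N^2)^{-s/2})$ has a finite residue at $s=1$. Because $\beta$ is positive and invertible, $\D$ restricted to $E^{\ox n}$ is bounded above and below by scalar multiples of $n$, so $\D$ and $N$ are comparable and this estimate is equivalent to the $\tau$-trace-class property of $(1+\D^2)^{-s/2}$ for $\Re(s)>1$; hence $(1+\D^2)^{-1/2}$ is $\tau$-compact with $(1,\infty)$-summability. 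Feeding these verifications into the theorem of \cite{CNNR} yields that $(\Oo_E,\H,\D,\mathcal{N},\phi_\D)$ is a modular spectral triple. I expect the main obstacle to be the construction of the semifinite trace $\tau$ and the verification that $\D$ is genuinely affiliated to $\mathcal{N}$ with $\tau$-discrete resolvent, since this is where the traciality of $\phi$, the role of the adjoined projections $P_k$, and the summability residue of Proposition~\ref{prp:KMS} must all be reconciled; the analytic heart, namely the trace-class estimate itself, is however already supplied by Proposition~\ref{prp:KMS}.
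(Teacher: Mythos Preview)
The paper gives no proof at all for this corollary: the sentence ``Proposition~\ref{prp:KMS} combined with the results of \cite{CNNR} yields the following'' is the entire justification. Your proposal is precisely an unpacking of that sentence---you verify the hypotheses of the \cite{CNNR} construction using the KMS$_1$ state and the trace-class/residue computations of Proposition~\ref{prp:KMS}---so the approach is the same, just made explicit.
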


\end{document}